\newtheorem{definition}{Definition}[section]
\newtheorem{theorem}{Theorem}[section]
\newtheorem{lemma}{Lemma}[section]
\newtheorem{proposition}{Proposition}[section]
\newtheorem{remark}{Remark}[section]
\newtheorem{example}{Example}[section]
\newtheorem{assumption}{Assumption}[section]
\newtheorem{problem}{Problem}[section]
\begin{document}
\title{{Linear-Quadratic Graphon Mean Field Games with Common Noise}\thanks{De-xuan Xu, Zhun Gou, and Nan-jing Huang: The work of these authors was supported by the National Natural Science Foundation of China (12171339).}}
\author{{De-xuan Xu$^a$, Zhun Gou$^b$, Nan-jing Huang$^a$\footnote{Corresponding author,  E-mail: nanjinghuang@hotmail.com; njhuang@scu.edu.cn} and Shuang Gao$^c$ }\\
{\scriptsize\it a. Department of Mathematics, Sichuan University, Chengdu,
Sichuan 610064, P.R. China}\\
{\scriptsize\it b. School of Mathematics and Statistics, Chongqing Technology and Business University, Chongqing 400067, P.R. China}\\
{\scriptsize\it c. Department of Electrical Engineering, Polytechnique Montreal-University of Montreal, Montreal, Canada}}
\date{}
\maketitle
\vspace*{-9mm}
\begin{center}
\begin{minipage}{5.8in}
{\bf Abstract.}
This paper studies linear quadratic graphon mean field games (LQ-GMFGs) with common noise, in which a large number of agents are coupled via a weighted undirected graph. One special feature, compared with the well-studied graphon mean field games, is that the states of agents are described by the dynamic systems with the idiosyncratic noises and common noise. The limit LQ-GMFGs with common noise are formulated based on the assumption that these graphs lie in a sequence converging to a limit graphon. By applying the spectral decomposition method, the existence of Nash equilibrium for the formulated limit LQ-GMFGs is derived. Moreover, based on the adequate convergence assumptions, a set of $\epsilon$-Nash equilibrium strategies for the finite large population problem is constructed. Finally, an application is given for network security to illustrate our theoretical results.
\\ \ \\
{\bf Keywords:} Graphon mean field game; large population; common noise; $\epsilon$-Nash equilibrium.
\\ \ \\
{\bf 2020 Mathematics Subject Classification:} 91A15; 91A16; 05C57
\\ \ \\
\end{minipage}
\end{center}
\section{Introduction}

The stochastic large population game problems have attracted considerable attention due to their wide applicability in a variety of areas, such as engineering, economics, finance and management science. For the large population with homogeneous interactions, Huang, Caines, and Malham\'{e} \cite{Huang,Huang1} and Lasry and Lions \cite{Lions} introduced the theory of mean field games (MFGs) independently, which has been extensively researched and developed rapidly in recent decades (see for instance \cite{Carmona,Carmona1,Carmona2,Bensoussan,Ma,Xu,Delarue17}).

However,  despite a few MFG models incorporating heterogeneity in individual characteristics \cite{Delarue17}, the MFG theory remains largely confined to games with homogeneous interactions. The complexity of the underlying network couplings makes such problems challenging and even intractable by standard methods. To characterize large graphs and to analyze the convergence of graph sequences to their limits, the graphon theory was established in \cite{Lovasz,Lovasz1,Borgs,Borgs1}. Gao and Caines \cite{Gao1,Gao2,Gao3,Gao4} proposed the theory of the graphon control to obtain the approximate optimal control for the complex and large size network systems, by applying the graphon theory and the infinite dimensional analysis theory.
Recently, based on the graphon theory, the strategic decision problems with heterogeneous interactions have been widely researched. Graphon static games were studied in \cite{Carmona3,Parise}. Caines and Huang proposed graphon mean field game (GMFG) theory in \cite{Caines1,Caines2,Caines} in which GMFG and the GMFG equations have been formulated for the analysis of dynamic games on a large non-uniform network, and the unique existence of solution for the GMFG equations was obtained.  We refer to \cite{Gao,Gao5} for linear quadratic GMFGs with deterministic and stochastic dynamics respectively, in which the analyses are both based on spectral decompositions. Aurell et al. \cite{Aurell} discussed a class of linear quadratic GMFGs set in a Fubini extension of a product probability space to overcome the joint measurability problem of the agent state trajectories with respect to labels and samples. Tchuendom et al. \cite{Foguen-Tchuendom} studied linear quadratic stochastic GMFGs with infinite horizon and proposed a sufficient and necessary condition under which a particular node in the network is associated with minimal equilibrium cost. Amini et al. \cite{Amini} studied stochastic GMFGs with jumps induced by a Poisson random measure. For more inspiring elaboration on graphon theory and graphon mean field games, one can refer to \cite{Bayraktar,Bayraktar1,Amini1,Aurell1}.

On the other hand, in addition to endogenous noise, individual agents may also be affected by exogenous noise, named ``common noise'', in many real situations. As pointed out by Bensoussan et al. \cite{Bensoussan1},  the common noise plays a significant role in modelling the dynamic behaviors of agents when taking into uncertainty features in games. In fact, the common noise can be interpreted as some exogenous factors, such as interest rate, exchange rate, price of raw materials and public policy of the government, which can affect all participants in the market with large population. There exist some works about mean field games with common noise. Carmona et al. \cite{Carmona4} developed the theory of existence and uniqueness for general stochastic differential mean field games with common noise. Tchuendom \cite{Foguen-Tchuendom1} showed that a common noise can restore the uniqueness property of Nash equilibrium in a class of mean field games. Other important researches addressing the mean field games with common noise can be found in the literature \cite{Carmona5,Bensoussan1,Graber,Wang,Hua}. Recently, for the large population with heterogeneous agents, Dunyak and Caines \cite{Dunyak} studied graphon mean field games in discrete time with Q-noise which is a generalized version of common noise, and Tangpi and Zhou \cite{Tangpi} discussed a graphon utility maximisation game with common noise. It is worth noting that in \cite{Tangpi}, the state equations do not contain the graphon mean field term, and the cost functions only consider the terminal cost. To our best knowledge, there is no other work to study graphon mean field games with common noise in continuous time.

The present paper is devoted to establish an $\epsilon$-Nash equilibrium for the finite large population game descibed by a class of LQ-GMFGs with common noise in continuous time, in which the states of all agents on a large non-uniform network are governed by the dynamic systems with the idiosyncratic noises and common noise.  In the case without common noise, such a problem has been considered in \cite{Gao5}. It should be noticed that our model can be used to capture the heterogeneity of all agents' interactions which can not be characterized by the model of \cite{Foguen-Tchuendom1}.  We would like to mention that the LQ-GMFGs with common noise provide a powerful tool for solving some real problems such as the strategic decision problems for a large number of competitive firms or agents affected by a common noise on a non-uniform network (see Section 5 for more details).

In the present paper, we would like to establish an $\epsilon$-Nash equilibrium for the finite large population game under some mild assumptions. Clearly, the methods used in those aforementioned papers do not apply here in a straightforward manner. Instead, one needs to carefully treat the conditional expectations of stochastic processes with three variables and the strong sectional information of the difference between step function type graphon corresponding to the finite graph and the limit graphon. By utilising the spectral decomposition method, Gronwall's inequality and the BDG inequality, we are able to establish an $\epsilon$-Nash equilibrium for the finite large population game.
The main contributions of this paper are threefold. Firstly, we consider a limit problem and obtain a Nash equilibrium solution depending on a set of forward backward stochastic differential equations (FBSDEs) (see Equation \eqref{FBSDEl}).  Secondly, we construct a set of $\epsilon$-Nash equilibrium strategies for the finite population problem from the solutions of the limit LQ-GMFGs, based on the convergence of the finite graphs to the limit graphon.
Finally, we apply the theoretical results to network security and obtain an approximate Nash equilibrium.

The organization of this paper is as follows. Section 2 is devoted to the background for graphons and other necessary
notations. In Section 3, we introduce the finite population problems and the corresponding limit graphon mean field game problems. Moreover, we establish the existence of Nash equilibrium for the limit situation. Section 4 studies the $\epsilon$-Nash equilibrium property for the strategies constructed from the solutions of the limit LQ-GMFG problems. In Section 5 we give an application to network security, before we summarize the results in Section 6.

To facilitate the calculation, we only consider the one-dimensional case in this work. The extension for the multi-dimensional case is straightforward.

\hspace*{\fill}\\
\noindent
\emph{Notation}: For $I=[0,1]$, the measure space over $I$ is denoted by $(I,\mathcal{B}_I,\lambda_I)$, where $B_I$ is the Borel $\sigma$-algebra and $\lambda_I$ is the Lebesgue measure. $L^p[0,1]$ $(p=1,2)$ denotes the space of equivalence classes of functions $\phi:\ [0,1]\rightarrow \mathbb{R}$ under the norm defined by $\|\phi\|_p=\left(\int_0^1|\phi(\alpha)|^p d\alpha\right)^{\nicefrac{1}{p}}$. The inner product in $L^2[0,1]$ is defined as follows: for $\phi,\varphi\in L^2[0,1]$, $\langle \phi,\varphi \rangle=\int_0^1\phi(\alpha)\varphi(\alpha)d\alpha$. The function $\mathds{1}\in L^2[0,1]$ is defined as follows: $\mathds{1}(\alpha)\triangleq 1$, for all $\alpha\in[0,1]$. $\|\cdot\|_\infty$ denotes the standard infinity norm for vectors and matrices.

\section{Preliminaries}
\qquad We first recall the concepts of graphs, graphons and graphon operators used in \cite{Gao4,Gao5,Lovasz}.

A graph $G=(V,E)$ is represented by a node set $V=\{1,\ldots,N\}$ and an edge set $E\subset V\times V$. The corresponding adjacency matrix is defined as $M_N=[m_{ij}]$, where $m_{ij}\in[-1,1]$ denotes the weight between nodes $i$ and $j$, so as to include graphs with possibly negative weights. A graph is undirected if its adjacency matrix is symmetric.

Generally, a graphon is defined as a symmetric measurable function from $[0,1]^2$ to $[0,1]$. We note that in some works, for instance \cite{Gao4}, the term ``graphon'' refers to a symmetric measurable function $M:[0,1]^2\rightarrow[-1,1]$. In this paper, we adopt the same definition of graphon in \cite{Gao4}. Let $\mathcal{W}_1$ denote the set of all graphons. The cut norm of a graphon is defined as follows:
$$
\|M\|_{\Box}=\sup\limits_{S,T\in\mathcal{B}_I}\left|\int\limits_{S\times T}M(x,y)dxdy\right|,
$$
which will be used in the studies for convergence of the sequence of graphons in Appendix. It's noted that for functions defined on $[0,1]$ or $[0,1]^2$, it usually doesn't matter whether we use Borel or Lebesgue $\sigma$-algebra \cite{Janson}. In this paper, we assume $[0,1]^2$ are equipped with the product $\sigma$-algebra $\mathcal{B}_I\times \mathcal{B}_I$ and the product measure $\lambda_I\times\lambda_I$, which is appeared in \cite{Janson}.

Specially, for the $N$-uniform partition $\{P_1,\ldots,P_N\}$ of $[0,1]$: $P_l=[\frac{l-1}{N},\frac{l}{N})$ for $1\leq l\leq N-1$ and $P_N=[\frac{N-1}{N},1]$, the step function type graphon $M^{[N]}$ corresponding to $M_N$ is given by
$$
M^{[N]}(\alpha, \beta)=\sum\limits_{q=1}^N \sum\limits_{l=1}^N \mathds{1}_{P_q}(\alpha)\mathds{1}_{P_l}(\beta)m_{ql},\qquad \forall (\alpha,\beta)\in [0, 1]^2,
$$
where $\mathds{1}_{P_q}(\cdot)$ is the indicator function.

A graphon operator $M:L^2[0,1]\to L^2[0,1]$ is defined as follows
\begin{equation}\label{graphonoperator}
(M\varphi)(\alpha)=\int_{[0,1]}M(\alpha,\beta)\varphi(\beta)d\beta,\quad \forall\varphi\in L^2[0,1],
\end{equation}
where $M(\cdot,\cdot)$ is a graphon. The graphon operator $M$ is self-adjoint and compact \cite{Lovasz}.

Let $(\Omega, \mathcal{F}, \mathcal{F}_t, \mathbb{P})$ be the complete probability space satisfying the usual hypothesis with $t\in[0,T]$. We denote by $L_{\mathcal{F}}^2(\Omega;L^2([0,T];\mathbb{R}))$ the set of all $\{\mathcal{F}_t\}_{t\in[0,T]}$-progressively measurable processes $X(\cdot)$ taking values in $\mathbb{R}$, such that
$$
E\int_0^T|X_t|^2dt<\infty.
$$
The notation $L_{\mathcal{F}}^2(0,T;\mathbb{R})$ is used for simplicity, when there is no confusion. Moreover, we denote by $L_{\mathcal{F}}^2(\Omega;C([0,T],\mathbb{R}))$ the set of all continuous, $\{\mathcal{F}_t\}_{t\in[0,T]}$-progressively measurable processes $X(\cdot)$ taking values in $\mathbb{R}$, such that
$$
E\sup_{0\leq t\leq T}|X_t|^2<\infty.
$$

\section{LQ-GMFGs with common noise}
\qquad In this section, we introduce the model of linear quadratic graphon mean field game with common noise on a weighted undirected graph, and obtain the existence for solution of the limit problem by applying spectral decomposition method.

\subsection{Finite population problems}

\qquad  Graphon mean field games, distributed over a weighted undirected graph with $N$-node represented by its adjacency matrix $M_N=[m_{ql}]$, are asymptotic versions of finite large population games. Each node is associated with a homogeneous group of agents, in which each agent is influenced by the state average across the its nodal population and the state averages across other nodal populations. Let $\mathcal{V}_c$ be the set of nodes,  $\mathcal{C}_l$  the population in the $l$th node, and $K=\sum_{l=1}^N|\mathcal{C}_l|$  the number of all individual agents.

Consider a finite horizon $[0, T]$ for a fixed $T>0$. Assume that $(\Omega, \mathcal{F}, \mathbb{P})$ is a complete probability space, in which a standard $(K+1)$-dimensional Brownian motion $\{W^0_t, w^i_t,\ 1\leq i\leq K\}$ is defined. Specially, $W^0$ is a common noise (i.e., the same $W^0$ for all agents) and $w^i$ is an individual noise for the agent $\mathcal{A}_i$ ($1\le i\le K$). Let $\mathcal{F}_t^{W^0}\triangleq\sigma\{W_s^0, 0\leq s\leq t\}\bigvee\mathcal{N}$ and $\mathcal{F}_t\triangleq\sigma\{ W_s^0, w_s^i, x_0^i;0\leq s\leq t, 1\leq i\leq K\}\bigvee\mathcal{N}$, where $\mathcal{N}$ is the set of all $P$-null sets and $x_0^i$ is the initial state of the agent $\mathcal{A}_i$.

For each $i\in\{1,\ldots,K\}$, the admissible control set $\mathcal{U}$ of agent $\mathcal{A}_i$  is defined to be a collection of $\{\mathcal{F}_t\}_{t\in[0,T]}$-progressively measurable process with $E\int_0^T|u_s|^2ds<\infty$ and the state of $\mathcal{A}_i$ is described according to the following dynamic
\begin{equation}\label{1}
  dx_t^i=(Ax_t^i+Bu_t^i+Dz_t^i)dt+\Sigma dw_t^i+\Sigma_0dW_t^0,
\end{equation}
where $A, B, D, \Sigma$ and $\Sigma_0$ are given constants, $x_t^i$, $u_t^i$ and $z_t^i$ are respectively the state, the control and the network state average. For each $\mathcal{A}_i$ in $\mathcal{C}_q$, $x_0^i$ is identically distributed with $|Ex_0^i|\leq C_\mu$ and Var$(x_0^i)\leq C_\sigma$, where $C_\mu$ and $C_\sigma$ are independent of $q$. For each $\mathcal{A}_i$ in $\mathcal{C}_q$, denote $\mu_q\triangleq Ex_0^i$. Assume that the initial states $\{x_0^i,1\leq i\leq K\}$ are independent and are also independent of Brownian motion $\{W^0, w^1,\ldots,w^K\}$. For each agent $\mathcal{A}_i$ in $\mathcal{C}_q$, $q\in\{1,\ldots,N\}$, the network state average is given by
\begin{equation}\label{meanfielditem}
  z_t^i=\frac{1}{N}\sum\limits_{l=1}^N m_{ql}\frac{1}{|\mathcal{C}_l|}\sum\limits_{j\in\mathcal{C}_l}x_t^j.
\end{equation}

For any given strategy $u^i\in\mathcal{U}$, it is easy to show that the state dynamic (\ref{1}) has a unique solution $x^i\in L_\mathcal{F}^2 (\Omega;C([0,T];\mathbb{R}))$, $i=1,\ldots,K$. In fact, let $X\triangleq(x^1,\ldots,x^K)^T$, $U\triangleq(u^1,\ldots,u^K)^T$, $W\triangleq(W^0,w^1,\ldots,w^K)^T$, $\mathbb{F}\triangleq [m_{ql}\frac{1}{|\mathcal{C}_l|}\mathbb{I}_{|\mathcal{C}_q|\times|\mathcal{C}_l|}]$ with
$$
\mathbb{I}_{|\mathcal{C}_q|\times|\mathcal{C}_l|}\triangleq
\begin{pmatrix}
1      & \cdots & 1     \\
\vdots & \ddots & \vdots\\
1      & \cdots & 1
\end{pmatrix},
\quad \mbox{and} \quad
\widetilde{\Sigma}\triangleq
\begin{pmatrix}
\Sigma_0 & \Sigma & &    \\
\vdots & & \ddots & \\
\Sigma_0 & & & \Sigma
\end{pmatrix}.
$$
Then we can rewrite the state dynamic (\ref{1}) as the following vector-valued SDE
$$
dX_t=\left[AX_t+BU_t+\frac{D}{N}\mathbb{F}X_t\right]dt+\widetilde{\Sigma}dW_t,
$$
which has a unique solution $X\in L_\mathcal{F}^2 (\Omega;C([0,T];\mathbb{R}^K))$ by the well known result for standard SDE (see \cite{lvqi}).

Denote by $u=(u^1,\ldots,u^K)$ the strategies of all $K$ agents, and $u^{-i}=(u^1,\ldots,u^{i-1},u^{i+1},\ldots,u^K)$ the strategies except the agent $\mathcal{A}_i$, $i=1,\dots,K$. The cost function of the individual agent $\mathcal{A}_i$ is defined by
\begin{equation}\label{2}
  J_i(u^i,u^{-i})=E\left\{\int_0^T\left[Q(x_t^i-\nu_t^i)^2+R(u_t^i)^2\right]dt+Q_T(x_T^i-\nu_T^i)^2\right\},\quad i=1,\dots,K,
\end{equation}
where $Q,Q_T\geq 0,\ R>0$, and $\nu_t^i\triangleq H(z_t^i+\eta)$ with $H,\eta\in \mathbb{R}$.

Suppose that the objective of each individual agent is to minimize her own cost function by properly controlling her own state dynamics. Then, we formulate the following dynamic optimization problem of the large population system.
\begin{problem}\label{Knash}
Find $u^i\in\mathcal{U}$, $i=1,\ldots,K$, such that
$$
J_i(u^i,u^{-i})\leq J_i(v^i,u^{-i}),\quad i=1,\dots,K
$$
holds for any admissible control $v^i\in\mathcal{U}$. The strategy $K$-tuple $u=(u^1,\ldots,u^K)$ is called a Nash equilibrium for the $K$-player game where each individual agent is minimizing the cost in (\ref{2}) subject to the SDE dynamics (\ref{1}).
\end{problem}

Similar to \cite{Carmona5}, directly finding Nash equilibria for Problem \ref{Knash} is feasible. However, the computational complexity of calibration of a Nash equilibrium (if it exists) is high, especially for large values of number of agents \cite{Bensoussan}. Thus, a convenient approach to obtain an approximation of the Nash equilibrium strategy is usually demanded. In the following sections, we first consider the Nash equilibrium of the limit graphon mean-field game, and then construct an asymptotic Nash equilibrium for Problem \ref{Knash} from the corresponding limit GMFG solutions. It is worth noting that, under a finite-rank assumption (see below) on the limit graphon, we only need to solve $d$  forward-backward differential equations ($d\ll N$) to obtain the Nash equilibrium for the limit GMFG with a continuum of agents. This approach allows us to derive the asymptotic Nash equilibrium for Problem \ref{Knash}, and significantly reduces the computational complexity compared to direct computation.

\subsection{Limit graphon mean field game problems}
\qquad The graphon mean field games approach provided in \cite{Caines} employs the idea of finding approximate Nash equilibrium based on the double limits $N\rightarrow\infty$ and $\min_{1\leq l\leq N}|\mathcal{C}_l|\rightarrow\infty$.

Based on the works \cite{Caines,Gao5,Foguen-Tchuendom}, the graphon mean field games with common noise can be stated as follows. Let $[0,1]$ be the index set of nodes $\alpha$ in the limit network. With the network interaction within a cluster being uniform, assume that for each node $\alpha\in[0,1]$, there exists a representative (or generic) agent, denoted $\mathcal{A}_\alpha$ whose state's dynamic is given by
\begin{equation}\label{xalpha}
  dx_t^\alpha=(Ax_t^\alpha+Bu_t^\alpha+Dz_t^\alpha)dt+\Sigma dw_t^\alpha+\Sigma_0dW_t^0,
\end{equation}
where $A, B, D, \Sigma$ and $\Sigma_0$ are the same as in \eqref{1}, $w_t^\alpha$ and $W_t^0$ are standard Brownian motions, $x_0^\alpha$, $w^\alpha$ and $W^0$ are mutually independent for each $\alpha\in[0,1]$, $|Ex_0^\alpha|\leq C_\mu$ and $\mbox{Var}(x_0^\alpha)\leq C_\sigma$ for each $\alpha \in [0,1]$,
and the graphon mean field, denoted by $z_t^\alpha$, is given by
$$
z_t^\alpha=\int_0^1M(\alpha,\beta)E[x_t^\beta|\mathcal{F}_T^{W^0}]d\beta,
$$
where $M$ is a given graphon on $[0,1]^2$.

Let $\mathcal{F}_t^\alpha\triangleq\sigma\{W_s^0, w_s^\alpha, x_0^\alpha;0\leq s\leq t\}\bigvee\mathcal{N}$. The admissible strategy set $\mathcal{U}^\alpha$ of agent $\mathcal{A}_\alpha$ is defined to be a collection of $(\mathcal{F}_t^\alpha)_{t\in[0,T]}$-progressively measurable process such that $E\int_0^T|u_s|^2ds<\infty$. A representative agent $\mathcal{A}_\alpha$ aims to minimize cost function given by
\begin{equation}\label{Jalpha}
  J(u^\alpha,z^\alpha)=E\left\{\int_0^T\left[Q(x_t^\alpha-\nu_t^\alpha)^2+R(u_t^\alpha)^2\right]dt
  +Q_T(x_T^\alpha-\nu_T^\alpha)^2\right\}, \quad \alpha \in [0,1],
\end{equation}
where $\nu^\alpha\triangleq H(z^\alpha+\eta)$.

In this section, for some families of stochastic processes, their corresponding indices are required to satisfy some measurability and integrability conditions. Such families can also be regarded as three-variables functions of index, time and sample. We now define two sets of such functions (families of stochastic processes).

We denote by $\mathcal{L}_c$ the set of functions $\phi:\ [0,1]\times[0,T]\times\Omega\rightarrow\mathbb{R}$, such that (i) for a.s. $\omega\in\Omega$, each $t\in[0,T]$,\quad $\phi(\cdot,t,\omega)\in L^1[0,1]$; (ii)
 for each $\alpha\in[0,1]$,\quad $\phi(\alpha,\cdot,\cdot)\in L_{\mathcal{F}^{W^0}}^2(\Omega;C([0,T];\mathbb{R}))$; (iii) for a.s. $\omega\in\Omega$, $\phi(\cdot,\cdot,\omega)$ is continuous in $[0,T]$ and measurable in $[0,1]$; (iv)
 for a.s. $\omega\in\Omega$,\quad $\int_0^1\int_0^T|\phi(\alpha,t,\omega)|dtd\alpha<\infty$, and for a.s. $\omega\in\Omega$, each $\alpha\in[0,1]$,\quad $\int_0^T|\phi(\alpha,t,\omega)|dt<\infty$.
Moreover, we denote by $\mathcal{L}_L$ the set of functions $\phi:\ [0,1]\times[0,T]\times\Omega\rightarrow\mathbb{R}$, such that for a.e. $\alpha\in[0,1]$,\quad $\phi(\alpha,\cdot,\cdot)\in L_{\mathcal{F}^{W^0}}^2(0,T;\mathbb{R})$.

Hereafter, for a function $\psi: [0,1]\rightarrow \mathbb{R}$, we will often use the notation $\psi^{\alpha}=\psi(\alpha)$ for all $\alpha \in [0,1]$. Meanwhile, for a function $\phi:\ [0,1]\times[0,T]\times\Omega\rightarrow\mathbb{R}$, we will often denote $\phi_t^\alpha=\phi(\alpha,t,\omega)$ for all $\alpha\in[0,1]$, $t\in[0,T]$ and $\omega\in\Omega$, or just use $\phi$ to denote $\phi(\cdot,\cdot,\cdot)$ when no confusion is possible.

The LQ-GMFG with common noise associated with (\ref{xalpha}) and (\ref{Jalpha}) is characterized as follows:
\begin{itemize}
\item[1)] (graphon mean field inputs) Fix a three-variables function $z\in \mathcal{L}_c$.
\item[2)] (control problems) Solve the following stochastic optimal control problem. For a.e. $\alpha\in[0,1]$, find optimal control $u^{\alpha,o}$ such that
    \begin{equation}\label{Jalphaoptimal}
  J(u^{\alpha,o},z^\alpha)=\inf\limits_{u^\alpha\in\mathcal{U}^\alpha}J(u^\alpha,z^\alpha)=\inf\limits_{u^\alpha\in\mathcal{U}^\alpha}E\left\{\int_0^T\left[Q(x_t^\alpha-\nu_t^\alpha)^2+R(u_t^\alpha)^2\right]dt+Q_T(x_T^\alpha-\nu_T^\alpha)^2\right\}
\end{equation}
subject to
\begin{equation}\label{xalphaoptimal}
  dx_t^\alpha=(Ax_t^\alpha+Bu_t^\alpha+Dz_t^\alpha)dt+\Sigma dw_t^\alpha+\Sigma_0dW_t^0.
\end{equation}
\item[3)](consistency conditions) Find $z$ such that, for a.e. $\alpha\in[0,1]$, a.s. $\omega\in\Omega$, each $t\in [0,T]$,
\begin{equation}\label{consistency}
  z_t^\alpha=\int_0^1M(\alpha,\beta)\varphi_t^\beta d\beta,
\end{equation}
where for a.e. $\beta\in[0,1]$, each $t\in [0,T]$, $\varphi_t^\beta$ is one of the conditional expectation of $x_t^{\beta,o}$ given $\mathcal{F}_T^{W^0}$, i.e. $E[x_t^{\beta,o}|\mathcal{F}_T^{W^0}]=\varphi_t^\beta$, $\mathbb{P}$ a.s..
\end{itemize}

\begin{remark}
Inspired by \cite{Foguen-Tchuendom1}, it can be shown that for all $t\in[0,T]$, $E[x_t^{\beta,o}|\mathcal{F}_T^{W^0}]=E[x_t^{\beta,o}|\mathcal{F}_t^{W^0}]$. Indeed, we denote by $\mathcal{F}_{t,T}^{W^0}$ the filtration generated by the increments of $W^0$ on $(t,T]$ augmented with $P$-null sets. Then we have $\mathcal{F}_T^{W^0}=\mathcal{F}_t^{W^0}\bigvee\mathcal{F}_{t,T}^{W^0}$. We note that $\mathcal{F}_{t,T}^{W^0}$ is independent of $\mathcal{F}_t^{W^0}$ and $\mathcal{F}_t^\beta$. Since $\sigma (x_t^{\beta,o})\subset\mathcal{F}_t^\beta$, $\mathcal{F}_{t,T}^{W^0}$ is independent of $\sigma (x_t^{\beta,o})$, and so $\mathcal{F}_{t,T}^{W^0}$ is independent of $x_t^{\beta,o}$. Thus, for all $t\in[0,T]$, we have $E[x_t^{\beta,o}|\mathcal{F}_t^{W^0}\bigvee\mathcal{F}_{t,T}^{W^0}]=E[x_t^{\beta,o}|\mathcal{F}_t^{W^0}]$.
\end{remark}

\begin{proposition}\label{proposition1}
 For any given process $z\in \mathcal{L}_c$, assume that there is a pair $(g,q^1)\in\mathcal{L}_c\times\mathcal{L}_L$ such that for $a.e.\ \alpha\in[0,1]$, $(g^\alpha,q^{1\alpha})$ satisfies for any $t\in[0,T]$, $a.s.\ \omega$
\begin{equation}\label{galpha}
\left\{
\begin{split}
   & dg_t^\alpha=\left[\left(\frac{B^2}{2R}f_t-A\right)g_t^\alpha+(2QH-Df_t)z_t^\alpha+2QH\eta\right]dt+q_t^{1\alpha}dW_t^0, \\
    & g_T^\alpha=-2Q_TH(z_T^\alpha+\eta).
\end{split}
\right.
\end{equation}
Then, for $a.e.\ \alpha\in [0,1]$, there exists an optimal control for the optimal control problems (\ref{Jalphaoptimal}-\ref{xalphaoptimal}) with the form
\begin{equation}\label{controlalpha}
  u_t^{\alpha,o}=-\frac{B}{2R}(f_tx_t^{\alpha,o}+g_t^\alpha),
\end{equation}
where $f$ and the optimal state process $x^{\alpha,o}$ is given by
\begin{equation}\label{optimalx}
\left\{\begin{split}
  &\dot{f}_t-\frac{B^2}{2R}f_t^2+2Af_t+2Q=0,\ f_T=2Q_T,\\
  &dx_t^{\alpha,o}=\left[\left(A-\frac{B^2}{2R}f_t\right)x_t^{\alpha,o}-\frac{B^2}{2R}g_t^\alpha+Dz_t^\alpha\right]dt+\Sigma dw_t^\alpha+\Sigma_0dW_t^0.
\end{split}\right.
\end{equation}
\end{proposition}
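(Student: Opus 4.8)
The plan is to treat (\ref{Jalphaoptimal})--(\ref{xalphaoptimal}) as a standard linear--quadratic stochastic optimal control problem in which the tracking signal $z^\alpha$ is a \emph{fixed} input belonging to $\mathcal{L}_c$, hence adapted to the common-noise filtration $\{\mathcal{F}_t^{W^0}\}$. Since $Q,Q_T\ge 0$ and $R>0$, the cost is convex in $(x,u)$ and the state dynamics are affine, so the Pontryagin-type first-order conditions are necessary and sufficient. First I would form the Hamiltonian
\begin{equation*}
\mathcal{H}(t,x,u,p)=p\big(Ax+Bu+Dz_t^\alpha\big)+Q\big(x-\nu_t^\alpha\big)^2+R u^2,
\end{equation*}
write the associated adjoint equation $dp_t=-\partial_x\mathcal{H}\,dt+k_t^{1}\,dw_t^\alpha+k_t^{0}\,dW_t^0$ with terminal value $p_T=2Q_T(x_T^{\alpha,o}-\nu_T^\alpha)$, and minimise $\mathcal{H}$ in $u$ to obtain the candidate feedback $u_t^{\alpha,o}=-\frac{B}{2R}p_t$. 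Because $\Sigma$ and $\Sigma_0$ are constants (independent of state and control), no second-order adjoint enters the pointwise minimisation, which is precisely what keeps the optimal control affine in the state.

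The second step is to \emph{decouple} the resulting forward--backward system by the linear ansatz $p_t=f_t\,x_t^{\alpha,o}+g_t^\alpha$, where $f$ is the (deterministic) unique bounded solution of the scalar Riccati equation on $[0,T]$. Applying It\^o's formula to this product along the closed-loop dynamics and requiring that $p$ solve the adjoint equation, the state-dependent part of the drift cancels exactly when $f$ satisfies $\dot f_t-\frac{B^2}{2R}f_t^2+2Af_t+2Q=0$, while the state-independent part reproduces precisely the backward equation (\ref{galpha}) for $(g^\alpha,q^{1\alpha})$; the terminal data $f_T=2Q_T$ and $g_T^\alpha=-2Q_TH(z_T^\alpha+\eta)$ together recover $p_T=2Q_T(x_T^{\alpha,o}-\nu_T^\alpha)$. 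The diffusion of $p$ then splits as $f_t\Sigma\,dw_t^\alpha+(f_t\Sigma_0+q_t^{1\alpha})\,dW_t^0$, so the integrand against the idiosyncratic noise is the deterministic $f_t\Sigma$ and the only genuinely unknown martingale integrand is $q^{1\alpha}$ against the common noise---this is why (\ref{galpha}) carries a $dW_t^0$ term only. Substituting the ansatz into $u_t^{\alpha,o}=-\frac{B}{2R}p_t$ gives the asserted feedback (\ref{controlalpha}), whose closed-loop state solves the second line of (\ref{optimalx}).

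For a self-contained proof of optimality I would use completion of squares rather than quote a maximum-principle theorem. Taking an \emph{arbitrary} admissible $u^\alpha$ with state $x^\alpha$, setting $\Theta_t:=\tfrac12 f_t(x_t^\alpha)^2+g_t^\alpha x_t^\alpha$, and applying It\^o's formula---noting that the cross-variation $d\langle g^\alpha,x^\alpha\rangle_t=q_t^{1\alpha}\Sigma_0\,dt$ arises solely from the common noise---one uses the Riccati equation, (\ref{galpha}) and the terminal data to obtain, after taking expectations,
\begin{equation*}
J(u^\alpha,z^\alpha)=\mathcal{I}^\alpha+E\int_0^T R\Big(u_t^\alpha+\tfrac{B}{2R}\big(f_t x_t^\alpha+g_t^\alpha\big)\Big)^2\,dt,
\end{equation*}
where the residual $\mathcal{I}^\alpha$ collects only terms built from $x_0^\alpha$, $f$, $g^\alpha$, $q^{1\alpha}$ and $z^\alpha$ and is therefore independent of the control. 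The right-hand side is minimised exactly when the integrand vanishes, i.e.\ when $u^\alpha=u^{\alpha,o}$, which both proves optimality and identifies the minimiser; in particular $\mathcal{I}^\alpha=J(u^{\alpha,o},z^\alpha)$.

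The main obstacle I anticipate is not the algebra but the bookkeeping forced by the common noise. Concretely, one must (i) confirm that the non-state part of the costate is $\mathcal{F}^{W^0}$-measurable---this is exactly the requirement $(g^\alpha,q^{1\alpha})\in\mathcal{L}_c\times\mathcal{L}_L$ and rests on $z^\alpha$ being $\mathcal{F}^{W^0}$-adapted; (ii) justify that every stochastic integral produced by the It\^o expansion of $\Theta_t$ is a true martingale, so that it disappears under expectation, which needs $E\int_0^T(f_t x_t^\alpha\Sigma)^2\,dt<\infty$ together with the analogous bounds for the $g^\alpha$-terms---these follow from boundedness of $f$ on $[0,T]$, from $x^\alpha\in L^2_{\mathcal{F}}(\Omega;C([0,T];\mathbb{R}))$, and from the integrability built into $\mathcal{L}_c,\mathcal{L}_L$ via the BDG inequality; and (iii) verify admissibility of the candidate, namely that (\ref{optimalx}) possesses a unique solution $x^{\alpha,o}\in L^2_{\mathcal{F}}(\Omega;C([0,T];\mathbb{R}))$ so that $u^{\alpha,o}$ is $\mathcal{F}_t^\alpha$-progressively measurable and square-integrable, which follows from standard linear SDE theory once $z^\alpha,g^\alpha\in\mathcal{L}_c$.
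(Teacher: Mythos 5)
Your proposal is correct, and its core architecture --- the stochastic Hamiltonian system with adjoint process $p$, the affine decoupling ansatz $p_t=f_tx_t^{\alpha,o}+g_t^\alpha$ producing the Riccati equation for $f$ and the BSDE (\ref{galpha}) for $(g^\alpha,q^{1\alpha})$, and the identification of the diffusion coefficients $f_t\Sigma$ and $f_t\Sigma_0+q_t^{1\alpha}$ --- coincides with the paper's. Where you genuinely diverge is in the verification of optimality. The paper uses convexity of the quadratic cost to bound $J(u^{\alpha,o},z^\alpha)-J(u^\alpha,z^\alpha)$ by the first-order variation and then applies It\^o's formula to the duality product $P_t^\alpha(x_t^{\alpha,o}-x_t^\alpha)$ to show that variation vanishes, i.e.\ a sufficient stochastic maximum principle argument. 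You instead apply It\^o's formula to $\Theta_t=\tfrac12 f_t(x_t^\alpha)^2+g_t^\alpha x_t^\alpha$ along an \emph{arbitrary} admissible trajectory and complete the square; with the Riccati equation and (\ref{galpha}) the coefficients of $(x_t^\alpha)^2$ and $x_t^\alpha$ in the resulting drift do cancel, and the terminal data match $Q_T(x_T^\alpha-\nu_T^\alpha)^2$ up to the control-independent constant $Q_T(\nu_T^\alpha)^2$, so your identity $J(u^\alpha,z^\alpha)=\mathcal{I}^\alpha+E\int_0^TR\bigl(u_t^\alpha+\frac{B}{2R}(f_tx_t^\alpha+g_t^\alpha)\bigr)^2dt$ holds with $\mathcal{I}^\alpha$ independent of the control (the cross-variation term $q_t^{1\alpha}\Sigma_0\,dt$ is correctly absorbed into $\mathcal{I}^\alpha$). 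Your route buys slightly more: since $R>0$ it gives uniqueness of the minimizer in feedback form and an explicit formula for the optimal cost, whereas the paper's duality argument only yields the inequality $J(u^{\alpha,o},z^\alpha)\le J(u^\alpha,z^\alpha)$ but is shorter and extends verbatim to general convex non-quadratic costs. Both arguments share the same technical caveat, which you flag explicitly and the paper passes over silently: the stochastic integrals produced by It\^o's formula (in particular $\int_0^\cdot x_t^\alpha q_t^{1\alpha}\,dW_t^0$) must be shown to be true martingales, which follows from $x^\alpha\in L^2_{\mathcal{F}}(\Omega;C([0,T];\mathbb{R}))$, $q^{1\alpha}\in L^2_{\mathcal{F}^{W^0}}(0,T;\mathbb{R})$, Cauchy--Schwarz and the BDG inequality.
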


\begin{proof}
For $a.e.\ \alpha\in [0,1]$, we link problem (\ref{Jalphaoptimal}-\ref{xalphaoptimal}) to a stochastic Hamiltonian system as follows
\begin{equation}\label{FBSDE0}
\left\{
  \begin{split}
    & 0=2Ru_t^{\alpha,o}+BP_t^{\alpha},\\
    & dx_t^{\alpha,o}=(Ax_t^{\alpha,o}+Bu_t^{\alpha,o}+Dz_t^\alpha)dt+\Sigma dw_t^\alpha+\Sigma_0dW_t^0, \\
    & dP_t^\alpha=-[AP_t^\alpha+2Q(x_t^{\alpha,o}-\nu_t^\alpha)]dt+q_t^\alpha dw_t^\alpha+q_t^{0\alpha}dW_t^0,\\
    & x_0^{\alpha,o}=x_0^\alpha,\quad P_T^\alpha=2Q_T(x_T^{\alpha,o}-\nu_T^\alpha).
  \end{split}
  \right.
\end{equation}
Now we prove that, if $(x^{\alpha,o},P^\alpha,q^\alpha,q^{0\alpha})\in L_{\mathcal{F}^\alpha}^2(\Omega;C([0,T];\mathbb{R}))\times L_{\mathcal{F}^\alpha}^2(\Omega;C([0,T];\mathbb{R}))\times L_{\mathcal{F}^\alpha}^2(0,T;\mathbb{R})\times L_{\mathcal{F}^\alpha}^2(0,T;\mathbb{R})$ is an adapted solution of (\ref{FBSDE0}), $(x^{\alpha,o},u^{\alpha,o})$ is an optimal pair for problem (\ref{Jalphaoptimal}-\ref{xalphaoptimal}). For any admissible control $u^\alpha$ and the corresponding state $x^\alpha$, we analyze the difference between $J(u^{\alpha,o},z^\alpha)$ and $J(u^\alpha,z^\alpha)$. Since the parameters of cost function is positive (nonnegative), it holds that
\begin{align}\label{costfunctiondifference}
   J(u^{\alpha,o},z^\alpha)-J(u^\alpha,z^\alpha)\leq & E\int_0^T\big[2Q(x_t^{\alpha,o}-x_t^\alpha)(x_t^{\alpha,o}-\nu_t^\alpha)+2Ru_t^{\alpha,o}(u_t^{\alpha,o}-u_t^\alpha)\big]dt\nonumber\\
  &+2Q_TE(x_T^{\alpha,o}-x_T^\alpha)(x_T^{\alpha,o}-\nu_T^\alpha).
\end{align}

Applying It\^{o}'s formula to $P_t^\alpha (x_t^{\alpha,o}-x_t^\alpha)$ on the interval $[0,T]$ and taking expectation, the right-hand side of (\ref{costfunctiondifference}) should be zero. Then we obtain $J(u^{\alpha,o},z^\alpha)-J(u^\alpha,z^\alpha)\leq 0$, which yields that $u^{\alpha,o}$ is an optimal control.

Next, we show that (\ref{FBSDE0}) has an adapted solution. We do this by construction. Let $f_t$ be the solution of the first equation of \eqref{optimalx} (the uniqueness and existence of this Riccati equation can be obtained by \cite[Th.3.5]{Freiling}), and $(g,q^1)$ be a pair given by the assumption in this proposition such that for a.e. $\alpha\in [0,1]$, $(g^\alpha,q^{1\alpha})\in L_{\mathcal{F}^{W^0}}^2(\Omega;C([0,T];\mathbb{R}))\times L_{\mathcal{F}^{W^0}}^2(0,T;\mathbb{R})$ is a solution of (\ref{galpha}). Let $x_t^{\alpha,o}$ be the solution of the SDE
\begin{equation}\label{x+}
	dx_t^{\alpha,o}=\left[\left(A-\frac{B^2}{2R}f_t\right)x_t^{\alpha,o}-\frac{B^2}{2R}g_t^\alpha+Dz_t^\alpha\right]dt+\Sigma dw_t^\alpha+\Sigma_0dW_t^0,\  x_0^{\alpha,o}=x_0^\alpha.
\end{equation}
Denote $P_t^\alpha=f_tx_t^{\alpha,o}+g_t^\alpha$. By It\^{o}'s formula
\begin{align*}
	dP_t^\alpha=&\dot{f}_tx_t^{\alpha,o}dt+f_tdx_t^{\alpha,o}+dg_t^\alpha\\
	=&-[AP_t^\alpha+2Q(x_t^{\alpha,o}-\nu_t^\alpha)]dt+\Sigma f_t dw_t^\alpha+(\Sigma f_t+q_t^{1\alpha})dW_t^0.
\end{align*}
This, combined with \eqref{x+} implies that $(x_t^{\alpha,o},P_t^\alpha,\Sigma f_t,\Sigma f_t+q_t^{1\alpha})$ is an adapted solution of (\ref{FBSDE0}), and $u_t^{\alpha,o}=-\frac{B}{2R}(f_tx_t^{\alpha,o}+g_t^\alpha)$ is an optimal control for the problem (\ref{Jalphaoptimal}-\ref{xalphaoptimal}).

\end{proof}

Define a function $\mu:I\to \mathbb{R}$ by setting
\begin{equation}\label{mu}
\mu(\alpha)=Ex_0^\alpha, \quad \forall \alpha\in I.
\end{equation}

In the sequel, we assume that the function $\mu$ defined by \eqref{mu} is measurable. We note that such a assumption can be satisfied easily. Since $\mu$ is bounded, it follows that $\mu\in L^2[0,1]$.

\begin{proposition}\label{proposition2}
Let $z$ be in $\mathcal{L}_c$, and $(g,q^1)$ be a process satisfies the assumption in Proposition \ref{proposition1}. Define
\begin{equation}\label{conditionalexpectation}
	\varphi_t^\alpha \triangleq \Psi(t)\mu^\alpha+\Psi(t)\int_0^t\Psi(s)^{-1}\Big(-\frac{B^2}{2R}g_s^\alpha+Dz_s^\alpha\Big)ds+\Psi(t)\int_0^t\Psi(s)^{-1}\Sigma_0dW_s^0,
\end{equation}
where $\Psi(t)=\exp\big(\int_0^t (A-\frac{B^2}{2R}f_s)ds\big)$.
Then for a.e. $\alpha$, each $t$, $\varphi_t^\alpha$ is a conditional expectation of $x_t^{\alpha,o}$ given $\mathcal{F}_T^{W^0}$. Moreover, one has for a.e. $\alpha\in[0,1]$, a.s. $\omega\in\Omega$, each $t\in [0,T]$,
$$
z_t^\alpha=\int_0^1M(\alpha,\beta)\varphi_t^\beta d\beta
$$
if and only if for a.e. $\alpha\in[0,1]$, a.s. $\omega\in\Omega$, each $t\in [0,T]$,
\begin{align}\label{0}
	z_t^\alpha= & \Psi(t)\int_0^1 M(\alpha,\beta)\mu^\beta d\beta+\Psi(t)\int_0^t\Psi(s)^{-1} \Big[-\frac{B^2}{2R}\int_0^1 M(\alpha,\beta)g_s^\beta d\beta\Big.\nonumber\\
	& \Big.+D\int_0^1 M(\alpha,\beta)z_s^\beta d\beta\Big]ds+\Psi(t)\int_0^t\Psi(s)^{-1} \Sigma_0 \int_0^1 M(\alpha,\beta)d\beta dW_s^0.
\end{align}
\end{proposition}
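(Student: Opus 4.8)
The plan is to prove the two assertions in turn: first that $\varphi_t^\alpha$ given by \eqref{conditionalexpectation} is a version of $E[x_t^{\alpha,o}\mid\mathcal{F}_T^{W^0}]$, and then that the consistency equation $z_t^\alpha=\int_0^1 M(\alpha,\beta)\varphi_t^\beta\,d\beta$ is, after an application of Fubini's theorem, literally the same identity as \eqref{0}, which makes the stated equivalence immediate.

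For the first assertion I would solve the linear SDE \eqref{x+} explicitly by variation of constants. Since $\Psi(t)=\exp\big(\int_0^t(A-\tfrac{B^2}{2R}f_s)\,ds\big)$ is the fundamental solution of the homogeneous part, the unique solution is
$$x_t^{\alpha,o}=\Psi(t)x_0^\alpha+\Psi(t)\!\int_0^t\!\Psi(s)^{-1}\Big(-\tfrac{B^2}{2R}g_s^\alpha+Dz_s^\alpha\Big)ds+\Psi(t)\!\int_0^t\!\Psi(s)^{-1}\Sigma\,dw_s^\alpha+\Psi(t)\!\int_0^t\!\Psi(s)^{-1}\Sigma_0\,dW_s^0.$$
Conditioning on $\mathcal{F}_T^{W^0}$ term by term then produces \eqref{conditionalexpectation}. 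Indeed, $x_0^\alpha$ is independent of $W^0$, so $E[x_0^\alpha\mid\mathcal{F}_T^{W^0}]=\mu^\alpha$; the drift integral is $\mathcal{F}_T^{W^0}$-measurable because $g,z\in\mathcal{L}_c$ are $\mathcal{F}^{W^0}$-adapted, and is therefore unchanged; the idiosyncratic Wiener integral against $w^\alpha$ is independent of $W^0$ and centered, so its conditional expectation vanishes; and the common-noise integral is already $\mathcal{F}_t^{W^0}$-measurable and is retained.

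For the second assertion, I would substitute \eqref{conditionalexpectation} for $\varphi_t^\beta$ into $\int_0^1 M(\alpha,\beta)\varphi_t^\beta\,d\beta$ and interchange the $\beta$-integration with the inner integrals. For the drift term this is an ordinary Fubini interchange; for the common-noise term it is trivial, since the stochastic integral $\Psi(t)\int_0^t\Psi(s)^{-1}\Sigma_0\,dW_s^0$ does not depend on $\beta$, so that $\int_0^1 M(\alpha,\beta)\,d\beta$ factors out as a deterministic constant and may be absorbed into the integrand of the stochastic integral. The outcome of these interchanges is exactly the right-hand side of \eqref{0}; thus the equation $z_t^\alpha=\int_0^1 M(\alpha,\beta)\varphi_t^\beta\,d\beta$ and \eqref{0} are one and the same identity, and the ``if and only if'' follows at once in both directions.

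The main obstacle is the rigorous justification of the deterministic Fubini interchange for the drift term, carried out on the three-variable functions in $\mathcal{L}_c$. One must verify the almost-sure absolute integrability $\int_0^t\!\int_0^1|\Psi(s)^{-1}M(\alpha,\beta)g_s^\beta|\,d\beta\,ds<\infty$, together with the analogous estimate with $z$ in place of $g$. This is precisely where the boundedness of the graphon $M$ and of $\Psi,\Psi^{-1}$ on $[0,T]$ (recall that $f$ is the continuous, hence bounded, solution of the Riccati equation in \eqref{optimalx}), combined with the integrability conditions (i) and (iv) built into the definition of $\mathcal{L}_c$, enter the argument. Once measurability and this integrability are secured, the remaining manipulations are purely algebraic.
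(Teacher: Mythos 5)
Your proposal is correct and follows essentially the same route as the paper's proof: variation of constants for the linear SDE, term-by-term conditioning on $\mathcal{F}_T^{W^0}$ (the paper invokes a conditional Fubini theorem where you argue by adaptedness of the drift, but these amount to the same thing), and a Tonelli--Fubini interchange of the $\beta$-integral justified by the integrability built into $\mathcal{L}_c$. The only cosmetic difference is that the paper phrases the equivalence as subtracting the computed identity from \eqref{0} rather than declaring the two equations identical, which is immaterial.
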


\begin{proof}
For $z\in \mathcal{L}_c$ and $(g,q^1)\in\mathcal{L}_c\times\mathcal{L}_L$ satisfies (\ref{galpha}), by (\ref{optimalx}) and applying \cite[Th.3.3]{lvqi}, we have that for a.e. $\alpha\in[0,1]$. a.s. $\omega\in \Omega$,
\begin{align*}
	x_t^{\alpha,o}= & x_0^{\alpha,o}\Psi(t)+\Psi(t)\int_0^t\Psi(s)^{-1} \Big(-\frac{B^2}{2R}g_s^\alpha+Dz_s^\alpha\Big)ds+\Psi(t)\int_0^t\Psi(s)^{-1}\Sigma dw_s^\alpha\\
	& +\Psi(t)\int_0^t\Psi(s)^{-1}\Sigma_0 dW_s^0,\quad \forall t\in[0,T].
\end{align*}
Then, by the definition of $\mathcal{L}_c$ and applying conditional Fubini theorem \cite[Th. 27.17]{Schilling}(or \cite[Lem 2.3]{Brooks}),
it holds that for a.e. $\alpha\in[0,1]$, each $t\in[0,T]$
\begin{align*}
E[x_t^{\alpha,o}|\mathcal{F}_T^{W^0}]= & \Psi(t)\mu^\alpha+\Psi(t)\int_0^t\Psi(s)^{-1}\Big(-\frac{B^2}{2R}g_s^\alpha+Dz_s^\alpha\Big)ds\\
& +\Psi(t)\int_0^t\Psi(s)^{-1}\Sigma_0 dW_s^0,\quad \mathbb{P}.\ a.s.
\end{align*}
which means for a.e. $\alpha\in[0,1]$, each $t\in[0,T]$, $\varphi_t^\alpha$ is a conditional expectation of $x_t^{\alpha,o}$ given $\mathcal{F}_T^{W^0}$.

To  prove the second conclusion, we first note that for a.s. $\omega\in\Omega$, each $t\in[0,T]$, $\varphi_t^\cdot\in L^1 [0,1]$. Indeed, since $z,g\in\mathcal{L}_c$, applying Tonelli theorem and Fubini theorem \cite{Royden}, we have that for a.s. $\omega\in\Omega$, each $t\in[0,T]$, $\int_0^t\Psi(s)^{-1}\big(-\frac{B^2}{2R}g_s^\cdot+Dz_s^\cdot\big)ds$ is in $L^1[0,1]$. From the measurability and boundedness of $\mu^\cdot$, we conclude that for a.s. $\omega\in\Omega$, each $t\in[0,T]$, $\varphi_t^\cdot\in L^1 [0,1]$.

By (\ref{conditionalexpectation}) and using Tonelli theorem and Fubini theorem, it holds that for a.e. $\alpha\in[0,1]$, a.s. $\omega\in\Omega$, each $t\in [0,T]$,
\begin{align}\label{unknown}
	& \int_0^1 M(\alpha,\beta) \varphi_t^\beta d\beta\nonumber\\
	= & \int_0^1 M(\alpha,\beta) \Psi(t)\mu^\beta d\beta+\int_0^1 M(\alpha,\beta) \Psi(t)\int_0^t\Psi(s)^{-1}\Big(-\frac{B^2}{2R}g_s^\beta+Dz_s^\beta\Big)dsd\beta\nonumber\\
	& +\int_0^1 M(\alpha,\beta) \Psi(t)\int_0^t\Psi(s)^{-1}\Sigma_0 dW_s^0 d\beta\nonumber\\
	= & \Psi(t)\int_0^1 M(\alpha,\beta)\mu^\beta d\beta+\Psi(t)\int_0^t\Psi(s)^{-1} \Big[-\frac{B^2}{2R}\int_0^1 M(\alpha,\beta)g_s^\beta d\beta\Big.\nonumber\\
	& \Big.+D\int_0^1 M(\alpha,\beta)z_s^\beta d\beta\Big]ds+\Psi(t)\int_0^t\Psi(s)^{-1} \Sigma_0 \int_0^1 M(\alpha,\beta)d\beta dW_s^0.
\end{align}
If for a.e. $\alpha\in[0,1]$, a.s. $\omega\in\Omega$, each $t\in [0,T]$, $z_t^\alpha=\int_0^1M(\alpha,\beta)\varphi_t^\beta d\beta$, from (\ref{unknown}), we can derive (\ref{0}). For the sufficiency, subtracting (\ref{unknown}) from (\ref{0}), we have that for a.e. $\alpha\in[0,1]$, a.s. $\omega\in\Omega$, each $t\in [0,T]$,
$$
z_t^\alpha-\int_0^1M(\alpha,\beta)\varphi_t^\beta d\beta=0.
$$
This completes the proof.
\end{proof}

Compiling Propositions \ref{proposition1} and \ref{proposition2}, we have that the LQ-GMFG with common noise is solvable, whenever there exists a process $(z,g,q^1)\in \mathcal{L}_c\times\mathcal{L}_c\times\mathcal{L}_L$, which satisfies for a.e. $\alpha\in[0,1]$, each $t\in[0,T]$, a.s. $\omega\in\Omega$,
\begin{equation}\label{FBSDE*}
\left\{
\begin{split}
   z_t^\alpha= & \Psi(t)\int_0^1 M(\alpha,\beta)\mu^\beta d\beta+\Psi(t)\int_0^t\Psi(s)^{-1} \Big[-\frac{B^2}{2R}\int_0^1 M(\alpha,\beta)g_s^\beta d\beta\Big.\\
   & \Big.+D\int_0^1 M(\alpha,\beta)z_s^\beta d\beta\Big]ds+\Psi(t)\int_0^t\Psi(s)^{-1} \Sigma_0 \int_0^1 M(\alpha,\beta)d\beta dW_s^0,\\
    g_t^\alpha=&-2Q_TH(z_T^\alpha+\eta)+\int_t^T\Big[(A-\frac{B^2}{2R}f_s)g_s^\alpha+(Df_s-2QH)z_s^\alpha-2QH\eta\Big]ds\\
    &-\int_t^Tq_s^{1\alpha}dW_s^0.
\end{split}
\right.
\end{equation}

Next, we focus on finding the sufficient conditions ensuring for the existence of the solution for (\ref{FBSDE*}) by applying the spectral decomposition method developed in \cite{Gao4,Gao5}. To this end, we need the following assumption which also appears in \cite{Parise,Gao,Foguen-Tchuendom,Dunyak1,Dunyak2}.

\begin{assumption}\label{finiteeigenvalues}
The graphon operator $M$ corresponding to $M(\cdot,\cdot)$ defined by (\ref{graphonoperator}) admits finite non-zero eigenvalues $\{\lambda_l\}_{l=1}^d$ with a set of orthonormal eigenfunctions $\{f_l\}_{l=1}^d$.
\end{assumption}
The following proposition provides a solution for (\ref{FBSDE*}) with a decomposed form.

\begin{proposition}\label{prop3.3}
For each $l\in\{1,\ldots,d\}$, if there exists a unique triple $(z^l,g^l,q^{1l})\in L_{\mathcal{F}^{W^0}}^2(\Omega;C([0,T];\mathbb{R}))\times L_{\mathcal{F}^{W^0}}^2(\Omega;C([0,T];\mathbb{R}))\times L_{\mathcal{F}^{W^0}}^2(0,T;\mathbb{R})$ satisfying for each $t\in[0,T]$, a.s. $\omega\in\Omega$
\begin{equation}\label{FBSDEl}
\left\{
\begin{split}
   &z_t^l=\lambda_l\langle\mu,f_l\rangle+\int_0^t\left[(A-\frac{B^2}{2R}f_s+D\lambda_l)z_s^l-\frac{B^2}{2R}\lambda_lg_s^l\right]ds+\int_0^t\Sigma_0\lambda_l\langle f_l,\mathds{1}\rangle dW_s^0,\\
   &g_t^l=-2Q_TH(z_T^l+\eta\langle \mathds{1},f_l \rangle)+\int_t^T\left[(A-\frac{B^2}{2R}f_s)g_s^l+(Df_s-2QH)z_s^l-2QH\eta\langle \mathds{1},f_l \rangle\right] ds-\int_t^T q_s^{1l}dW_s^0,
\end{split}
\right.
\end{equation}
then the triple $(z_t^\alpha,g_t^\alpha,q_t^{1\alpha})$ given below is in $\mathcal{L}_c\times\mathcal{L}_c\times\mathcal{L}_L$  and satisfies (\ref{FBSDE*}): for all $\alpha\in [0,1]$, $t\in [0,T]$ and $\omega\in\Omega$,
\begin{equation}\label{couplezgq}
\left\{
\begin{split}
&z_t^\alpha=\sum\limits_{l=1}^d z_t^l f_l(\alpha),\\
&g_t^\alpha=\sum\limits_{l=1}^d g_t^l f_l(\alpha)+\mathring{g}_t\left(\mathds{1}-\sum\limits_{l=1}^d \langle \mathds{1},f_l\rangle f_l\right)(\alpha),\\
&q_t^{1\alpha}=\sum\limits_{l=1}^d q_t^{1l} f_l(\alpha),
\end{split}
\right.
\end{equation}
where $f_l(\cdot)$ is any representation in the equivalence class $f_l$ for each $1\leq l\leq d$, $\mathring{g}$ is the unique solution of the following ODE
\begin{equation}\label{mathringg}
   \frac{d\mathring{g}_t}{dt}=-\left(A-\frac{B^2}{2R}f_t\right)\mathring{g}_t+2QH\eta,\quad \mathring{g}_T=-2Q_TH\eta.
\end{equation}
\end{proposition}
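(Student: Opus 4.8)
The plan is to verify \emph{directly} that the decomposed triple defined by \eqref{couplezgq} satisfies \eqref{FBSDE*}, exploiting the spectral action of the graphon operator on the ansatz. First I would record the spectral facts implied by Assumption \ref{finiteeigenvalues}: since $M$ is self-adjoint and compact with only the non-zero eigenvalues $\{\lambda_l\}_{l=1}^d$ and orthonormal eigenfunctions $\{f_l\}_{l=1}^d$, the orthogonal complement of $\mathrm{span}\{f_l\}_{l=1}^d$ coincides with $\ker M$, and for every $\varphi\in L^2[0,1]$ one has $(M\varphi)(\alpha)=\sum_{l=1}^d\lambda_l\langle\varphi,f_l\rangle f_l(\alpha)$. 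In particular $(M\mu)(\alpha)=\sum_l\lambda_l\langle\mu,f_l\rangle f_l(\alpha)$ and $(M\mathds{1})(\alpha)=\sum_l\lambda_l\langle\mathds{1},f_l\rangle f_l(\alpha)$, and -- the key point -- the function $h:=\mathds{1}-\sum_l\langle\mathds{1},f_l\rangle f_l$ lies in $\ker M$, so the extra summand $\mathring{g}_t\,h$ in the ansatz for $g$ is annihilated whenever $M$ is applied to $g$.

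Next I would substitute the ansatz into the forward (first) line of \eqref{FBSDE*}. By the spectral identities above, each of the averages $\int_0^1 M(\alpha,\beta)\mu^\beta d\beta$, $\int_0^1 M(\alpha,\beta)g_s^\beta d\beta$, $\int_0^1 M(\alpha,\beta)z_s^\beta d\beta$ and $\int_0^1 M(\alpha,\beta)d\beta$ collapses to a finite sum of terms $\lambda_l(\cdot)f_l(\alpha)$, the $\mathring{g}$-part dropping out by the kernel property. Factoring $f_l(\alpha)$, the right-hand side becomes $\sum_l f_l(\alpha)R_t^l$, where $R_t^l$ is exactly the variation-of-constants (mild) representation, with integrating factor $\Psi$, of the linear SDE in the first line of \eqref{FBSDEl}; here the $D\lambda_l z_s^l$ term stays inside the drift integral because $\Psi$ carries only the coefficient $A-\frac{B^2}{2R}f_s$. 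Invoking the explicit solution formula for linear SDEs used in the proof of Proposition \ref{proposition2} (via \cite[Th.3.3]{lvqi}), I identify $R_t^l=z_t^l$, so the forward equation reduces to $z_t^\alpha=\sum_l z_t^l f_l(\alpha)$, which holds by definition.

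For the backward (second) line I would proceed similarly, but must first split the constant: writing $\eta\mathds{1}=\eta\sum_l\langle\mathds{1},f_l\rangle f_l+\eta\,h$ decomposes every $\eta$-term into a $\mathrm{span}\{f_l\}$-part and a $\ker M$-part. Substituting the ansatz and collecting the $f_l(\alpha)$-coefficients reproduces precisely the right-hand side of the second line of \eqref{FBSDEl}, which equals $g_t^l$ by that equation; collecting the $h(\alpha)$-coefficient reproduces the integral form of the auxiliary ODE \eqref{mathringg}, which equals $\mathring{g}_t$ by that equation. Since $\{f_l\}$ and $h$ are mutually orthogonal these contributions do not interfere, and the right-hand side equals $\sum_l g_t^l f_l(\alpha)+\mathring{g}_t\,h(\alpha)=g_t^\alpha$, as required; the terminal conditions match term by term through the same decomposition of $\eta$.

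Finally I would check that $(z,g,q^1)$ given by \eqref{couplezgq} lies in $\mathcal{L}_c\times\mathcal{L}_c\times\mathcal{L}_L$. Each component is a finite sum of products of a fixed measurable representative $f_l(\alpha)$ (or $h(\alpha)$) with an $\mathcal{F}^{W^0}$-progressively measurable process ($z_t^l,g_t^l,q_t^{1l}$) of the stated regularity, so joint measurability in $(\alpha,t,\omega)$, the pathwise continuity in $t$, and the $L^1[0,1]$ and integrability requirements defining $\mathcal{L}_c$ follow from $f_l,h\in L^2[0,1]$ together with the Cauchy--Schwarz inequality and the moment bounds $E\sup_{0\le t\le T}|z_t^l|^2<\infty$ (and the analogous bound for $g^l$), while the $\mathcal{L}_L$ condition for $q^1$ is immediate. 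I expect the main obstacle to be the clean bookkeeping of the kernel projection -- ensuring that the $\mathring{g}_t\,h$ term is genuinely killed by $M$ in the forward equation yet is exactly the term that forces \eqref{mathringg} in the backward equation through the decomposition of $\eta\mathds{1}$ -- rather than the routine verification of membership in $\mathcal{L}_c$.
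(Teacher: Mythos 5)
Your proposal is correct and follows essentially the same route as the paper's proof: both rest on the spectral identity $(M\varphi)(\alpha)=\sum_{l=1}^d\lambda_l\langle\varphi,f_l\rangle f_l(\alpha)$ together with the fact that $\mathds{1}-\sum_{l}\langle\mathds{1},f_l\rangle f_l\in\ker M$, on identifying $z_t^l$ with its variation-of-constants representation $\Psi(t)Y^l(t)$ (which the paper establishes via It\^{o}'s formula applied to $\Psi(t)Y^l(t)$ and a homogeneous-equation uniqueness argument, rather than by directly quoting the linear-SDE solution formula), and on splitting the $\eta$-terms of the backward equation across $\mathrm{span}\{f_l\}_{l=1}^d$ and its orthogonal complement so that the $f_l$-coefficients reproduce \eqref{FBSDEl} and the kernel coefficient reproduces \eqref{mathringg}. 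The remaining differences (verifying the forward equation from right to left rather than left to right, and spelling out the routine membership check in $\mathcal{L}_c\times\mathcal{L}_c\times\mathcal{L}_L$ that the paper leaves to the reader) are purely presentational.
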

\begin{proof}
It can be easily checked that $(z_t^\alpha,g_t^\alpha,q_t^{1\alpha})$ given by (\ref{couplezgq}) is in $\mathcal{L}_c\times\mathcal{L}_c\times\mathcal{L}_L$. Denote
$$
Y^l(t)\triangleq \lambda_l\langle\mu,f_l\rangle +\int_0^t\Psi(s)^{-1}\Big(D\lambda_l z_s^l-\frac{B^2}{2R}\lambda_l g_s^l\Big)ds+\int_0^t\Psi(s)^{-1}\Sigma_0 \lambda_l \langle f_l, \mathds{1}\rangle dW_s^0.
$$
Using It\^{o}'s formula yields for a.s. $\omega\in\Omega$, each $t\in[0,T]$
\begin{align}\label{unknown1}
	\Psi(t)Y^l(t)= & \lambda_l\langle\mu,f_l\rangle+\int_0^t \Big[(A-\frac{B^2}{2R}f_s)\Psi(s)Y^l(s)+D\lambda_l z_s^l-\frac{B^2}{2R}\lambda_l g_s^l\Big]ds\\
	& +\int_0^t \Sigma_0 \lambda_l \langle f_l, \mathds{1}\rangle dW_s^0.\nonumber
	\end{align}
It can be verified $\Psi(t)Y^l(t)\in L_{\mathcal{F}}^2(\Omega;C([0,T],\mathbb{R}))$. From (\ref{FBSDEl}) and (\ref{unknown1}), one has for a.s. $\omega\in\Omega$, each $t\in[0,T]$
$$
	\Psi(t)Y^l(t)-z_t^l=\int_0^t (A-\frac{B^2}{2R}f_s)(\Psi(s)Y^l(s)-z_s^l)ds
$$
Thus
\begin{equation}\label{unknown2}
\mathbb{P}\big(\Psi(t)Y^l(t)-z_t^l=0,\ t\in[0,T]\big)=1.
\end{equation}
From (\ref{FBSDEl}), (\ref{couplezgq}) and (\ref{unknown2}), since $\int_0^1 M(\alpha,\beta)f_l(\beta)d\beta=\lambda_l f_l(\alpha)$\quad a.e. $\alpha\in[0,1]$, for $1\leq l\leq d$, it holds that for a.e. $\alpha\in[0,1]$, each $t\in[0,T]$, a.s. $\omega\in\Omega$,
\begin{align}\label{z_t^alpha}
	z_t^\alpha= & \sum\limits_{l=1}^d \Psi(t)Y^l(t) f_l(\alpha)\nonumber\\
	= & \sum\limits_{l=1}^d\left\{\lambda_l\langle\mu,f_l\rangle \Psi(t) +\Psi(t)\int_0^t\Psi(s)^{-1}\Big(D\lambda_l z_s^l-\frac{B^2}{2R}\lambda_l g_s^l\Big)ds\right.\nonumber\\
	& \left.+\Psi(t)\int_0^t\Psi(s)^{-1}\Sigma_0 \lambda_l \langle f_l, \mathds{1}\rangle dW_s^0\right\}f_l(\alpha)\nonumber\\
	= & \Psi(t)\sum\limits_{l=1}^d \lambda_l\langle\mu,f_l\rangle f_l(\alpha)+ \Psi(t)\int_0^t\Psi(s)^{-1}\Big(D\sum\limits_{l=1}^d\lambda_l z_s^l f_l(\alpha)-\frac{B^2}{2R}\sum\limits_{l=1}^d\lambda_l g_s^l f_l(\alpha)\Big)ds\nonumber\\
	& +\Psi(t)\int_0^t\Psi(s)^{-1}\Sigma_0 \sum\limits_{l=1}^d \lambda_l \langle f_l, \mathds{1}\rangle f_l(\alpha) dW_s^0\nonumber\\
	= & \Psi(t)\int_0^1 M(\alpha,\beta)\mu^\beta d\beta+\Psi(t)\int_0^t\Psi(s)^{-1} \Big[-\frac{B^2}{2R}\int_0^1 M(\alpha,\beta)g_s^\beta d\beta\Big.\nonumber\\
	& \Big.+D\int_0^1 M(\alpha,\beta)z_s^\beta d\beta\Big]ds+\Psi(t)\int_0^t\Psi(s)^{-1} \Sigma_0 \int_0^1 M(\alpha,\beta)d\beta dW_s^0.
\end{align}

It follows from (\ref{FBSDEl}), (\ref{couplezgq}) and (\ref{mathringg}) that for a.e. $\alpha\in [0,1]$, each $t\in[0,T]$, a.s. $\omega\in\Omega$,
\begin{align}\label{g_t^alpha}
    g_t^\alpha&=\sum\limits_{l=1}^d\Big\{-2Q_TH(z_T^l+\eta\langle \mathds{1},f_l \rangle)+\int_t^T\Big[(A-\frac{B^2}{2R}f_s)g_s^l+(Df_s-2QH)z_s^l\Big.\Big.\nonumber\\
    &\Big.\Big.-2QH\eta\langle \mathds{1},f_l \rangle\Big] ds-\int_t^T q_s^{1l}dW_s^0\Big\}f_l(\alpha)\nonumber\\
    &+\Big\{-2Q_TH\eta+\int_t^T\Big[(A-\frac{B^2}{2R}f_s)\mathring{g}_s-2QH\eta\Big]ds\Big\}(\mathds{1}-\sum\limits_{l=1}^d\langle f_l,\mathds{1}\rangle f_l)(\alpha)\nonumber\\
   &=-2Q_TH(z_T^\alpha+\eta)+\int_t^T\Big[(A-\frac{B^2}{2R}f_s)g_s^\alpha+(Df_s-2QH)z_s^\alpha-2QH\eta\Big]ds\nonumber\\
   &-\int_t^Tq_s^{1\alpha}dW_s^0.
\end{align}
Combining (\ref{z_t^alpha}) and (\ref{g_t^alpha}), the triple given by (\ref{couplezgq}) satisfies (\ref{FBSDE*}).
\end{proof}

Proposition \ref{prop3.3} shows that the LQ-GMFG problem under study has a Nash equilibrium if, for each $l\in\{1,\ldots,d\}$,  (\ref{FBSDEl}) admits a unique solution. To guarantee the unique existence of the solution of (\ref{FBSDEl}), we need the following monotonicity assumption \cite{Xu,Xu1,Peng}.

\begin{assumption}\label{monotonicity}
For each $l\in\{1,\ldots,d\}$, there exist positive constants $\beta_1^l$ and $\mu_1^l$ such that,
\begin{equation*}
\left\{
\begin{split}
&D\lambda_l xy+(2QH-Df_t)x^2-\frac{B^2}{2R}\lambda_l y^2\leq-\beta_1^l x^2,\quad \forall x,y\in \mathbb{R},\\
&Q_TH\leq-\mu_1^l,
\end{split}
\right.
\end{equation*}
or
\begin{equation*}
\left\{
\begin{split}
&D\lambda_l xy+(2QH-Df_t)x^2-\frac{B^2}{2R}\lambda_l y^2\geq\beta_1^l x^2,\quad \forall x,y\in \mathbb{R},\\
&Q_TH\geq\mu_1^l.
\end{split}
\right.
\end{equation*}
\end{assumption}

Under Assumption \ref{monotonicity}, it follows from Theorem 2.6 of Peng and Wu \cite{Peng} that there is a unique solution $(z^l,g^l,q^{1l})\in \left[L_{\mathcal{F}^{W^0}}^2(0,T;\mathbb{R})\right]^3$ satisfying (\ref{FBSDEl}) for each $l\in\{1,\ldots,d\}$. By the standard method (see, for example, \cite{Carmona6}), we can show that there exists a constant $C$ such that for all $l\in\{1,\ldots,d\}$,
\begin{equation}\label{bounded}
  E\sup\limits_{0\leq t\leq T}|z_t^l|^2+E\sup\limits_{0\leq t\leq T}|g_t^l|^2+E\int_0^T|q_t^{1l}|^2dt\leq C.
\end{equation}
Then, we have that $(z^l,g^l,q^{1l})\in L_{\mathcal{F}^{W^0}}^2(\Omega;C([0,T];\mathbb{R}))\times L_{\mathcal{F}^{W^0}}^2(\Omega;C([0,T];\mathbb{R}))\times L_{\mathcal{F}^{W^0}}^2(0,T;\mathbb{R})$. Moreover, under Assumption \ref{monotonicity}, similar to the proof of Theorem 2.2 in \cite{Peng}, if $(\bar{z}^l,\bar{g}^l,\bar{q}^{1l})\in L_{\mathcal{F}^{W^0}}^2(\Omega;C([0,T];\mathbb{R}))\times L_{\mathcal{F}^{W^0}}^2(\Omega;C([0,T];\mathbb{R}))\times L_{\mathcal{F}^{W^0}}^2(0,T;\mathbb{R})$ is another solution of (\ref{FBSDEl}), we derive that
$$
E\sup_t|z_t^l-\bar{z}_t^l|^2+E\sup_t|g_t^l-\bar{g}_t^l|^2+E\int_0^T|q_t^{1l}-\bar{q}_t^{1l}|^2dt=0.
$$
Thus, (\ref{FBSDEl}) has a unique solution $(z^l,g^l,q^{1l})\in L_{\mathcal{F}^{W^0}}^2(\Omega;C([0,T];\mathbb{R}))\times L_{\mathcal{F}^{W^0}}^2(\Omega;C([0,T];\mathbb{R}))\times L_{\mathcal{F}^{W^0}}^2(0,T;\mathbb{R})$.

\begin{remark}
Applying Assumption \ref{monotonicity}, when other parameters are given, we may determine whether a Nash equilibrium exists for the limit GMFG problem solely by observing the eigenvalues of the graphon operator.	
\end{remark}
\begin{remark}
By using Assumption \ref{monotonicity}, we can obtain a Nash equilibrium in an arbitrarily large time duration, which is an improvement compared with some previous work (for example, \cite{Gao,Gao5}).
\end{remark}

Next, we present an alternative approach to establish another sufficient condition for the unique existence of the solution for (\ref{FBSDEl}).

Similar to \cite{Bensoussan1}, we suppose that $g_t^l=K_t^l z_t^l+\Phi_t^l$ for each $l\in\{1,\ldots,d\}$. Then  \eqref{FBSDEl} leads to the following system of ODEs for $K^l$ and $\Phi^l$:
\begin{equation}\label{ODEs}
\left\{
\begin{split}
&\dot{K}_t^l-\frac{B^2}{2R}\lambda_l (K_t^l)^2+(2A-\frac{B^2}{R}f_t+D\lambda_l)K_t^l+Df_t-2QH=0,\quad K_T^l=-2Q_TH,\\
&\dot{\Phi}_t^l+(A-\frac{B^2}{2R}f_t-\frac{B^2}{2R}\lambda_lK_t^l)\Phi_t^l-2QH\eta\langle \mathds{1},f_l\rangle=0,\quad \Phi_T^l=-2Q_TH\eta\langle \mathds{1},f_l\rangle,\\
&q_t^{1l}=\Sigma_0\lambda_l\langle \mathds{1},f_l\rangle K_t^l,
\end{split}
\right.
\end{equation}
which has a unique solution under the following assumption \cite{Gao,Salhab}.
\begin{assumption}\label{assumptionriccati}
For each $l\in\{1,\ldots,d\}$, there exists a solution to the following Riccati equation on $[0,T]$
\begin{equation}\label{riccati}
  -\dot{K}_t^l=-\frac{B^2}{2R}\lambda_l (K_t^l)^2+\left(2A-\frac{B^2}{R}f_t+D\lambda_l\right)K_t^l+Df_t-2QH,\quad K_T^l=-2Q_TH.
\end{equation}
\end{assumption}
\begin{remark}
If Assumption \ref{assumptionriccati} is satisfied, then Riccati equation (\ref{riccati}) on $[0,T]$ has a unique solution due to the smoothness of the righthand side with respect to $K^l$ \cite[Section 2.4, Lemma 1]{Perko}.
\end{remark}
\begin{remark}
The relation between the two sufficient conditions (Assumptions \ref{monotonicity} and \ref{assumptionriccati}) will be studied in future work.
\end{remark}
Therefore, under Assumption \ref{assumptionriccati}, for each $l\in\{1,\ldots,d\}$, (\ref{FBSDEl}) has a unique solution $(z^l,g^l,q^{1l})\in L_{\mathcal{F}^{W^0}}^2(\Omega;C([0,T];\mathbb{R}))\times L_{\mathcal{F}^{W^0}}^2(\Omega;C([0,T];\mathbb{R}))\times L_{\mathcal{F}^{W^0}}^2(0,T;\mathbb{R})$, which also satisfies \eqref{bounded}. Moreover, (\ref{couplezgq}) can be represented by a decoupling form as follows: for all $\alpha\in [0,1]$, $t\in [0,T]$ and $\omega\in\Omega$,
\begin{equation}\label{decouplezgq}
\left\{
\begin{split}
&z_t^\alpha=\sum\limits_{l=1}^d z_t^l f_l(\alpha),\\
&g_t^\alpha=\sum\limits_{l=1}^d (K_t^l z_t^l+\Phi_t^l) f_l(\alpha)+\mathring{g}_t\left(\mathds{1}-\sum\limits_{l=1}^d \langle \mathds{1},f_l\rangle f_l\right)(\alpha),\\
&q_t^{1\alpha}=\sum\limits_{l=1}^d \Sigma_0\lambda_l\langle \mathds{1},f_l\rangle K_t^l f_l(\alpha).
\end{split}
\right.
\end{equation}

Finally, we state the main result in this section as follows.
\begin{theorem}
Under Assumptions \ref{finiteeigenvalues} and \ref{monotonicity} (or \ref{assumptionriccati}), we obtain a solution to the limit graphon mean field game problem as follows: for a.e. $\alpha\in[0,1]$,
\begin{equation}\label{controlalpha1}
  u_t^{\alpha,o}=-\frac{B}{2R}(f_tx_t^{\alpha,o}+g_t^\alpha),
\end{equation}
where $(g_t^\alpha)_{\alpha\in[0,1],t\in[0,T]}$ is given by (\ref{couplezgq}) or (\ref{decouplezgq}), and $f_t$ is given by (\ref{optimalx}).
\end{theorem}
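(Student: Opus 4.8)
The plan is to read this theorem as the final assembly of everything established in the section: the only thing left to certify is that, under the stated hypotheses, there genuinely exists a triple $(z,g,q^1)\in\mathcal{L}_c\times\mathcal{L}_c\times\mathcal{L}_L$ solving the fixed-point system \eqref{FBSDE*}, since Propositions \ref{proposition1} and \ref{proposition2} already show that such a triple \emph{is} a solution of the LQ-GMFG with feedback \eqref{controlalpha}. So first I would invoke Assumption \ref{finiteeigenvalues} together with Proposition \ref{prop3.3}: the finite-rank structure of the graphon operator lets me project the infinite-dimensional consistency equation onto the eigenbasis $\{f_l\}_{l=1}^d$, reducing \eqref{FBSDE*} to the $d$ scalar forward-backward systems \eqref{FBSDEl}. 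Proposition \ref{prop3.3} already guarantees that if each \eqref{FBSDEl} is uniquely solvable in $L_{\mathcal{F}^{W^0}}^2(\Omega;C([0,T];\mathbb{R}))\times L_{\mathcal{F}^{W^0}}^2(\Omega;C([0,T];\mathbb{R}))\times L_{\mathcal{F}^{W^0}}^2(0,T;\mathbb{R})$, then the reconstructed triple \eqref{couplezgq} lies in $\mathcal{L}_c\times\mathcal{L}_c\times\mathcal{L}_L$ and satisfies \eqref{FBSDE*}. Hence the entire burden reduces to solving the finitely many scalar FBSDEs.

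Second, I would establish the unique solvability of \eqref{FBSDEl} under each of the two alternative hypotheses. Under Assumption \ref{monotonicity}, the monotonicity condition places \eqref{FBSDEl} in the scope of Theorem 2.6 of Peng and Wu, yielding a unique adapted $L^2$-solution; the standard a priori estimate \eqref{bounded} then promotes this to the continuous space and secures membership in the required function spaces, as discussed immediately after Assumption \ref{monotonicity}. Under Assumption \ref{assumptionriccati}, I would instead use the decoupling ansatz $g_t^l=K_t^l z_t^l+\Phi_t^l$, which converts \eqref{FBSDEl} into the ODE system \eqref{ODEs}; solvability of the Riccati equation \eqref{riccati} delivers $K^l$, a linear ODE delivers $\Phi^l$, a forward linear SDE delivers $z^l$, and $q^{1l}$ is read off explicitly, producing the unique solution and simultaneously the decoupled representation \eqref{decouplezgq}. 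Either route supplies the scalar data needed by Proposition \ref{prop3.3}.

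Third, with $(z,g,q^1)$ satisfying \eqref{FBSDE*} in hand, I would close the loop through the two structural propositions. The first equation of \eqref{FBSDE*} is precisely identity \eqref{0}, so Proposition \ref{proposition2} guarantees that the consistency condition \eqref{consistency}, namely $z_t^\alpha=\int_0^1 M(\alpha,\beta)\varphi_t^\beta\,d\beta$ with $\varphi_t^\beta=E[x_t^{\beta,o}\,|\,\mathcal{F}_T^{W^0}]$, holds for a.e. $\alpha$; the second equation of \eqref{FBSDE*} is exactly \eqref{galpha}, so Proposition \ref{proposition1} certifies that for a.e. $\alpha$ the control $u_t^{\alpha,o}=-\frac{B}{2R}(f_t x_t^{\alpha,o}+g_t^\alpha)$ is optimal for the individual control problem \eqref{Jalphaoptimal}--\eqref{xalphaoptimal}. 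Since optimality of each generic agent and the consistency of the graphon mean field hold simultaneously, the feedback \eqref{controlalpha1} is a solution of the limit GMFG, which is the assertion. I do not expect a serious obstacle here, as the theorem is essentially a corollary of the preceding development; the only genuinely substantive inputs are the unique solvability of the scalar systems \eqref{FBSDEl} (handled by Peng--Wu monotonicity or by Riccati solvability) and the regularity bookkeeping ensuring the reassembled triple lands in $\mathcal{L}_c\times\mathcal{L}_c\times\mathcal{L}_L$, both of which are already discharged in Proposition \ref{prop3.3} and the estimates surrounding \eqref{bounded}.
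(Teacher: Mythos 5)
Your proposal is correct and follows exactly the paper's own route: the paper states this theorem without a separate proof precisely because it is assembled from Proposition \ref{prop3.3} (reduction to the scalar systems \eqref{FBSDEl} via the finite-rank eigenbasis), the unique solvability of \eqref{FBSDEl} under Assumption \ref{monotonicity} via Peng--Wu together with the estimate \eqref{bounded}, or under Assumption \ref{assumptionriccati} via the decoupling ansatz and \eqref{ODEs}, and then Propositions \ref{proposition1} and \ref{proposition2} to certify optimality and consistency. Nothing is missing.
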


\begin{remark}
By (\ref{couplezgq}), (\ref{bounded}) and (\ref{decouplezgq}),  it is easy to check that the feedback control $u_t^{\alpha,o}$ given by (\ref{controlalpha1}) is indeed admissible for a.e. $\alpha\in [0,1]$.
\end{remark}

\section{$\epsilon$-Nash equilibrium}
In this section, we construct an asymptotic Nash equilibrium (i.e. an $\epsilon$-Nash equilibrium) for Problem \ref{Knash} from the corresponding limit GMFG solutions. To begin with, we give the definition of $\epsilon$-Nash equilibrium as follows.
\begin{definition}\label{epsilonnashdefinition}
For $\epsilon \ge 0$, a set of strategies $(u^{o1},\ldots,u^{oK})$ is called an $\epsilon$-Nash equilibrium with respect to the costs $J_i$, $i\in \{1,\dots,K\}$, if for each $i$, it holds that
$$
J_i(u^{oi},u^{-oi})\leq J_i(v^i,u^{-oi})+\epsilon
$$
for any admissible control $v^i\in\mathcal{U}$.
\end{definition}

We adopt the construction method of $\epsilon$-Nash equilibrium appeared in \cite{Gao}. With the $N$-uniform partition $\{P_1,\ldots,P_N\}$ of $[0,1]$, the $K$-tuple strategies $(u^{o1},\ldots,u^{oK})$ are constructed as follows: for any agent $\mathcal{A}_i$ in $\mathcal{C}_q$,
\begin{equation}\label{epsilonnashstrategy}
\left\{\begin{split}
  &u_t^{oi}=-\frac{B}{2R}(f_tx_t^{oi}+\overline{g}_t^q),\\
  &\overline{g}_t^q\triangleq \frac{1}{\lambda_I(P_q)}\int_{P_q}g_t^\alpha d\alpha=\sum_{l=1}^d g_t^l N\int_{P_q} f_l(\alpha)d\alpha+\mathring{g}_t N\int_{P_q}\Big(\mathds{1}-\sum\limits_{l=1}^d \langle \mathds{1},f_l\rangle f_l\Big)(\alpha)d\alpha,
\end{split}\right.
\end{equation}
where $x^{oi}$ is the corresponding state, $g_t^\alpha$ is given by (\ref{couplezgq}) and (\ref{decouplezgq}), and $\lambda_I(P_q)=1/N$ denotes the Lebesgue measure of $P_q$. The strategy of agent $\mathcal{A}_i$ is decentralized, the agent only need to observe her own state and the common noise.

This section aims to prove that the set of strategies given by (\ref{epsilonnashstrategy}) is an $\epsilon$-Nash equilibrium. To this end, let $\mu^{[N]}$ be the piece-wise constant function in $L^2[0,1]$ with $N$-uniform partition of $[0,1]$ corresponding to $[\mu_1,\ldots,\mu_N]^T$ and $M^{[N]}$ be the step function type graphon corresponding to adjacency matrix $M_N$ of the underlying graph with $N$ nodes.

The following two assumptions will play a key role for deducing our main results.

\begin{assumption}\label{assumptionmu}
The sequence $\{\mu^{[N]}\}$ converges to $\mu$ defined by \eqref{mu} in the norm $\|\cdot\|_1$, i.e.,
$$
\lim\limits_{N\rightarrow\infty}\|\mu^{[N]}-\mu\|_1=0.
$$
\end{assumption}

\begin{assumption}\label{approximation}
The sequence $\{M^{[N]}\}$ and the limit graphon $M$ satisfy
$$
\lim\limits_{N\rightarrow\infty}\max\limits_{q\in\{1\ldots,N\}}\frac{1}{\lambda_I (P_q)}\|(M-M^{[N]})\mathds{1}_{P_q}\|_1=0.
$$
\end{assumption}

We would like to note that the convergence in the cut norm or the cut metric is inadequate for the $\epsilon$-Nash equilibrium analysis in this section. Indeed, as pointed out by Caines and Huang \cite{Caines}, the convergence in the cut norm or the cut metric does not capture sufficiently strong sectional information of the difference between step function type graphon corresponding to the finite graph and the limit graphon. Thus, we adopt a different convergence notion strengthening the sectional requirement as in Assumption \ref{approximation}.

\begin{remark}\label{approximationremark}
It is worth mentioning that Assumptions \ref{assumptionmu} and \ref{approximation} are weaker than the ones in \cite{Gao} due to the fact that $\|\cdot\|_1$ is weaker than $\|\cdot\|_2$.
\end{remark}

The following result indicates that the limit graphon $M$ is well determined under Assumption \ref{approximation}.
\begin{proposition}\label{graphonunique}
For the given sequence $\{M^{[N]}\}$, if there exists a graphon $M$ satisfying Assumption \ref{approximation}, then $M$ is unique.
\end{proposition}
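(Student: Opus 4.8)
The plan is to establish uniqueness directly: assume two graphons $M_1$ and $M_2$ both satisfy Assumption \ref{approximation} for the same sequence $\{M^{[N]}\}$, set $\Delta:=M_1-M_2$, and show $\Delta=0$ a.e.\ on $[0,1]^2$. Writing $\Delta=(M_1-M^{[N]})+(M^{[N]}-M_2)$ and applying the triangle inequality inside the $L^1$-norm, Assumption \ref{approximation} for each of $M_1$ and $M_2$ gives
$$
S_N:=\max_{1\le q\le N}\frac{1}{\lambda_I(P_q)}\|\Delta\,\mathds{1}_{P_q}\|_1\longrightarrow 0\qquad(N\to\infty).
$$
The key observation is that, since $(\Delta\,\mathds{1}_{P_q})(\alpha)=\int_{P_q}\Delta(\alpha,\beta)d\beta$ and $1/\lambda_I(P_q)=N$, the quantity $\frac{1}{\lambda_I(P_q)}\|\Delta\,\mathds{1}_{P_q}\|_1$ is exactly the $L^1_\alpha$-norm of the cellwise average $\alpha\mapsto N\int_{P_q}\Delta(\alpha,\beta)d\beta$ of $\Delta(\alpha,\cdot)$ over $P_q$; so the hypothesis forces all such averages to be uniformly small.

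Next I would test $\Delta$ against rectangles. Fix a measurable set $S\subseteq[0,1]$ and an interval $T=[a,b]$ with rational endpoints. For every $N$ that is a multiple of the common denominator of $a$ and $b$, the interval $T$ is a disjoint union $T=\bigcup_{l\in L}P_l$ of exactly $|L|=N\lambda_I(T)$ cells of the $N$-uniform partition, and then
$$
\Big|\int_S\!\int_T\!\Delta(\alpha,\beta)\,d\beta\,d\alpha\Big|\le\sum_{l\in L}\frac1N\int_0^1\Big|N\!\int_{P_l}\!\Delta(\alpha,\beta)d\beta\Big|d\alpha\le\frac{|L|}{N}\,S_N=\lambda_I(T)\,S_N\longrightarrow 0.
$$
Hence $\int_S\int_T\Delta=0$. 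Since $S$ is arbitrary, the function $\alpha\mapsto\int_T\Delta(\alpha,\beta)d\beta$ has vanishing integral over every measurable set, so it is zero for a.e.\ $\alpha$ (for each fixed rational-endpoint $T$).

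Finally I would intersect over the countable family of all rational-endpoint intervals $T$ to obtain a single full-measure set of $\alpha$ on which $\int_T\Delta(\alpha,\beta)d\beta=0$ simultaneously for all such $T$; for each such $\alpha$ the bounded function $\beta\mapsto\Delta(\alpha,\beta)$ then has zero integral over a dense collection of intervals, hence over every interval, hence equals $0$ a.e.\ in $\beta$ by absolute continuity of the indefinite integral and the Lebesgue differentiation theorem. Fubini's theorem then yields $\Delta=0$ a.e.\ on $[0,1]^2$, i.e.\ $M_1=M_2$ as graphons. The only delicate point, and the main obstacle, is the bookkeeping that aligns the fixed interval $T$ with the $N$-uniform partition: this is precisely why one restricts to rational endpoints and passes to the subsequence of $N$ divisible by the common denominator (along which $S_N\to0$ still holds), after which the cellwise estimate above is routine.
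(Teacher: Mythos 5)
Your proof is correct, but it takes a genuinely different route from the paper's. The paper never forms $\Delta=M_1-M_2$ directly; instead it first proves, for a single limit $M$ and \emph{arbitrary} measurable sets $\mathcal{S},\mathcal{T}\subset[0,1]$, that $\int_{\mathcal{S}\times\mathcal{T}}(M^{[N]}-M)\,d\alpha\,d\beta\to 0$. This requires the inner/outer approximation bookkeeping (open sets, finite unions of intervals, then the subcollection of partition cells contained in those intervals, each step costing an extra $\varepsilon$), after which the sum over selected cells is bounded by $E_N=N\max_i\|(M^{[N]}-M)\mathds{1}_{P_i}\|_1$. Applying this to two candidate limits gives $\int_{\mathcal{S}\times\mathcal{T}}(M-\hat M)=0$ for all rectangles, i.e.\ $\|M-\hat M\|_\Box=0$, which forces a.e.\ equality. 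You avoid all of that approximation by testing $\Delta$ only against rational-endpoint intervals $T$ in the $\beta$ variable, chosen so that $T$ is an exact union of cells along the subsequence of $N$ divisible by the common denominator; the cellwise bound $\lambda_I(T)S_N$ is then immediate, and you recover a.e.\ vanishing of $\Delta$ from the countable family of such $T$ via the Lebesgue differentiation theorem and Fubini. Your argument is shorter and cleaner for the uniqueness statement itself; the paper's argument proves the stronger intermediate fact that Assumption \ref{approximation} implies cut-norm convergence of $M^{[N]}$ to $M$ over all measurable rectangles, which is of independent interest. One small point worth making explicit in your write-up: $S_N\to 0$ holds along the full sequence by the triangle inequality applied to $\Delta=(M_1-M^{[N]})+(M^{[N]}-M_2)$, so it in particular holds along the arithmetic subsequence you pass to; and the a.e.\ sets of $\alpha$ obtained for different $T$ must be intersected before invoking differentiation, exactly as you indicate.
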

\begin{proof}
We prove the conclusion following the procedure in \cite{Caines} and postpone the details to the appendix.
\end{proof}

We have the following boundedness result for the eigenfunctions of the graphon.
\begin{lemma}\label{boundlemma}
Given a rank $d$ graphon $M(\cdot,\cdot)$, the corresponding eigenfunctions $f_l\in L^\infty [0,1]$, $l\in\{1,\cdots,d\}$, i.e. $\operatorname{ess\,sup}_{\alpha\in[0,1]}|f_l(\alpha)|\leq \frac{1}{\min_{l\in\{1,\cdots,d\}}|\lambda_l|}$ for $l\in\{1,\cdots,d\}$.
\end{lemma}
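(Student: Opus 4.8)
The plan is to read the pointwise bound straight off the eigenvalue relation that defines $f_l$, using nothing beyond the uniform boundedness of the graphon kernel and the normalization of the eigenfunctions. By Assumption \ref{finiteeigenvalues} together with the definition \eqref{graphonoperator} of the graphon operator, each eigenfunction satisfies $Mf_l=\lambda_l f_l$, i.e.
\begin{equation*}
\lambda_l f_l(\alpha)=\int_0^1 M(\alpha,\beta)f_l(\beta)\,d\beta\qquad\text{for a.e. }\alpha\in[0,1].
\end{equation*}
Since $\lambda_l\neq 0$, the only structural step is to divide by $\lambda_l$ and take absolute values.

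Next I would estimate the right-hand side. Because the graphon takes values in $[-1,1]$ under the convention adopted in this paper, one has $|M(\alpha,\beta)|\le 1$, and therefore, for a.e. $\alpha$,
\begin{equation*}
|f_l(\alpha)|=\frac{1}{|\lambda_l|}\left|\int_0^1 M(\alpha,\beta)f_l(\beta)\,d\beta\right|\le \frac{1}{|\lambda_l|}\int_0^1 |f_l(\beta)|\,d\beta=\frac{1}{|\lambda_l|}\,\|f_l\|_1 .
\end{equation*}
It then remains to control $\|f_l\|_1$, which I would bound by the Cauchy--Schwarz inequality and the orthonormality of $\{f_l\}_{l=1}^d$: namely $\|f_l\|_1\le \|f_l\|_2\,\|\mathds{1}\|_2=\|f_l\|_2=1$. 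Combining the two displays yields $|f_l(\alpha)|\le 1/|\lambda_l|$ for a.e. $\alpha$, and since $|\lambda_l|\ge \min_{1\le l\le d}|\lambda_l|>0$ this is dominated by the uniform constant $1/\min_{l}|\lambda_l|$, giving $\operatorname{ess\,sup}_{\alpha\in[0,1]}|f_l(\alpha)|\le 1/\min_{l}|\lambda_l|$ and in particular $f_l\in L^\infty[0,1]$.

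The argument is essentially immediate, so there is no serious obstacle; the one point worth stating with care is that the eigenvalue relation holds only almost everywhere, so what is produced is an essential-supremum bound rather than a genuine everywhere-pointwise bound. If a concrete bounded representative is wanted, I would observe that the map $\alpha\mapsto \tfrac{1}{\lambda_l}\int_0^1 M(\alpha,\beta)f_l(\beta)\,d\beta$ is a member of the equivalence class $f_l$ and satisfies the same estimate at every $\alpha$, which is consistent with the lemma's later use of $f_l(\cdot)$ as a genuine function in \eqref{couplezgq} and \eqref{epsilonnashstrategy}.
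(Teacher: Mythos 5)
Your proof is correct and follows essentially the same route as the paper: substitute the eigenvalue relation, bound $|M(\alpha,\beta)|\leq 1$, and then control $\|f_l\|_1$ by $\|f_l\|_2=1$ via Cauchy--Schwarz. Your closing remark about choosing the bounded representative $\alpha\mapsto\tfrac{1}{\lambda_l}\int_0^1 M(\alpha,\beta)f_l(\beta)\,d\beta$ is a small but worthwhile addition that the paper leaves implicit.
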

\begin{proof}
For $l\in\{1,\cdots,d\}$, since $\lambda_l f_l(\alpha)=\int_0^1 M(\alpha,\beta)f_l(\beta)d\beta$, a.e. $\alpha\in[0,1]$, there exists a set $E_l\subset [0,1]$ with $\lambda_I(E_l)=1$ such that for $\alpha\in E_l$, $f_l(\alpha)=\frac{1}{\lambda_l}\int_0^1 M(\alpha,\beta)f_l(\beta)d\beta$. Then, for $\alpha\in E_l$,
\begin{align*}
|f_l(\alpha)| & \leq \frac{1}{|\lambda_l|}\int_0^1 |M(\alpha,\beta)f_l(\beta)|d\beta \leq \frac{1}{|\lambda_l|}\int_0^1 |f_l(\beta)|d\beta\\
& \leq \frac{1}{|\lambda_l|}\Big(\int_0^1 |f_l(\beta)|^2 d\beta\Big)^\frac{1}{2}\leq \frac{1}{\min_{l\in\{1,\cdots,d\}}|\lambda_l|}.
\end{align*}
	
\end{proof}

For any $P_q\subset[0,1]$ with $q=1,\dots,N$, let
$$\overline{z}_t^q\triangleq \frac{1}{\lambda_I(P_q)}\int_{P_q}z_t^\alpha d\alpha,\quad
\overline{q}_t^{1q}\triangleq \frac{1}{\lambda_I(P_q)}\int_{P_q}q_t^{1\alpha} d\alpha,$$
where $z_t^\alpha$ and $q_t^{1\alpha}$ are solution of (\ref{FBSDE*}) and given by (\ref{couplezgq}) and (\ref{decouplezgq}). In what follows, $C$ will denote a generic constant, which may be different in line by line. The following lemma gives the estimation of $\overline{z}_t^q$ and $\overline{q}_t^{1q}$.
\begin{lemma}\label{bounded1} It holds that
$$
\max_{1\leq q\leq N}E\sup_{t\in [0,T]}|\overline{z}_t^q|^2\leq C,\quad \max_{1\leq q\leq N}E\sup_{t\in [0,T]}|\overline{g}_t^q|^2\leq C,
$$
where $C$ is a constant independent of $N$.
\end{lemma}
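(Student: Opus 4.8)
The plan is to exploit the finite-rank spectral representations in (\ref{couplezgq}) to reduce both quantities to finite linear combinations of the per-mode processes $z^l,g^l$ (and the deterministic process $\mathring g$), whose estimates are already supplied by (\ref{bounded}) and are independent of $N$. The only $N$-dependence enters through the averaging weights
$$
c_{q,l}\triangleq \frac{1}{\lambda_I(P_q)}\int_{P_q} f_l(\alpha)\,d\alpha = N\int_{P_q} f_l(\alpha)\,d\alpha,
$$
so the whole argument hinges on showing these weights are bounded uniformly in both $q$ and $N$.

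First I would treat $\overline z^q$. Inserting $z_t^\alpha=\sum_{l=1}^d z_t^l f_l(\alpha)$ and interchanging the finite sum with the integral gives $\overline z_t^q=\sum_{l=1}^d c_{q,l}\,z_t^l$. By Lemma \ref{boundlemma} we have $|f_l(\alpha)|\le (\min_{l}|\lambda_l|)^{-1}=:C_f$ for a.e.\ $\alpha$, whence $|c_{q,l}|\le N\int_{P_q}|f_l|\,d\alpha\le N\lambda_I(P_q)C_f=C_f$, uniformly in $q$ and $N$. Applying Cauchy--Schwarz over the $d$ modes yields $|\overline z_t^q|^2\le C_f^2\,d\sum_{l=1}^d |z_t^l|^2$; taking $E\sup_{t}$ and using the $N$-free bound $E\sup_t|z_t^l|^2\le C$ from (\ref{bounded}) gives $E\sup_t|\overline z_t^q|^2\le C_f^2 d^2 C$, a constant independent of $q$ and $N$, so the maximum over $q$ is harmless.

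For $\overline g^q$ I would split the representation (\ref{epsilonnashstrategy}) into its modal part and its $\mathring g$-part. The modal part $\sum_{l=1}^d c_{q,l}\,g_t^l$ is estimated exactly as above, now invoking $E\sup_t|g_t^l|^2\le C$ from (\ref{bounded}). For the remaining term, the weight $N\int_{P_q}\big(\mathds{1}-\sum_{l}\langle\mathds{1},f_l\rangle f_l\big)\,d\alpha$ is bounded by $1+\sum_{l=1}^d|\langle\mathds{1},f_l\rangle|\,C_f\le 1+dC_f$ (using $|\langle\mathds{1},f_l\rangle|\le\|f_l\|_2=1$ together with the bound on $c_{q,l}$), again uniformly in $q$ and $N$, while $\mathring g$ solves the deterministic linear ODE (\ref{mathringg}) on the fixed interval $[0,T]$ and is therefore a bounded continuous function, $\sup_t|\mathring g_t|\le C_{\mathring g}$. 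Combining the two pieces with the elementary inequality $(a+b)^2\le 2a^2+2b^2$ and taking $E\sup_t$ produces the desired $N$-independent bound on $E\sup_t|\overline g_t^q|^2$.

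The substantive point, and the only place where genuine analysis is required, is the uniform boundedness of the weights $c_{q,l}$: a priori the factor $N=1/\lambda_I(P_q)$ could blow up, and it is precisely the essential-supremum control of the eigenfunctions from Lemma \ref{boundlemma} (itself a consequence of the finite-rank Assumption \ref{finiteeigenvalues}) that absorbs it. Once that is in hand, everything reduces to Cauchy--Schwarz together with the already-established, $N$-free estimates (\ref{bounded}) on the finitely many modes and the boundedness of $\mathring g$.
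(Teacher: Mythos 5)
Your proposal is correct and follows essentially the same route as the paper: both arguments expand $\overline{z}^q$ and $\overline{g}^q$ via the finite-rank representation, control the averaging weights $N\int_{P_q}f_l\,d\alpha$ through the $L^\infty$ bound on the eigenfunctions from Lemma \ref{boundlemma}, apply Cauchy--Schwarz over the $d$ modes, and invoke the $N$-free estimate (\ref{bounded}) together with the boundedness of $\mathring g$. The only cosmetic difference is that the paper pushes Cauchy--Schwarz inside the integral to produce $N\int_{P_q}|f_l|^2\,d\alpha$ before applying Lemma \ref{boundlemma}, whereas you bound the weights $c_{q,l}$ directly; the two are interchangeable.
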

\begin{proof}
By Lemma \ref{boundlemma}, one has
$$
|\overline{z}_t^q|^2\leq d\sum_{l=1}^d |z_t^l|^2 N\int_{P_q}|f_l(\alpha)|^2d\alpha \leq C\sum_{l=1}^d |z_t^l|^2.
$$
Then, it holds that
$$
E\sup_{0\leq t\leq T}|\overline{z}_t^q|^2\leq C\sum_{l=1}^d E\sup_{0\leq t\leq T}|z_t^l|^2\leq C.
$$
Similarly, we have
$$
E\sup_{0\leq t\leq T}|\overline{g}_t^q|^2\leq C\Big(\sum_{l=1}^d E\sup_{0\leq t\leq T}|g_t^l|^2+\sup_{0\leq t\leq T}|\mathring{g}_t|^2\Big)\leq C.
$$
This complete the proof.
\end{proof}

By (\ref{FBSDEl}), (\ref{couplezgq}), it's easy to show that $(\overline{z}_t^q,\overline{g}_t^q,\overline{q}_t^{1q})$ is the unique solution of the following (decoupled) FBSDE:
\begin{equation}\label{overlinez}
\left\{ \begin{split}
    \overline{z}_t^q=&N\int_{P_q}(M\mu)(\alpha)d\alpha+\int_0^t\Big[(A-\frac{B^2}{2R}f_s)\overline{z}_s^q+N\int_{P_q}D(Mz_s)(\alpha)d\alpha\Big.\\
    &\Big.-\frac{B^2}{2R}N\int_{P_q}(Mg_s)(\alpha)d\alpha\Big]ds+\int_0^tN\int_{P_q}\Sigma_0(M\mathds{1})(\alpha)d\alpha dW_s^0,\\
  \overline{g}_t^q=&-2Q_TH(\overline{z}_T^q+\eta)+\int_t^T\Big[(A-\frac{B^2}{2R}f_s)\overline{g}_s^q-(2QH-Df_s)\overline{z}_s^q-2QH\eta\Big]ds\\
  &-\int_t^T\overline{q}_s^{1q}dW_s^0.
  \end{split}\right.
\end{equation}

In order to show that the set of strategies given by (\ref{epsilonnashstrategy}) is an $\epsilon$-Nash equilibrium, we consider the following auxiliary optimal control problem for agent $\mathcal{A}_i$ in $\mathcal{C}_q$.
\begin{problem}\label{limitcontrol}
To find an optimal control $\overline{u}^i$ of agent $\mathcal{A}_i$ in $\mathcal{C}_q$ such that
\begin{equation*}
  J_i^*(\overline{u}^i)=\inf\limits_{u^i\in\mathcal{U}}E\left\{\int_0^T\left[Q(y_t^i-\overline{\nu}_t^q)^2+R(u_t^i)^2\right]dt+Q_T(y_T^i-\overline{\nu}_T^q)^2\right\},\quad \overline{\nu}_t^q\triangleq H(\overline{z}_t^q+\eta)
\end{equation*}
subject to the following limiting system
\begin{equation}\label{y^i}
\left\{
  \begin{split}
     &dy_t^i=(Ay_t^i+Bu_t^i+D\overline{z}_t^q)dt+\Sigma dw_t^i+\Sigma_0dW_t^0, \\
      &y_0^i=x_0^i.
  \end{split}
  \right.
\end{equation}
\end{problem}

Similar to the proof of Proposition \ref{proposition1}, we have the following result.
\begin{proposition}
There exists an optimal control of the agent $\mathcal{A}_i$ for Problem \ref{limitcontrol} with the form
$\overline{u}_t^i=-\frac{B}{2R}(f_t\overline{y}_t^i+\overline{g}_t^q)$
and the corresponding optimal state $\overline{y}^i$ satisfies
\begin{equation}\label{overliney^i}
\left\{
  \begin{split}
     &d\overline{y}_t^i=\left[\left(A-\frac{B^2}{2R}f_t\right)\overline{y}_t^i-\frac{B^2}{2R}\overline{g}_t^q+D\overline{z}_t^q\right]dt+\Sigma dw_t^i+\Sigma_0dW_t^0, \\
      &\overline{y}_0^i=x_0^i.
  \end{split}
  \right.
\end{equation}
\end{proposition}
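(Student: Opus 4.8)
The plan is to follow the proof of Proposition \ref{proposition1} almost verbatim, the only structural differences being that the mean field input is now the (piecewise constant in $\alpha$) process $\overline{z}^q$ rather than $z^\alpha$, and that the admissible controls are adapted to the full filtration $\{\mathcal{F}_t\}$ rather than $\{\mathcal{F}_t^\alpha\}$. First I would associate Problem \ref{limitcontrol} with its stochastic Hamiltonian system, namely the FBSDE
\begin{equation*}
\left\{
\begin{split}
&0=2R\overline{u}_t^i+B\overline{P}_t^i,\\
&d\overline{y}_t^i=(A\overline{y}_t^i+B\overline{u}_t^i+D\overline{z}_t^q)dt+\Sigma dw_t^i+\Sigma_0 dW_t^0,\\
&d\overline{P}_t^i=-[A\overline{P}_t^i+2Q(\overline{y}_t^i-\overline{\nu}_t^q)]dt+\overline{q}_t^i dw_t^i+\overline{q}_t^{0i} dW_t^0,\\
&\overline{y}_0^i=x_0^i,\quad \overline{P}_T^i=2Q_T(\overline{y}_T^i-\overline{\nu}_T^q),
\end{split}
\right.
\end{equation*}
with adjoint process $\overline{P}^i$ and square-integrable martingale integrands $\overline{q}^i,\overline{q}^{0i}$.

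Next, mirroring the first half of the proof of Proposition \ref{proposition1}, I would show that any adapted solution of this FBSDE yields an optimal pair. For an arbitrary admissible $u^i\in\mathcal{U}$ with corresponding state $y^i$, convexity of the running and terminal costs (recall $Q,Q_T\ge 0$ and $R>0$) gives
\begin{equation*}
J_i^*(\overline{u}^i)-J_i^*(u^i)\le E\int_0^T\big[2Q(\overline{y}_t^i-y_t^i)(\overline{y}_t^i-\overline{\nu}_t^q)+2R\overline{u}_t^i(\overline{u}_t^i-u_t^i)\big]dt+2Q_T E(\overline{y}_T^i-y_T^i)(\overline{y}_T^i-\overline{\nu}_T^q),
\end{equation*}
and applying It\^{o}'s formula to $\overline{P}_t^i(\overline{y}_t^i-y_t^i)$ on $[0,T]$, together with the stationarity relation $2R\overline{u}_t^i+B\overline{P}_t^i=0$, shows the right-hand side vanishes. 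Hence $J_i^*(\overline{u}^i)\le J_i^*(u^i)$, so $(\overline{y}^i,\overline{u}^i)$ is optimal.

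It remains to exhibit an adapted solution, which I would do by construction. I reuse the Riccati solution $f_t$ from \eqref{optimalx} and the component $\overline{g}^q$ of the averaged FBSDE \eqref{overlinez}, let $\overline{y}^i$ solve \eqref{overliney^i}, and set $\overline{P}_t^i\triangleq f_t\overline{y}_t^i+\overline{g}_t^q$. The crux is then a single It\^{o} computation: differentiating $f_t\overline{y}_t^i+\overline{g}_t^q$ and substituting the drift of $\overline{g}^q$ read off from \eqref{overlinez}, the coefficient of $\overline{y}_t^i$ collapses to $-Af_t-2Q$ precisely because $f$ solves $\dot f_t-\tfrac{B^2}{2R}f_t^2+2Af_t+2Q=0$; the $\overline{g}_t^q$-terms combine to $-A\overline{g}_t^q$; and the $\overline{z}_t^q$-terms together with the constant assemble into $2QH(\overline{z}_t^q+\eta)=2Q\overline{\nu}_t^q$. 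This reproduces exactly the adjoint drift $-[A\overline{P}_t^i+2Q(\overline{y}_t^i-\overline{\nu}_t^q)]$, with diffusion integrands $\overline{q}_t^i=\Sigma f_t$ and $\overline{q}_t^{0i}=\Sigma_0 f_t+\overline{q}_t^{1q}$, while the terminal data match since $\overline{P}_T^i=f_T\overline{y}_T^i+\overline{g}_T^q=2Q_T\overline{y}_T^i-2Q_TH(\overline{z}_T^q+\eta)=2Q_T(\overline{y}_T^i-\overline{\nu}_T^q)$ using $f_T=2Q_T$. Thus $(\overline{y}^i,\overline{P}^i,\Sigma f_t,\Sigma_0 f_t+\overline{q}_t^{1q})$ solves the FBSDE and $\overline{u}_t^i=-\tfrac{B}{2R}(f_t\overline{y}_t^i+\overline{g}_t^q)$ is the claimed optimal control.

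The main obstacle—though ultimately routine—is this algebraic consistency check in the It\^{o} step, as it is the only place where the specific drift and terminal condition of the $\overline{g}^q$-equation in \eqref{overlinez} are invoked; everything else is formally identical to Proposition \ref{proposition1}. The required integrability, ensuring $(\overline{y}^i,\overline{P}^i)\in L_{\mathcal{F}}^2(\Omega;C([0,T];\mathbb{R}))$, follows from the a priori bounds of Lemma \ref{bounded1} for $\overline{z}^q$ and $\overline{g}^q$ together with standard linear-SDE estimates applied to \eqref{overliney^i}.
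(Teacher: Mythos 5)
Your proposal is correct and takes exactly the route the paper intends: the paper gives no details here, stating only that the result follows "similar to the proof of Proposition \ref{proposition1}," and your argument is precisely that adaptation, with the Hamiltonian system, the convexity-plus-It\^{o} optimality check, and the construction $\overline{P}_t^i=f_t\overline{y}_t^i+\overline{g}_t^q$ all verified against the drift and terminal data of \eqref{overlinez}. Your algebra checks out (and your diffusion coefficient $\Sigma_0 f_t+\overline{q}_t^{1q}$ for the $dW^0$ term is in fact the corrected version of what appears, with an apparent typo, in the paper's proof of Proposition \ref{proposition1}).
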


Now we continue to prove that the set of strategies given by (\ref{epsilonnashstrategy}) is an $\epsilon$-Nash equilibrium. For given strategy (\ref{epsilonnashstrategy}), it follows from \eqref{1} that the closed-loop system of agent $\mathcal{A}_i$ can be described as follows
\begin{equation}\label{epsilonnashstate}
\left\{
  \begin{split}
     &dx_t^{oi}=\left[\left(A-\frac{B^2}{2R}f_t\right)x_t^{oi}-\frac{B^2}{2R}\overline{g}_t^q+Dz_t^{oi}\right]dt+\Sigma dw_t^i+\Sigma_0dW_t^0, \\
      &x_0^{oi}=x_0^i
  \end{split}
  \right.
\end{equation}
where
$$
z_t^{oi}\triangleq\frac{1}{N}\sum\limits_{l=1}^N m_{ql}\frac{1}{|\mathcal{C}_l|}\sum\limits_{j\in\mathcal{C}_l}x_t^{oj}.
$$

Since $z_t^{oi}=z_t^{oj}$ for any $i,j\in \mathcal{C}_q$, denoting $z_t^{oq}=z_t^{oi}$ for all $i\in \mathcal{C}_q$,  we have the following estimation results for $z_t^{oq}$ and $x_t^{oi}$.
\begin{lemma}\label{bounded2} It holds that
$$
\max\limits_{1\leq q\leq N}\sup\limits_{0\leq t\leq T}E|z_t^{oq}|^2\leq C,\quad \max\limits_{1\leq i\leq K}\sup\limits_{0\leq t\leq T}E|x_t^{oi}|^2\leq C,
$$
where $C$ is a constant independent of $N$ and $K$.
\end{lemma}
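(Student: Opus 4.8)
The plan is to establish the two uniform-in-$N$ (and uniform-in-$K$) second-moment bounds separately, starting with the nodal averages $z_t^{oq}$ and then passing to the individual states $x_t^{oi}$. First I would derive a closed (finite) system for the nodal averages. Fix a node $q$ and average the closed-loop dynamics \eqref{epsilonnashstate} over all agents in $\mathcal{C}_q$; since $z_t^{oi}=z_t^{oj}$ for $i,j\in\mathcal{C}_q$ and each $\overline{g}_t^q$ depends only on the node index, the drift terms average cleanly, and the idiosyncratic noise terms $\Sigma\,dw_t^i$ average to something whose conditional variance vanishes as $|\mathcal{C}_l|\to\infty$. Writing $\overline{x}_t^{q}\triangleq \frac{1}{|\mathcal{C}_q|}\sum_{j\in\mathcal{C}_q}x_t^{oj}$ and using the definition $z_t^{oq}=\frac{1}{N}\sum_{l=1}^N m_{ql}\,\overline{x}_t^{l}$ together with $|m_{ql}|\le 1$, I would obtain a system of SDEs for the vector $(\overline{x}_t^{1},\ldots,\overline{x}_t^{N})$ whose coefficients are bounded uniformly in $N$ (the only place $N$ enters the drift is through the normalized coupling $\frac{D}{N}\sum_l m_{ql}$, which is uniformly bounded because $\frac{1}{N}\sum_l|m_{ql}|\le 1$).

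Next I would run a standard energy/Gronwall estimate on this averaged system. Applying It\^o's formula to $|\overline{x}_t^{q}|^2$ (or directly to $E|\overline{x}_t^{q}|^2$), using Young's inequality to absorb the cross terms, the boundedness of $f_t$ on $[0,T]$ (it solves the Riccati ODE in \eqref{optimalx}), and the bound $\max_q E\sup_t|\overline{g}_t^q|^2\le C$ from Lemma \ref{bounded1}, I expect an inequality of the schematic form
\begin{equation*}
\frac{d}{dt}E|\overline{x}_t^{q}|^2 \le C_1\, E|\overline{x}_t^{q}|^2 + C_2\,\max_{1\le l\le N}E|\overline{x}_t^{l}|^2 + C_3,
\end{equation*}
where $C_1,C_2,C_3$ are independent of $N$ and $K$; the middle term arises from the coupling $Dz_t^{oq}$ after bounding $|z_t^{oq}|^2\le \big(\frac{1}{N}\sum_l|m_{ql}|\big)\frac{1}{N}\sum_l|\overline{x}_t^{l}|^2\le \max_l|\overline{x}_t^{l}|^2$. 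Taking the maximum over $q$ on both sides and applying Gronwall's inequality to $t\mapsto\max_q E|\overline{x}_t^{q}|^2$ yields $\max_q\sup_{t}E|\overline{x}_t^{q}|^2\le C$, and hence $\max_q\sup_t E|z_t^{oq}|^2\le C$, with $C$ independent of $N$. The initial data contributes a bounded term because $|Ex_0^i|\le C_\mu$ and $\mathrm{Var}(x_0^i)\le C_\sigma$ uniformly.

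Finally, with the uniform bound on $z_t^{oq}$ in hand, I would return to the individual dynamics \eqref{epsilonnashstate} for a single agent $\mathcal{A}_i\in\mathcal{C}_q$. Now $z_t^{oi}=z_t^{oq}$ is a known forcing term with $\sup_t E|z_t^{oq}|^2\le C$, and $\overline{g}_t^q$ is likewise controlled by Lemma \ref{bounded1}, so \eqref{epsilonnashstate} is a scalar linear SDE with bounded coefficients and square-integrable inhomogeneous terms. A second It\^o-plus-Gronwall estimate on $E|x_t^{oi}|^2$, again using the boundedness of $f_t$ and the initial-moment bounds, gives $\max_{1\le i\le K}\sup_t E|x_t^{oi}|^2\le C$ uniformly. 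The main obstacle is the first step: the individual states are coupled across all nodes through $z_t^{oi}$, so one cannot estimate a single $x_t^{oi}$ in isolation; the key device is to decouple by first closing the estimate on the $N$-dimensional vector of nodal averages and verifying that the coupling constant $\frac{1}{N}\sum_l|m_{ql}|$ stays bounded by $1$, which is precisely what keeps the Gronwall constant independent of $N$.
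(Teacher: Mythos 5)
Your proposal is correct and follows essentially the same two-stage strategy as the paper's proof: first close a Gronwall estimate on an aggregated quantity dominating $|z_t^{oq}|^2$ (the paper squares the individual SDEs and Gronwalls the double average $\frac{1}{N}\sum_{l}\frac{1}{|\mathcal{C}_l|}\sum_{j\in\mathcal{C}_l}|x_t^{oj}|^2$, whereas you square the nodal averages and take a maximum over nodes --- a minor, equally valid variation), then feed the resulting uniform bound on $z^{oq}$ and the bound on $\overline{g}^q$ from Lemma \ref{bounded1} back into the single-agent equation via a second Gronwall argument. In both versions the uniformity in $N$ and $K$ rests on $|m_{ql}|\leq 1$ together with the BDG/It\^{o}-isometry control of the martingale terms, exactly as you identify.
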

\begin{proof}
For any $q$ with $1\leq q\leq N$, one has
$$
|z_t^{oq}|^2=\left|\frac{1}{N}\sum\limits_{l=1}^N m_{ql}\frac{1}{|\mathcal{C}_l|}\sum\limits_{j\in\mathcal{C}_l}x_t^{oj}\right|^2\leq \frac{1}{N}\sum\limits_{l=1}^N \frac{1}{|\mathcal{C}_l|}\sum\limits_{j\in\mathcal{C}_l}|x_t^{oj}|^2.
$$
Taking square on both sides of (\ref{epsilonnashstate}) leads to
\begin{equation}\label{square}
  |x_t^{oi}|^2\leq C_1\left[|x_0^i|^2+\int_0^t\left(|x_s^{oi}|^2+|\overline{g}_s^q|^2+|z_s^{oq}|^2\right)ds+\left|\int_0^t\Sigma dw_s^i\right|^2+\left|\int_0^t\Sigma_0 dW_s^0\right|^2\right]
\end{equation}
and so
\begin{equation*}
  \begin{split}
    \frac{1}{N}\sum\limits_{l=1}^N \frac{1}{|\mathcal{C}_l|}\sum\limits_{j\in\mathcal{C}_l}|x_t^{oj}|^2 & \leq C_1\left[\frac{1}{N}\sum\limits_{l=1}^N \frac{1}{|\mathcal{C}_l|}\sum\limits_{j\in\mathcal{C}_l}|x_0^j|^2
+\int_0^t \left(\frac{1}{N}\sum\limits_{l=1}^N \frac{1}{|\mathcal{C}_l|}\sum\limits_{j\in\mathcal{C}_l}|x_s^{oj}|^2
+\frac{1}{N}\sum\limits_{l=1}^N |\overline{g}_s^l|^2\right) ds\right.\\
&\left.+\frac{1}{N}\sum\limits_{l=1}^N \frac{1}{|\mathcal{C}_l|}\sum\limits_{j\in\mathcal{C}_l}\left|\int_0^t\Sigma dw_s^j\right|^2
+\left|\int_0^t\Sigma_0 dW_s^0\right|^2\right].
  \end{split}
\end{equation*}
By Gronwall's inequality, we obtain
\begin{equation*}
  \begin{split}
    E\left(\frac{1}{N}\sum\limits_{l=1}^N \frac{1}{|\mathcal{C}_l|}\sum\limits_{j\in\mathcal{C}_l}|x_t^{oj}|^2\right)
    &\leq C_1\left[\frac{1}{N}\sum\limits_{l=1}^N \frac{1}{|\mathcal{C}_l|}\sum\limits_{j\in\mathcal{C}_l}E|x_0^j|^2
    +E\int_0^T\frac{1}{N}\sum\limits_{l=1}^N |\overline{g}_s^l|^2 ds\right.\\
    &\left.+\frac{1}{N}\sum\limits_{l=1}^N \frac{1}{|\mathcal{C}_l|}\sum\limits_{j\in\mathcal{C}_l}E\sup\limits_{0\leq t\leq T}\left|\int_0^t\Sigma dw_s^j\right|^2
+E\sup\limits_{0\leq t\leq T}\left|\int_0^t\Sigma_0 dW_s^0\right|^2\right].
  \end{split}
\end{equation*}
It follows from the BDG inequality \cite{Revuz} and Lemma \ref{bounded1} that
$$
E|z_t^{oq}|^2\leq E\left(\frac{1}{N}\sum\limits_{l=1}^N \frac{1}{|\mathcal{C}_l|}\sum\limits_{j\in\mathcal{C}_l}|x_t^{oj}|^2\right)\leq C,
$$
where $C$ is independent of $N$ and $K$. Then we have the first conclusion. Taking expectation on both sides of (\ref{square}) and applying the Gronwall's inequality again, we derive that
\begin{equation*}
  \begin{split}
     E|x_t^{oi}|^2\leq& C_1\left[E|x_0^i|^2+\int_0^tE|x_s^{oi}|^2ds +E\int_0^T\left(|\overline{g}_s^q|^2+|z_s^{oq}|^2\right)ds\right.\\
     &\left.+E\sup\limits_{0\leq t\leq T}\left|\int_0^t\Sigma dw_s^i\right|^2+E\sup\limits_{0\leq t\leq T}\left|\int_0^t\Sigma_0 dW_s^0\right|^2\right]\\
     \leq& C_1\left[E|x_0^i|^2+E\int_0^T \left(|\overline{g}_s^q|^2+|z_s^{oq}|^2+\Sigma^2+\Sigma_0^2\right)ds\right].
   \end{split}
\end{equation*}
By the boundedness of $\overline{g}^q$ and $z^{oq}$, we can deduce the second conclusion.
\end{proof}

Next we show the following lemma.

\begin{lemma}\label{norminfty}
If $A_N=[a_{ij}]\in \mathbb{R}^{N\times N}$ satisfies $|a_{ij}|\leq C_a$ for all $i, j\in\{1,\ldots,N\}$, then $\left\|e^{\frac{\gamma}{N}A_N}\right\|_\infty\leq e^{|\gamma|C_a}$ for all $\gamma\in \mathbb{R}$.
\end{lemma}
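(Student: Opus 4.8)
The plan is to reduce the claim to a single estimate on the operator norm $\|\cdot\|_\infty$ and then insert it into the power series of the matrix exponential. The point to keep in mind is that $\|\cdot\|_\infty$, being the matrix norm induced by the vector $\ell^\infty$ norm, coincides with the maximum absolute row sum, $\|B\|_\infty=\max_{1\le i\le N}\sum_{j=1}^N|b_{ij}|$, and is submultiplicative, $\|BC\|_\infty\le\|B\|_\infty\,\|C\|_\infty$.

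First I would bound the row-sum norm of $A_N$ directly from the entrywise hypothesis: since $|a_{ij}|\le C_a$, each of the $N$ terms in any row sum is at most $C_a$, so $\|A_N\|_\infty\le N C_a$. The decisive observation is that the $1/N$ prefactor in $\frac{\gamma}{N}A_N$ exactly compensates this $N$, giving $\|\frac{\gamma}{N}A_N\|_\infty=\frac{|\gamma|}{N}\|A_N\|_\infty\le|\gamma|C_a$, a bound that is uniform in $N$.

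Next I would expand the exponential as $e^{\frac{\gamma}{N}A_N}=\sum_{k=0}^{\infty}\frac{1}{k!}\bigl(\frac{\gamma}{N}A_N\bigr)^k$, a series that converges absolutely in $\|\cdot\|_\infty$. Applying the triangle inequality and submultiplicativity term by term yields
\begin{equation*}
\Bigl\|e^{\frac{\gamma}{N}A_N}\Bigr\|_\infty\le\sum_{k=0}^{\infty}\frac{1}{k!}\Bigl\|\tfrac{\gamma}{N}A_N\Bigr\|_\infty^{\,k}=\exp\Bigl(\Bigl\|\tfrac{\gamma}{N}A_N\Bigr\|_\infty\Bigr)\le e^{|\gamma|C_a},
\end{equation*}
where the last inequality uses the estimate from the previous paragraph together with the monotonicity of the scalar exponential. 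This is precisely the claimed bound.

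There is no substantive obstacle here: the entire content is the cancellation of the $N$ factor and the standard submultiplicative estimate $\|e^B\|_\infty\le e^{\|B\|_\infty}$. The only points requiring minor care are invoking the correct characterization of the induced norm $\|\cdot\|_\infty$ as the maximum absolute row sum, and observing that the resulting bound $|\gamma|C_a$ is independent of $N$, which is exactly the feature that makes the lemma usable in the subsequent $\epsilon$-Nash estimates.
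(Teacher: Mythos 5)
Your proof is correct. It takes a genuinely different (and more standard) route than the paper's. You work at the level of the induced operator norm: you bound $\|A_N\|_\infty\le NC_a$ by the maximum-absolute-row-sum characterization, note that the $1/N$ prefactor cancels the $N$, and then invoke submultiplicativity term by term in the exponential series to get $\|e^{B}\|_\infty\le e^{\|B\|_\infty}$. The paper instead works entrywise: it expands the series, bounds each entry of $A_N^k$ by $N^{k-1}C_a^k$, and derives the separate estimates $|[e^{\frac{\gamma}{N}A_N}]_{ii}|\le \frac{1}{N}e^{|\gamma|C_a}+1-\frac{1}{N}$ and $|[e^{\frac{\gamma}{N}A_N}]_{ij}|\le \frac{1}{N}e^{|\gamma|C_a}-\frac{1}{N}$ for $i\neq j$, which sum along a row to exactly $e^{|\gamma|C_a}$. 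Your argument is shorter and avoids the diagonal/off-diagonal case split entirely; the paper's entrywise computation yields the additional (unused in the sequel) information that every individual entry of $e^{\frac{\gamma}{N}A_N}$ deviates from the corresponding entry of the identity by at most $\frac{1}{N}(e^{|\gamma|C_a}-1)$. Both arrive at the same constant, and since the paper ultimately only uses the row-sum norm bound (e.g.\ in the estimates of $\|\Phi(t)\Phi(s)^{-1}\|_\infty$), your version fully suffices for the downstream applications.
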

\begin{proof}
For the case $i=j$, one has
\begin{equation*}
  \begin{split}
    \left|\left[e^{\frac{\gamma}{N}A_N}\right]_{ii}\right|&=\left|\left[I_N+\frac{\gamma}{N}A_N+\frac{1}{2!}\frac{\gamma^2}{N^2}A_N^2+\cdots\right]_{ii}\right|\\
    &\leq 1+\frac{|\gamma|}{N}|[A_N]_{ii}|+\frac{1}{2!}\frac{|\gamma|^2}{N^2}|[A_N^2]_{ii}|+\cdots\\
    &\leq 1+\frac{|\gamma|}{N}C_a+\frac{1}{2!}\frac{|\gamma|^2}{N}C_a^2+\frac{1}{3!}\frac{|\gamma|^3}{N}C_a^3+\cdots\\
    &\leq \frac{1}{N}\left(1+|\gamma|C_a+\frac{1}{2!}|\gamma|^2C_a^2+\frac{1}{3!}|\gamma|^3C_a^3+\cdots\right)+1-\frac{1}{N}\\
    &=\frac{1}{N}e^{|\gamma|C_a}+1-\frac{1}{N}.
  \end{split}
\end{equation*}
For the case $i\neq j$, we have
\begin{equation*}
  \begin{split}
   \left|\left[e^{\frac{\gamma}{N}A_N}\right]_{ij}\right|&=\left|\left[I_N+\frac{\gamma}{N}A_N+\frac{1}{2!}\frac{\gamma^2}{N^2}A_N^2+\cdots\right]_{ij}\right|\\
   &\leq \frac{|\gamma|}{N}|[A_N]_{ij}|+\frac{1}{2!}\frac{|\gamma|^2}{N^2}|[A_N^2]_{ij}|+\cdots\\
   &\leq \frac{|\gamma|}{N}C_a+\frac{1}{2!}\frac{|\gamma|^2}{N}C_a^2+\frac{1}{3!}\frac{|\gamma|^3}{N}C_a^3+\cdots\\
   &=\frac{1}{N}e^{|\gamma|C_a}-\frac{1}{N}.
  \end{split}
\end{equation*}
Therefore
$$
\left\|e^{\frac{\gamma}{N}A_N}\right\|_\infty=\max\limits_i\sum\limits_{j=1}^N\left|\left[e^{\frac{\gamma}{N}A_N}\right]_{ij}\right|\leq e^{|\gamma|C_a}.
$$
\end{proof}
For convenience, denote $$E_N\triangleq\max\limits_{q\in\{1\ldots,N\}}\frac{1}{\lambda_I (P_q)}\|(M-M^{[N]})\mathds{1}_{P_q}\|_1, \quad E_N'\triangleq\|\mu^{[N]}-\mu\|_1, \quad \delta_K\triangleq {E_N}^2+(E_N')^2+\frac{1}{\min_{1\leq l\leq N}|\mathcal{C}_l|}.$$ By Assumptions \ref{assumptionmu} and \ref{approximation}, one has $E_N,E_N'\rightarrow 0$, as $N\rightarrow \infty$, which means $\delta_K\rightarrow 0$, as $N\rightarrow\infty$, $\min_l|\mathcal{C}_l|\rightarrow\infty$. We say that a function $f(N,|\mathcal{C}_1|,\ldots,|\mathcal{C}_N|)$ is $O(\delta_K)$ (or $O({\delta_K}^\frac{1}{2})$) if there are constants $C, N_0, k_0$, such that
$$
|f(N,|\mathcal{C}_1|,\ldots,|\mathcal{C}_N|)|\leq C\delta_K \ (\mbox{ or }|f(N,|\mathcal{C}_1|,\ldots,|\mathcal{C}_N|)|\leq C{\delta_K}^\frac{1}{2}),
$$
for all $N\geq N_0$, $\min_l|\mathcal{C}_l|\geq k_0$. Now we present the approximation between the limiting system and the closed-loop system.

\begin{proposition}\label{error1}
For $q\in\{1,\ldots,N\}$, $i\in\{1,\ldots,K\}$, it holds that
\begin{enumerate}[(i)]
\item $\sup\limits_{0\leq t\leq T}E\left|z_t^{oq}-\overline{z}_t^q\right|^2=O(\delta_K)$;
\item $\sup\limits_{0\leq t\leq T}E \left||z_t^{oq}|^2-|\overline{z}_t^q|^2\right|=O({\delta_K}^{\frac{1}{2}})$;
\item $\sup\limits_{0\leq t\leq T}E\left|x_t^{oi}-\overline{y}_t^i\right|^2=O(\delta_K)$;
\item $\sup\limits_{0\leq t\leq T}E \left||x_t^{oi}|^2-|\overline{y}_t^i|^2\right|=O({\delta_K}^{\frac{1}{2}})$.
\end{enumerate}
\end{proposition}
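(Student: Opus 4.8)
The plan is to treat the four estimates as a cascade: parts (ii) and (iv) follow from (i) and (iii) respectively, and (iii) follows from (i), so the genuine work lies in the mean-square bound (i) for the network averages. For (ii) I would write $|z_t^{oq}|^2-|\overline z_t^q|^2=(z_t^{oq}-\overline z_t^q)(z_t^{oq}+\overline z_t^q)$ and apply Cauchy--Schwarz, so that $E\big||z_t^{oq}|^2-|\overline z_t^q|^2\big|\le (E|z_t^{oq}-\overline z_t^q|^2)^{1/2}(E|z_t^{oq}+\overline z_t^q|^2)^{1/2}$; the first factor is $O(\delta_K^{1/2})$ by (i) and the second is bounded uniformly in $N$ by Lemmas \ref{bounded1} and \ref{bounded2}, giving the rate $O(\delta_K^{1/2})$. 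Part (iv) is identical, using (iii), Lemma \ref{bounded2}, and the uniform bound $\sup_t E|\overline y_t^i|^2\le C$ (itself a consequence of Lemma \ref{bounded2} and (iii)). For (iii), subtracting \eqref{overliney^i} from \eqref{epsilonnashstate} cancels the idiosyncratic and common-noise terms and the $\overline g_t^q$ terms, leaving $d(x_t^{oi}-\overline y_t^i)=[(A-\tfrac{B^2}{2R}f_t)(x_t^{oi}-\overline y_t^i)+D(z_t^{oq}-\overline z_t^q)]dt$ with zero initial condition; It\^o's formula and Gronwall's inequality then give $E|x_t^{oi}-\overline y_t^i|^2\le C\int_0^t E|z_s^{oq}-\overline z_s^q|^2\,ds$, which is $O(\delta_K)$ once (i) holds.

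For (i), I would first aggregate the closed-loop dynamics \eqref{epsilonnashstate}: averaging over $j\in\mathcal{C}_l$ and summing against $\tfrac1N m_{ql}$ produces an SDE for $z_t^{oq}$ whose drift is $(A-\tfrac{B^2}{2R}f_t)z_t^{oq}+\tfrac{D}{N}\sum_l m_{ql}z_t^{ol}-\tfrac{B^2}{2R}\tfrac1N\sum_l m_{ql}\overline g_t^l$, whose $W^0$-coefficient is $\Sigma_0\tfrac1N\sum_l m_{ql}$, and which carries an extra idiosyncratic martingale $\tfrac{\Sigma}{N}\sum_l\tfrac{m_{ql}}{|\mathcal{C}_l|}\sum_{j\in\mathcal{C}_l}dw_t^j$. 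Comparing with \eqref{overlinez}, the crucial step is to exploit the exact step-graphon identities $\tfrac1N\sum_l m_{ql}\overline z_t^l=N\int_{P_q}(M^{[N]}z_t)(\alpha)d\alpha$ and $\tfrac1N\sum_l m_{ql}\overline g_t^l=N\int_{P_q}(M^{[N]}g_t)(\alpha)d\alpha$ (and the analogue for $\mathds{1}$): the $z$-discrepancy then splits into a coupling term $\tfrac{D}{N}\sum_l m_{ql}(z_t^{ol}-\overline z_t^l)$ that feeds back into the Gronwall estimate and a graphon-error term $DN\int_{P_q}((M^{[N]}-M)z_t)(\alpha)d\alpha$, while the $g$-discrepancy and the common-noise coefficient produce pure graphon errors with no coupling.

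The error sources are then estimated term by term, each contributing a piece of $\delta_K$. The initial discrepancy $z_0^{oq}-\overline z_0^q$ splits into a mean-zero fluctuation $\tfrac1N\sum_l m_{ql}(\hat x_0^l-\mu_l)$, with $\hat x_0^l=\tfrac{1}{|\mathcal{C}_l|}\sum_{j\in\mathcal{C}_l}x_0^j$, whose second moment is $O(1/\min_l|\mathcal{C}_l|)$ by independence of the initial data and $\mathrm{Var}(x_0^j)\le C_\sigma$, plus a deterministic part controlled by $E_N'$ (Assumption \ref{assumptionmu}) and $E_N$; the idiosyncratic martingale has quadratic variation $\le\tfrac{\Sigma^2 t}{N\min_l|\mathcal{C}_l|}$, hence $O(1/\min_l|\mathcal{C}_l|)$; and the graphon-error and common-noise-difference terms are all $O(E_N^2)$. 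This last estimate is the heart of the argument: writing $z_t^\beta=\sum_k z_t^k f_k(\beta)$ and $g_t^\beta=\sum_k g_t^k f_k(\beta)+\mathring g_t(\mathds{1}-\sum_k\langle\mathds{1},f_k\rangle f_k)(\beta)$, I would use the symmetry of $M$ and $M^{[N]}$ together with Fubini to convert each row integral into $\int_0^1 f_k(\beta)((M^{[N]}-M)\mathds{1}_{P_q})(\beta)d\beta$, bound it by $\|f_k\|_\infty\|(M^{[N]}-M)\mathds{1}_{P_q}\|_1$, and invoke the essential boundedness of the eigenfunctions (Lemma \ref{boundlemma}), the finite-rank bound \eqref{bounded}, and Assumption \ref{approximation} to extract the factor $E_N$; the kernel component of $g$ is handled identically, since $\mathds{1}-\sum_k\langle\mathds{1},f_k\rangle f_k$ is bounded. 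Collecting these bounds in It\^o's formula for $|z_t^{oq}-\overline z_t^q|^2$ and applying Young's inequality to the coupling term, I would obtain, with $\theta_t:=\max_q E|z_t^{oq}-\overline z_t^q|^2$, an inequality $\theta_t\le O(\delta_K)+C\int_0^t\theta_s\,ds$; Gronwall's inequality then gives $\sup_t\theta_t=O(\delta_K)$, which is (i).

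The main obstacle I anticipate is the uniform control of the graphon-approximation errors across all clusters at once: every operator difference must be routed through the sectional quantity $\tfrac{1}{\lambda_I(P_q)}\|(M-M^{[N]})\mathds{1}_{P_q}\|_1$ of Assumption \ref{approximation} (not the weaker cut norm), which forces the symmetry-plus-Fubini reduction and the eigenfunction boundedness of Lemma \ref{boundlemma}, and the inter-cluster coupling $\tfrac1N\sum_l m_{ql}(z^{ol}-\overline z^l)$ must be kept inside a single $\max_q$ Gronwall loop so that the constants do not degrade as $N\to\infty$ and $\min_l|\mathcal{C}_l|\to\infty$.
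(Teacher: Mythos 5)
Your reductions of (ii) and (iv) to (i) and (iii) via the difference-of-squares factorization and Cauchy--Schwarz, and of (iii) to (i) via cancellation of the noise and $\overline{g}^q$ terms followed by Gronwall, coincide with the paper's argument, as do the key ingredients for (i): the step-graphon identity $\tfrac1N\sum_l m_{ql}\overline{g}_t^l=N\int_{P_q}(M^{[N]}g_t)(\alpha)\,d\alpha$, the routing of every operator discrepancy through $N\|(M-M^{[N]})\mathds{1}_{P_q}\|_1$ via symmetry, Fubini, Lemma \ref{boundlemma} and the bound \eqref{bounded}, the variance estimate $O(1/\min_l|\mathcal{C}_l|)$ for the initial fluctuation and the averaged idiosyncratic martingale, and the control of the initial deterministic discrepancy by $E_N+E_N'$. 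Where you genuinely diverge is in how (i) is assembled. The paper inserts an intermediate process $\widehat{z}_t^q=\tfrac1N\sum_l m_{ql}\widehat{x}_t^l$ (no idiosyncratic noise, initial data $\mu_l$) and splits the error by the triangle inequality: the piece $\overline{z}^q-\widehat{z}^q$ is written with an explicit variation-of-constants formula for the vector $\widetilde{\Delta}^N$, whose outer product is expanded into nine terms $Y_1,\dots,Y_9$ bounded one by one using the matrix-exponential estimate of Lemma \ref{norminfty}, while the piece $\widehat{z}^q-z^{oq}$ gets its own Gronwall argument on the cluster averages $e_t^l$. You instead run a single It\^o-plus-Young-plus-Gronwall loop on $\theta_t=\max_q E|z_t^{oq}-\overline{z}_t^q|^2$, absorbing the inter-cluster coupling $\tfrac{D}{N}\sum_l m_{ql}(z^{ol}-\overline{z}^l)$ via $\tfrac1N\sum_l E|z_s^{ol}-\overline{z}_s^l|^2\le\theta_s$; this is valid (the quadratic variation of the common-noise discrepancy is the deterministic quantity $\Sigma_0^2(N\int_{P_q}((M-M^{[N]})\mathds{1}))^2\le\Sigma_0^2E_N^2$, and It\^o with Young needs no cross-term cancellations, so the fact that your initial error is random, unlike the paper's $\widetilde{\Delta}_0^N$, causes no harm). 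Your route is shorter and dispenses with both the auxiliary process and Lemma \ref{norminfty}; the paper's explicit-formula route isolates the graphon error from the sampling error more transparently and reuses machinery (the nine-term expansion, Lemma \ref{norminfty}) that also serves its treatment of the perturbed system. Both yield the same $O(\delta_K)$ rate with constants uniform in $q$ and $N$.
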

\begin{proof}
For all $\mathcal{A}_j\in\mathcal{C}_l$, one has
\begin{equation}\label{xoj}
  dx_t^{oj}=\left[(A-\frac{B^2}{2R}f_t)x_t^{oj}-\frac{B^2}{2R}\overline{g}_t^l+D\frac{1}{N}\sum\limits_{k=1}^N m_{lk}\frac{1}{|\mathcal{C}_k|}\sum\limits_{n\in\mathcal{C}_k}x_t^{on}\right]dt+\Sigma dw_t^j+\Sigma_0dW_t^0.
\end{equation}
In addition, for $l\in\{1,\ldots,N\}$, let $\widehat{x}_t^l$ be the unique solution of the following equation
\begin{equation}\label{widetildexl}
  \widehat{x}_t^l=\mu_l+\int_0^t \left[(A-\frac{B^2}{2R}f_s)\widehat{x}_s^l-\frac{B^2}{2R}\overline{g}_s^l+D\frac{1}{N}\sum\limits_{k=1}^N m_{lk}\widehat{x}_s^k\right]ds+\int_0^t\Sigma_0dW_s^0.
\end{equation}
Let $\widehat{z}_t^q\triangleq\frac{1}{N}\sum_{l=1}^N m_{ql}\widehat{x}_t^l$. Then
\begin{equation}\label{widehat{z}_t^q}
  \widehat{z}_t^q=\frac{1}{N}\sum\limits_{l=1}^N m_{ql}\mu_l+\int_0^t \left[(A-\frac{B^2}{2R}f_s)\widehat{z}_s^q-\frac{B^2}{2R}\frac{1}{N}\sum\limits_{l=1}^N m_{ql}\overline{g}_s^l+D\frac{1}{N}\sum\limits_{l=1}^N m_{ql}\widehat{z}_s^l\right]ds+\int_0^t\frac{1}{N}\sum\limits_{l=1}^N m_{ql}\Sigma_0dW_s^0.
\end{equation}
By the triangle inequality, one has
\begin{equation}\label{triangle}
  E|\overline{z}_t^q-z_t^{oq}|^2\leq 2E|\overline{z}_t^q-\widehat{z}_t^q|^2+2E|\widehat{z}_t^q-z_t^{oq}|^2.
\end{equation}
Inspired by the proof of (105)-(108) in Gao et al. \cite{Gao}, we first deal with the first part of the right-hand side of inequality (\ref{triangle}). Let $\Delta_t^q\triangleq\overline{z}_t^q-\widehat{z}_t^q$ and recall $\mu^{[N]}=\sum_{l=1}^N\mu_l\mathds{1}_{P_l}$. Then it follows from \eqref{overlinez} and \eqref{widehat{z}_t^q} that
\begin{align}\label{inequality}
   |\Delta_0^q|&=|\overline{z}_0^q-\widehat{z}_0^q|\nonumber\\
   &=\left|N\int_{P_q}(M\mu)(\alpha)d\alpha-\frac{1}{N}\sum\limits_{l=1}^N m_{ql}\mu_l\right|\nonumber\\
  &=\left|N\int_{P_q}(M\mu-M^{[N]}\mu^{[N]})(\alpha)d\alpha\right|\nonumber\\
  &\leq\left|N\int_{P_q}\left((M-M^{[N]})\mu\right)(\alpha)d\alpha\right|+\left|N\int_{P_q}\left(M^{[N]}(\mu-\mu^{[N]})\right)(\alpha)d\alpha\right|.
\end{align}
By the boundedness of $\mu(\cdot)$, we have
\begin{align*}
 \left|N\int_{P_q}\left((M-M^{[N]})\mu\right)(\alpha)d\alpha\right|&=N\left|\int_{P_q}\int_0^1(M-M^{[N]})(\alpha,\beta)\mu(\beta)d\beta d\alpha\right|\\
&=N\left|\int_0^1\mu(\beta)\int_{P_q}(M-M^{[N]})(\alpha,\beta)d\alpha d\beta\right|\\
&\leq N\int_0^1\left|\mu(\beta)\right|\cdot\left|\int_{P_q}(M-M^{[N]})(\alpha,\beta)d\alpha\right| d\beta\\
&\leq C\cdot N\int_0^1\left|\int_{P_q}(M-M^{[N]})(\alpha,\beta)d\alpha\right| d\beta\\
&\leq CE_N.
\end{align*}
For the second part of the right side of inequality (\ref{inequality}), one has
\begin{align*}
  \left|N\int_{P_q}\left(M^{[N]}(\mu-\mu^{[N]})\right)(\alpha)d\alpha\right|&=\left|N\int_{P_q}\int_0^1\sum\limits_{l=1}^N \mathds{1}_{P_l}(\beta)m_{ql}(\mu-\mu^{[N]})(\beta)d\beta d\alpha\right|\\
&=\left|\int_0^1\sum\limits_{l=1}^N \mathds{1}_{P_l}(\beta)m_{ql}(\mu-\mu^{[N]})(\beta)d\beta\right|\\
&=\left|\sum\limits_{l=1}^Nm_{ql}\int_{P_l}(\mu-\mu^{[N]})(\beta)d\beta\right|\\
&\leq\int_0^1\left|(\mu-\mu^{[N]})(\beta)\right|d\beta\\
&=E_N'.
\end{align*}
Thus, above two inequalities lead to
\begin{equation}\label{initial}
  |\Delta_0^q|\leq C(E_N+E_N').
\end{equation}
Now consider $\Delta_t^q$. Let $\widehat{z}_t\triangleq\sum_{l=1}^N \widehat{z}_t^l \mathds{1}_{P_l}$. Similarly we define $\overline{z}_t$ and $\overline{g}_t$. Then it yields
\begin{align}\label{Deltaq}
    \Delta_t^q=&\Delta_0^q+\int_0^t\left[(A-\frac{B^2}{2R}f_s)\Delta_s^q+DN\int_{P_q}(Mz_s-M^{[N]}\widehat{z}_s)(\alpha)d\alpha-\frac{B^2}{2R}N\int_{P_q}(Mg_s-M^{[N]}\overline{g}_s)(\alpha)d\alpha\right]ds\nonumber\\
&\mbox{}+\int_0^t\left[N\int_{P_q}\Sigma_0(M\mathds{1})(\alpha)d\alpha-\frac{1}{N}\sum\limits_{l=1}^Nm_{ql}\Sigma_0\right]dW_s^0\nonumber\\
=&\Delta_0^q+\int_0^t\left[(A-\frac{B^2}{2R}f_s)\Delta_s^q+DN\int_{P_q}((M-M^{[N]})z_s)(\alpha)d\alpha-\frac{B^2}{2R}N\int_{P_q}((M-M^{[N]})g_s)(\alpha)d\alpha\right.\nonumber\\
&\left.\mbox{}+D\frac{1}{N}\sum\limits_{l=1}^Nm_{ql}\Delta_s^l\right]ds+\int_0^t\Sigma_0N\int_{P_q}((M-M^{[N]})\mathds{1})(\alpha)d\alpha dW_s^0.
\end{align}
Denoting $\widetilde{\Delta}_t^N\triangleq [\Delta_t^1,\ldots,\Delta_t^N]^T$, we have
\begin{equation}\label{Delta}
  \widetilde{\Delta}_t^N=\widetilde{\Delta}_0^N+\int_0^t\left\{\left[(A-\frac{B^2}{2R}f_s)I_N+\frac{D}{N}M_N\right]\widetilde{\Delta}_s^N+D\cdot D_s^{Nz}-\frac{B^2}{2R}\cdot D_s^{Ng}\right\}ds+\int_0^t\Sigma_0D^{N\mathds{1}}dW_s^0
\end{equation}
where $I_N$ is the identity matrix in $\mathbb{R}^{N\times N}$ and
\begin{align*}
  &D_s^{Nz}=N\left\langle \mathds{1}_{P_q},(M-M^{[N]})z_s\right\rangle_{q=1}^N, \\
  &D_s^{Ng}=N\left\langle \mathds{1}_{P_q},(M-M^{[N]})g_s\right\rangle_{q=1}^N, \\
  &D^{N\mathds{1}}=N\left\langle \mathds{1}_{P_q},(M-M^{[N]})\mathds{1}\right\rangle_{q=1}^N.
\end{align*}
By applying Theorem 3.3 in \cite{lvqi}, the solution $\widetilde{\Delta}_t^N$ of (\ref{Delta}) is given as follows
\begin{equation}\label{Deltare}
  \widetilde{\Delta}_t^N=\Phi(t)\widetilde{\Delta}_0^N+\Phi(t)\int_0^t\Phi(s)^{-1}(D\cdot D_s^{Nz}-\frac{B^2}{2R}\cdot D_s^{Ng})ds+\Phi(t)\int_0^t\Phi(s)^{-1}\Sigma_0D^{N\mathds{1}}dW_s^0,
\end{equation}
where
\begin{align*}
  \Phi(t)=\Gamma(t)\exp\left(\frac{Dt}{N}M_N\right),\quad \Gamma(t)=\exp\left(\int_0^t(A-\frac{B^2}{2R}f_s)ds\right).
\end{align*}
From (\ref{Deltare}), one has
\begin{align}\label{Deltasquare}
  &\widetilde{\Delta}_t^N(\widetilde{\Delta}_t^N)^T\nonumber\\
  =&\Phi(t)\widetilde{\Delta}_0^N(\widetilde{\Delta}_0^N)^T{\Phi(t)}^T+\Phi(t)\widetilde{\Delta}_0^N\left(\Phi(t)\int_0^t\Phi(s)^{-1}(D\cdot D_s^{Nz}-\frac{B^2}{2R}\cdot D_s^{Ng})ds\right)^T\nonumber\\
  &+\Phi(t)\widetilde{\Delta}_0^N\left(\Phi(t)\int_0^t\Phi(s)^{-1}\Sigma_0D^{N\mathds{1}}dW_s^0\right)^T+\Phi(t)\int_0^t\Phi(s)^{-1}(D\cdot D_s^{Nz}-\frac{B^2}{2R}\cdot D_s^{Ng})ds(\widetilde{\Delta}_0^N)^T{\Phi(t)}^T\nonumber\\
  &+\Phi(t)\int_0^t\Phi(s)^{-1}(D\cdot D_s^{Nz}-\frac{B^2}{2R}\cdot D_s^{Ng})ds\left(\Phi(t)\int_0^t\Phi(s)^{-1}(D\cdot D_s^{Nz}-\frac{B^2}{2R}\cdot D_s^{Ng})ds\right)^T\nonumber\\
  &+\Phi(t)\int_0^t\Phi(s)^{-1}(D\cdot D_s^{Nz}-\frac{B^2}{2R}\cdot D_s^{Ng})ds\left(\Phi(t)\int_0^t\Phi(s)^{-1}\Sigma_0D^{N\mathds{1}}dW_s^0\right)^T\nonumber\\
  &+\Phi(t)\int_0^t\Phi(s)^{-1}\Sigma_0D^{N\mathds{1}}dW_s^0(\widetilde{\Delta}_0^N)^T{\Phi(t)}^T\nonumber\\
  &+\Phi(t)\int_0^t\Phi(s)^{-1}\Sigma_0D^{N\mathds{1}}dW_s^0\left(\Phi(t)\int_0^t\Phi(s)^{-1}(D\cdot D_s^{Nz}-\frac{B^2}{2R}\cdot D_s^{Ng})ds\right)^T\nonumber\\
  &+\Phi(t)\int_0^t\Phi(s)^{-1}\Sigma_0D^{N\mathds{1}}dW_s^0\left(\Phi(t)\int_0^t\Phi(s)^{-1}\Sigma_0D^{N\mathds{1}}dW_s^0\right)^T\nonumber\\
  \triangleq&\sum\limits_{i=1}^9Y_i.
\end{align}
By (\ref{initial}) and Lemma 4 in \cite{Gao}, $Y_1$ satisfies
\begin{align}\label{EY_1}
    E[Y_1]_{qq}=[\Phi(t)\widetilde{\Delta}_0^N(\widetilde{\Delta}_0^N)^T{\Phi(t)}^T]_{qq}=\left|[\Phi(t)\widetilde{\Delta}_0^N]_q\right|^2
\leq C{\Gamma(t)}^2e^{2|D|t}(E_N+E_N')^2\leq C(E_N+E_N')^2.
\end{align}
For $Y_3$, denoting the $q$th row of $\Phi(t)$ by $\Phi(t)_{q:}$, one has
\begin{equation*}
  \begin{split}
    [Y_3]_{qq}&=\left[\Phi(t)\widetilde{\Delta}_0^N\left(\Phi(t)\int_0^t\Phi(s)^{-1}\Sigma_0D^{N\mathds{1}}dW_s^0\right)^T\right]_{qq}\\
    &=[\Phi(t)\widetilde{\Delta}_0^N]_q\left[\Phi(t)\int_0^t\Phi(s)^{-1}\Sigma_0D^{N\mathds{1}}dW_s^0\right]_q\\
    &=[\Phi(t)\widetilde{\Delta}_0^N]_q \Phi(t)_{q:}\int_0^t\Phi(s)^{-1}\Sigma_0D^{N\mathds{1}}dW_s^0
  \end{split}
\end{equation*}
and so
\begin{equation}\label{EY_3}
  E[Y_3]_{qq}=[\Phi(t)\widetilde{\Delta}_0^N]_q E\left[\Phi(t)_{q:}\int_0^t\Phi(s)^{-1}\Sigma_0D^{N\mathds{1}}dW_s^0\right]=0.
\end{equation}
Clearly, $E[Y_7]_{qq}=E[Y_3]_{qq}=0$. Moreover, by the boundedness of eigenfunctions $f_l$, for $l\in\{1,\ldots,d\}$, we have
\begin{align}\label{DNz}
    \|D_s^{Nz}\|_\infty&=\max\limits_{1\leq q\leq N}\left|N\int_{P_q}((M-M^{[N]})z_s)(\alpha)d\alpha\right|\nonumber\\
&=\max\limits_{1\leq q\leq N}\left|N\int_{P_q}\int_0^1(M-M^{[N]})(\alpha,\beta)z_s(\beta)d\beta d\alpha\right|\nonumber\\
&\leq \max\limits_{1\leq q\leq N}N\int_0^1|z_s(\beta)|\cdot|\int_{P_q}(M-M^{[N]})(\alpha,\beta)d\alpha|d\beta\nonumber\\
&\leq \max\limits_{1\leq q\leq N}N\int_0^1\left(\sum\limits_{l=1}^d|z_s^l|\cdot|f_l(\beta)|\right)\cdot|\int_{P_q}(M-M^{[N]})(\alpha,\beta)d\alpha|d\beta\nonumber\\
&\leq C\left(\sum\limits_{l=1}^d|z_s^l|\right)\max\limits_{1\leq q\leq N}N\int_0^1|\int_{P_q}(M-M^{[N]})(\alpha,\beta)d\alpha|d\beta\nonumber\\
&=C\left(\sum\limits_{l=1}^d|z_s^l|\right)E_N.
\end{align}
Then
\begin{equation*}
  E\int_0^t\|D_s^{Nz}\|_\infty^2 ds\leq C{E_N}^2.
\end{equation*}
Similarly, we obtain $E\int_0^t\|D_s^{Ng}\|_\infty^2 ds\leq C{E_N}^2$. For the fifth part of (\ref{Deltasquare}),
\begin{align}\label{Y_5}
   [Y_5]_{qq}&=\left[\Phi(t)\int_0^t\Phi(s)^{-1}(D\cdot D_s^{Nz}-\frac{B^2}{2R}\cdot D_s^{Ng})ds\left(\Phi(t)\int_0^t\Phi(s)^{-1}(D\cdot D_s^{Nz}-\frac{B^2}{2R}\cdot D_s^{Ng})ds\right)^T\right]_{qq}\nonumber\\
&=\left[\Phi(t)\int_0^t\Phi(s)^{-1}(D\cdot D_s^{Nz}-\frac{B^2}{2R}\cdot D_s^{Ng})ds\right]_q^2\nonumber\\
&=\left(\int_0^t[\Phi(t)\Phi(s)^{-1}]_{q:}(D\cdot D_s^{Nz}-\frac{B^2}{2R}\cdot D_s^{Ng})ds\right)^2\nonumber\\
&\leq\left(\int_0^t\sum\limits_{l=1}^N\left|[\Phi(t)\Phi(s)^{-1}]_{ql}\right|\cdot\left|[D\cdot D_s^{Nz}-\frac{B^2}{2R}\cdot D_s^{Ng}]_l\right|ds\right)^2\nonumber\\
&\leq\left(\int_0^t\sum\limits_{l=1}^N\left|[\Phi(t)\Phi(s)^{-1}]_{ql}\right|\cdot\|D\cdot D_s^{Nz}-\frac{B^2}{2R}\cdot D_s^{Ng}\|_{\infty}ds\right)^2\nonumber\\
&\leq\left(\int_0^t\|\Phi(t)\Phi(s)^{-1}\|_{\infty}\cdot\|D\cdot D_s^{Nz}-\frac{B^2}{2R}\cdot D_s^{Ng}\|_{\infty}ds\right)^2.
\end{align}
By Lemma \ref{norminfty}, we derive
\begin{equation}\label{Phiinfty}
  \|\Phi(t)\Phi(s)^{-1}\|_{\infty}=\Gamma(t){\Gamma(s)}^{-1}\|e^{\frac{D(t-s)}{N}M_N}\|_\infty\leq \Gamma(t){\Gamma(s)}^{-1}e^{|D|(t-s)}.
\end{equation}
By (\ref{DNz}), (\ref{Y_5}) and (\ref{Phiinfty}), we have
\begin{align}\label{EY_5}
    E[Y_5]_{qq}&\leq E\left(\int_0^t\|\Phi(t)\Phi(s)^{-1}\|_{\infty}\cdot\|D\cdot D_s^{Nz}-\frac{B^2}{2R}\cdot D_s^{Ng}\|_{\infty}ds\right)^2\nonumber\\
    &\leq 2TE\int_0^t{\Gamma(t)}^2{\Gamma(s)}^{-2}e^{2|D|(t-s)}\left(D^2\|D_s^{Nz}\|_\infty^2+\frac{B^4}{4R^2}\|D_s^{Ng}\|_\infty^2\right)ds\nonumber\\
    &\leq CE\int_0^t(\|D_s^{Nz}\|_\infty^2+\|D_s^{Ng}\|_\infty^2)ds\nonumber\\
    &\leq C{E_N}^2.
\end{align}
Hence, for the second part of (\ref{Deltasquare}),
\begin{equation*}
  \begin{split}
    [Y_2]_{qq}&=\left[\Phi(t)\widetilde{\Delta}_0^N\left(\Phi(t)\int_0^t\Phi(s)^{-1}(D\cdot D_s^{Nz}-\frac{B^2}{2R}\cdot D_s^{Ng})ds\right)^T\right]_{qq}\\
    &=\left[\Phi(t)\widetilde{\Delta}_0^N\right]_q\left[\Phi(t)\int_0^t\Phi(s)^{-1}(D\cdot D_s^{Nz}-\frac{B^2}{2R}\cdot D_s^{Ng})ds\right]_q\\
    &\leq\frac{1}{2}\left|\left[\Phi(t)\widetilde{\Delta}_0^N\right]_q\right|^2+\frac{1}{2}\left|\left[\Phi(t)\int_0^t\Phi(s)^{-1}(D\cdot D_s^{Nz}-\frac{B^2}{2R}\cdot D_s^{Ng})ds\right]_q\right|^2\\
    &=\frac{1}{2}[Y_1]_{qq}+\frac{1}{2}[Y_5]_{qq}.
  \end{split}
\end{equation*}
Then
\begin{equation}\label{EY_2}
  E[Y_2]_{qq}\leq\frac{1}{2}E[Y_1]_{qq}+\frac{1}{2}E[Y_5]_{qq}.
\end{equation}
This same bound holds for the fourth part $Y_4$ of (\ref{Deltasquare}), i.e. $E[Y_4]_{qq}=E[Y_2]_{qq}.$ For the ninth part of (\ref{Deltasquare}), denoting the $q$th row of $\Phi(t)$ by $\Phi(t)_{q:}$, one has
\begin{align}\label{Y_9}
   [Y_9]_{qq}&=\left[\Phi(t)\int_0^t\Phi(s)^{-1}\Sigma_0D^{N\mathds{1}}dW_s^0\left(\Phi(t)\int_0^t\Phi(s)^{-1}\Sigma_0D^{N\mathds{1}}dW_s^0\right)^T\right]_{qq}\nonumber\\
&=\left[\int_0^t\Phi(t)\Phi(s)^{-1}\Sigma_0D^{N\mathds{1}}dW_s^0\right]_q^2\nonumber\\
&=\Sigma_0^2\left(\int_0^t[\Phi(t)\Phi(s)^{-1}D^{N\mathds{1}}]_q dW_s^0\right)^2\nonumber\\
&=\Sigma_0^2\left(\int_0^t\Phi(t)_{q:}(\Phi(s)^{-1}D^{N\mathds{1}}) dW_s^0\right)^2\nonumber\\
&=\Sigma_0^2\left(\sum\limits_{l=1}^N[\Phi(t)]_{ql}\int_0^t[\Phi(s)^{-1}D^{N\mathds{1}}]_l dW_s^0\right)^2.
\end{align}
We note $\|D^{N\mathds{1}}\|_\infty\leq E_N$. Then, by Lemma \ref{norminfty}, one has
\begin{align}\label{EY_9}
   E[Y_9]_{qq}&=\Sigma_0^2E\left(\sum\limits_{l=1}^N[\Phi(t)]_{ql}\int_0^t[\Phi(s)^{-1}D^{N\mathds{1}}]_l dW_s^0\right)^2\nonumber\\
   &=\Sigma_0^2\sum\limits_{l=1}^N \sum\limits_{k=1}^N [\Phi(t)]_{ql} [\Phi(t)]_{qk} E\left(\int_0^t[\Phi(s)^{-1}D^{N\mathds{1}}]_l dW_s^0 \int_0^t[\Phi(s)^{-1}D^{N\mathds{1}}]_k dW_s^0\right)\nonumber\\
   &\leq \Sigma_0^2\sum\limits_{l=1}^N \sum\limits_{k=1}^N |[\Phi(t)]_{ql}|\cdot |[\Phi(t)]_{qk}| E\left|\int_0^t[\Phi(s)^{-1}D^{N\mathds{1}}]_l dW_s^0 \int_0^t[\Phi(s)^{-1}D^{N\mathds{1}}]_k dW_s^0\right|\nonumber\\
   &\leq \Sigma_0^2\sum\limits_{l=1}^N \sum\limits_{k=1}^N |[\Phi(t)]_{ql}|\cdot |[\Phi(t)]_{qk}| \left(E\left|\int_0^t[\Phi(s)^{-1}D^{N\mathds{1}}]_l dW_s^0\right|^2\right)^{\frac{1}{2}} \left(E\left|\int_0^t[\Phi(s)^{-1}D^{N\mathds{1}}]_k dW_s^0\right|^2\right)^{\frac{1}{2}}\nonumber\\
   &=\Sigma_0^2\left\{\sum\limits_{l=1}^N |[\Phi(t)]_{ql}| \left(E\left|\int_0^t[\Phi(s)^{-1}D^{N\mathds{1}}]_l dW_s^0\right|^2\right)^{\frac{1}{2}}\right\}^2\nonumber\\
   &=\Sigma_0^2\left\{\sum\limits_{l=1}^N |[\Phi(t)]_{ql}| \left(\int_0^t\left|[\Phi(s)^{-1}D^{N\mathds{1}}]_l\right|^2 ds\right)^{\frac{1}{2}}\right\}^2\nonumber\\
   &\leq \Sigma_0^2\left\{\sum\limits_{l=1}^N |[\Phi(t)]_{ql}| \cdot\max\limits_l \left(\int_0^t\left|[\Phi(s)^{-1}D^{N\mathds{1}}]_l\right|^2 ds\right)^{\frac{1}{2}}\right\}^2\nonumber\\
   &\leq \Sigma_0^2\|\Phi(t)\|_\infty^2 \cdot\max\limits_l \int_0^t\left|[\Phi(s)^{-1}D^{N\mathds{1}}]_l\right|^2 ds\nonumber\\
   &\leq \Sigma_0^2\|\Phi(t)\|_\infty^2 \int_0^t\|\Phi(s)^{-1}D^{N\mathds{1}}\|_\infty^2 ds\nonumber\\
   &\leq \Sigma_0^2\|\Phi(t)\|_\infty^2 \|D^{N\mathds{1}}\|_\infty^2 \int_0^t\|\Phi(s)^{-1}\|_\infty^2 ds\nonumber\\
   &\leq \Sigma_0^2{\Gamma(t)}^2 e^{2|D|t}{E_N}^2 \int_0^t{\Gamma(s)}^{-2}e^{2|D|s}ds\leq C{E_N}^2.
\end{align}
By (\ref{Y_5}) and (\ref{Y_9}), we have
\begin{equation*}
  \begin{split}
  [Y_6]_{qq}&=\left[\Phi(t)\int_0^t\Phi(s)^{-1}(D\cdot D_s^{Nz}-\frac{B^2}{2R}\cdot D_s^{Ng})ds\left(\Phi(t)\int_0^t\Phi(s)^{-1}\Sigma_0D^{N\mathds{1}}dW_s^0\right)^T\right]_{qq}\\
&=\left[\Phi(t)\int_0^t\Phi(s)^{-1}(D\cdot D_s^{Nz}-\frac{B^2}{2R}\cdot D_s^{Ng})ds\right]_q\left[\Phi(t)\int_0^t\Phi(s)^{-1}\Sigma_0D^{N\mathds{1}}dW_s^0\right]_q\\
&\leq\frac{1}{2}\left([Y_5]_{qq}+[Y_9]_{qq}\right).
  \end{split}
\end{equation*}
Then the sixth part $Y_6$ of (\ref{Deltasquare}) satisfies
\begin{equation}\label{EY_6}
  E[Y_6]_{qq}\leq \frac{1}{2}\left(E[Y_5]_{qq}+E[Y_9]_{qq}\right).
\end{equation}
This same bound holds for $Y_8$, i.e. $E[Y_8]_{qq}=E[Y_6]_{qq}$. By (\ref{Deltasquare}), (\ref{EY_1}), (\ref{EY_3}), (\ref{EY_5}), (\ref{EY_2}), (\ref{EY_9}) and (\ref{EY_6}), we derive
\begin{equation}\label{chaos1}
  E|\overline{z}_t^q-\widehat{z}_t^q|^2=E|\Delta_t^q|^2=E[\widetilde{\Delta}_t^N(\widetilde{\Delta}_t^N)^T]_{qq}=\sum\limits_{i=1}^9E[Y_i]_{qq}\leq C\left({E_N}^2+(E_N')^2\right).
\end{equation}

Next, we analysis the second part of the right-hand side of inequality (\ref{triangle}). Denote $e_t^l\triangleq\widehat{x}_t^l-\frac{1}{|\mathcal{C}_l|}\sum\limits_{j\in\mathcal{C}_l}x_t^{oj}$. By the definition of $\widehat{z}_t^q$ and $z_t^{oq}$, we have $\widehat{z}_t^q-z_t^{oq}=\frac{1}{N}\sum\limits_{l=1}^N m_{ql}e_t^l$. From (\ref{xoj}) and (\ref{widetildexl}), $e_t^l$ satisfies
\begin{equation}\label{e_t^l}
  e_t^l=\mu_l-\frac{1}{|\mathcal{C}_l|}\sum\limits_{j\in\mathcal{C}_l}x_0^{j}+\int_0^t\left[\left(A-\frac{B^2}{2R}f_s\right)e_s^l+D\frac{1}{N}\sum\limits_{k=1}^N m_{lk}e_s^k\right]ds-\frac{1}{|\mathcal{C}_l|}\sum\limits_{j\in\mathcal{C}_l}\int_0^t\Sigma dw_s^j.
\end{equation}
Denote $\widetilde{w}_t^l\triangleq\frac{1}{\sqrt{|\mathcal{C}_l|}}\sum\limits_{j\in\mathcal{C}_l}\int_0^tdw_s^j$. By L\'{e}vy's theorem \cite{Karatzas}, $\{\widetilde{w}^l\}_{l=1}^N$ is a $N$-dimensional Brownian motion. Then (\ref{e_t^l}) can be represented as follows
\begin{equation}\label{e_t^lnew}
  e_t^l=\mu_l-\frac{1}{|\mathcal{C}_l|}\sum\limits_{j\in\mathcal{C}_l}x_0^{j}+\int_0^t\left[\left(A-\frac{B^2}{2R}f_s\right)e_s^l+D\frac{1}{N}\sum\limits_{k=1}^N m_{lk}e_s^k\right]ds-\int_0^t\frac{1}{\sqrt{|\mathcal{C}_l|}}\Sigma d\widetilde{w}_s^l.
\end{equation}
Taking square on both sides, we have
\begin{equation*}
  |e_t^l|^2\leq C\left\{\left|\mu_l-\frac{1}{|\mathcal{C}_l|}\sum\limits_{j\in\mathcal{C}_l}x_0^{j}\right|^2+\int_0^t\left[|e_s^l|^2+\frac{1}{N}\sum\limits_{k=1}^N |e_s^k|^2\right]ds+\sup\limits_{0\leq t\leq T}\left|\int_0^t\frac{1}{\sqrt{|\mathcal{C}_l|}}\Sigma d\widetilde{w}_s^l\right|^2\right\}.
\end{equation*}
Applying Gronwall's inequality and the BDG inequality, one has
\begin{equation*}
  \begin{split}
    \frac{1}{N}\sum\limits_{l=1}^N E|e_t^l|^2&\leq C\left\{\frac{1}{N}\sum\limits_{l=1}^NE\left|\mu_l-\frac{1}{|\mathcal{C}_l|}\sum\limits_{j\in\mathcal{C}_l}x_0^{j}\right|^2+\int_0^t \frac{1}{N}\sum\limits_{l=1}^N E|e_s^l|^2ds+\frac{1}{N}\sum\limits_{l=1}^N E\sup\limits_{0\leq t\leq T}\left|\int_0^t\frac{1}{\sqrt{|\mathcal{C}_l|}}\Sigma d\widetilde{w}_s^l\right|^2\right\}\\
    &\leq C\left\{\frac{1}{N}\sum\limits_{l=1}^N\frac{1}{|\mathcal{C}_l|}C_\sigma+\int_0^t \frac{1}{N}\sum\limits_{l=1}^N E|e_s^l|^2ds+\frac{1}{N}\sum\limits_{l=1}^N \frac{1}{|\mathcal{C}_l|}\Sigma^2T\right\}\\
    &\leq C\frac{1}{\min_l|\mathcal{C}_l|}
  \end{split}
\end{equation*}
and so
\begin{equation}\label{chaos2}
  E|\widehat{z}_t^q-z_t^{oq}|^2\leq \frac{1}{N}\sum\limits_{l=1}^N E|e_t^l|^2\leq C\frac{1}{\min_l|\mathcal{C}_l|}.
\end{equation}
It follows from (\ref{triangle}), (\ref{chaos1}) and (\ref{chaos2}) that
\begin{align}\label{overlinez_t^q-z_t^oq}
 \sup\limits_{0\leq t\leq T}E|\overline{z}_t^q-z_t^{oq}|^2&\leq 2\sup\limits_{0\leq t\leq T}E|\overline{z}_t^q-\widehat{z}_t^q|^2+2\sup\limits_{0\leq t\leq T}E|\widehat{z}_t^q-z_t^{oq}|^2\nonumber\\
 &\leq C\left[{E_N}^2+(E_N')^2+\frac{1}{\min_l|\mathcal{C}_l|}\right]
\end{align}
and so the conclusion (i) is true.

Since
\begin{equation*}
  \begin{split}
    E \left||z_t^{oq}|^2-|\overline{z}_t^q|^2\right|&=E\left||z_t^{oq}-\overline{z}_t^q|^2+2\overline{z}_t^q(z_t^{oq}-\overline{z}_t^q)\right|\\
    &\leq E|z_t^{oq}-\overline{z}_t^q|^2+2\left(E|\overline{z}_t^q|^2\right)^{\frac{1}{2}}\left(E|z_t^{oq}-\overline{z}_t^q|^2\right)^{\frac{1}{2}},
  \end{split}
\end{equation*}
Lemma \ref{bounded1} and conclusion (i) lead to the conclusion (ii).

Moreover, by (\ref{overliney^i}) and (\ref{epsilonnashstate}),
$$
x_t^{oi}-\overline{y}_t^i=\int_0^t\left[(A-\frac{B^2}{2R}f_s)(x_s^{oi}-\overline{y}_s^i)+D(z_s^{oq}-\overline{z}_s^q)\right]ds.
$$
Taking square on both sides and then using Gronwall's inequality, we derive the result (iii).

Finally, it follows from (\ref{overliney^i}) that $E\sup\limits_{t\in [0,T]}|\overline{y}_t^i|^2\leq C$, where $C$ is independent of $i$.
Since
$$
E\left||x_t^{oi}|^2-|\overline{y}_t^i|^2\right|\leq E|x_t^{oi}-\overline{y}_t^i|^2+2\left(E|\overline{y}_t^i|^2\right)^{\frac{1}{2}}\left(E|x_t^{oi}
-\overline{y}_t^i|^2\right)^{\frac{1}{2}},
$$
the conclusion (iii) and the boundedness of $\overline{y}^i$ yield the conclusion (iv).
\end{proof}

Now in view of Lemmas \ref{bounded1} and \ref{bounded2} and Proposition \ref{error1}, we obtain the following approximation result.
\begin{proposition}\label{error2}
For $i\in\{1,\ldots,K\}$, it holds that $\left|J_i(u^{oi},u^{-oi})-J_i^*(\overline{u}^i)\right|=O({\delta_K}^\frac{1}{2})$.
\end{proposition}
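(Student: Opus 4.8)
The plan is to expand both cost functionals into the same collection of quadratic terms and then estimate their difference termwise, exploiting that the two problems share the same Riccati gain $f_t$ and the same offset $\overline{g}_t^q$. Writing the running integrand of $J_i(u^{oi},u^{-oi})$ through $u_t^{oi}=-\frac{B}{2R}(f_tx_t^{oi}+\overline{g}_t^q)$ and $\nu_t^{oi}=H(z_t^{oq}+\eta)$, and likewise that of $J_i^*(\overline{u}^i)$ through $\overline{y}_t^i$, $\overline{z}_t^q$ and $\overline{u}_t^i=-\frac{B}{2R}(f_t\overline{y}_t^i+\overline{g}_t^q)$, the difference $J_i(u^{oi},u^{-oi})-J_i^*(\overline{u}^i)$ becomes, after expanding the squares, a finite sum of terms of three types: pure squares $E|x_t^{oi}|^2-E|\overline{y}_t^i|^2$ and $E|z_t^{oq}|^2-E|\overline{z}_t^q|^2$ (together with the control square $f_t^2(E|x_t^{oi}|^2-E|\overline{y}_t^i|^2)$), bilinear cross terms such as $E[x_t^{oi}z_t^{oq}]-E[\overline{y}_t^i\overline{z}_t^q]$ and $E[\overline{g}_t^q x_t^{oi}]-E[\overline{g}_t^q\overline{y}_t^i]$, and purely linear terms in $x$ and $z$ coming from the $\eta$-contributions. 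Crucially, every term built solely from $\overline{g}_t^q$ and constants (e.g.\ $(\overline{g}_t^q)^2$ and $\eta^2$) cancels identically, since both controls carry the same offset.

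For the pure-square terms I would invoke Proposition \ref{error1} directly: parts (ii) and (iv) give $\sup_{t}E\big||z_t^{oq}|^2-|\overline{z}_t^q|^2\big|=O(\delta_K^{1/2})$ and $\sup_{t}E\big||x_t^{oi}|^2-|\overline{y}_t^i|^2\big|=O(\delta_K^{1/2})$, uniformly in $q$ and $i$. For each bilinear and linear term I would use a splitting of the form
\[
E\big[x_t^{oi}z_t^{oq}-\overline{y}_t^i\overline{z}_t^q\big]
= E\big[x_t^{oi}(z_t^{oq}-\overline{z}_t^q)\big]+E\big[\overline{z}_t^q(x_t^{oi}-\overline{y}_t^i)\big]
\]
combined with the Cauchy--Schwarz inequality, bounding each factor by Proposition \ref{error1}(i),(iii) (which supply $L^2$-differences of order $\delta_K^{1/2}$) and by the uniform second-moment estimates of Lemmas \ref{bounded1} and \ref{bounded2}, together with $E\sup_t|\overline{y}_t^i|^2\le C$ established at the end of the proof of Proposition \ref{error1}. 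This delivers $O(\delta_K^{1/2})$ for every cross and linear term as well, uniformly in $i$ and $q$; the linear terms are controlled in the same way via $|E(x_t^{oi}-\overline{y}_t^i)|\le (E|x_t^{oi}-\overline{y}_t^i|^2)^{1/2}$.

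Since the horizon $[0,T]$ is finite, integrating the uniformly-$O(\delta_K^{1/2})$ running-cost difference over $t$ merely multiplies the bound by $T$, and the terminal term $Q_T\big(E|x_T^{oi}-\nu_T^{oi}|^2-E|\overline{y}_T^i-\overline{\nu}_T^q|^2\big)$ is handled by the identical termwise argument evaluated at $t=T$. Summing the finitely many contributions yields $|J_i(u^{oi},u^{-oi})-J_i^*(\overline{u}^i)|=O(\delta_K^{1/2})$ with a constant independent of $N$, $K$ and $i$, as claimed. I would expect the only genuinely delicate point to be bookkeeping rather than analysis: one must check that the constants produced at each step are uniform in the agent index $i$ and the block index $q$ (so the estimate survives $\max_i$), and one must accept the square-root loss---the differences of squares and the cross terms are only $O(\delta_K^{1/2})$ even though the underlying state and mean-field differences are $O(\delta_K)$ in $L^2$---which is exactly what forces the final rate to be $\delta_K^{1/2}$ rather than $\delta_K$.
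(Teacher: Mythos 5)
Your proposal is correct and follows essentially the same route as the paper: the authors also bound the cost difference by $\sup_t$ of the termwise-expanded integrand, cancel the common $\overline{g}_t^q$ offset in the control-square difference, apply Proposition \ref{error1}(ii),(iv) to the pure squares, and handle the cross and linear terms by the same add-and-subtract splitting plus Cauchy--Schwarz with the moment bounds of Lemmas \ref{bounded1} and \ref{bounded2}. The square-root loss you identify is exactly where the paper's rate $O(\delta_K^{1/2})$ comes from.
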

\begin{proof}
Denote $\nu_t^{oi}=H(z_t^{oi}+\eta)$. Then we have
\begin{equation*}
  \begin{split}
    &\left|J_i(u^{oi},u^{-oi})-J_i^*(\overline{u}^i)\right|\\
    &\leq E\int_0^T\left[Q|(x_t^{oi}-\nu_t^{oi})^2-(\overline{y}_t^i-\overline{\nu}_t^q)^2|+R|(u_t^{oi})^2-(\overline{u}_t^i)^2|\right]dt+Q_TE|(x_T^{oi}-\nu_T^{oi})^2-(\overline{y}_T^i-\overline{\nu}_T^q)^2|\\
    &\leq(QT+Q_T)\sup\limits_t E|(x_t^{oi}-\nu_t^{oi})^2-(\overline{y}_t^i-\overline{\nu}_t^q)^2|+RT\sup\limits_t E|(u_t^{oi})^2-(\overline{u}_t^i)^2|\\
    &\leq(QT+Q_T)\sup\limits_t E\left||x_t^{oi}|^2-|\overline{y}_t^i|^2+|\nu_t^{oi}|^2-|\overline{\nu}_t^q|^2-2(x_t^{oi}\nu_t^{oi}-\overline{y}_t^i\nu_t^{oi}+\overline{y}_t^i\nu_t^{oi}-\overline{y}_t^i\overline{\nu}_t^q)\right|\\
    &\quad +RT\sup\limits_t E\left[\frac{B^2}{4R^2}f_t^2\left||x_t^{oi}|^2-|\overline{y}_t^i|^2\right|+\frac{B^2}{2R^2}|f_t|\cdot|\overline{g}_t^q|\cdot|x_t^{oi}-\overline{y}_t^i|\right]\\
    &\leq(QT+Q_T)\sup\limits_t E\left[\left||x_t^{oi}|^2-|\overline{y}_t^i|^2\right|+H^2\left||z_t^{oq}|^2-|\overline{z}_t^q|^2\right|+2H^2|\eta|\cdot|z_t^{oq}-\overline{z}_t^q|+2|H|\cdot|z_t^{oq}+\eta|\cdot|x_t^{oi}-\overline{y}_t^i|\right.\\
    &\left.\quad +2|H|\cdot |\overline{y}_t^i|\cdot|z_t^{oq}-\overline{z}_t^q|\right]+RT\sup\limits_t E\left[\frac{B^2}{4R^2}f_t^2\left||x_t^{oi}|^2-|\overline{y}_t^i|^2\right|+\frac{B^2}{2R^2}|f_t|\cdot|\overline{g}_t^q|\cdot|x_t^{oi}-\overline{y}_t^i|\right]\\
    &\leq C\left[\sup\limits_tE \left||x_t^{oi}|^2-|\overline{y}_t^i|^2\right|+\sup\limits_tE \left||z_t^{oq}|^2-|\overline{z}_t^q|^2\right|+\left(\sup\limits_tE\left|z_t^{oq}-\overline{z}_t^q\right|^2\right)^{\frac{1}{2}}+\left(\sup\limits_tE\left|x_t^{oi}-\overline{y}_t^i\right|^2\right)^{\frac{1}{2}}\right].
  \end{split}
\end{equation*}
Then we have the desired result.
\end{proof}

Next we introduce two perturbed systems for any $i\in\mathcal{C}_q$: a perturbed closed-loop system and a perturbed limiting system. Let us begin with the perturbed closed-loop system.

For the agent $\mathcal{A}_i$, consider an admissible alternative control $v^i\in\mathcal{U}$ and introduce the corresponding dynamics
\begin{equation}\label{widetildex^i}
  \left\{
  \begin{split}
    &d\widetilde{x}_t^i=(A\widetilde{x}_t^i+Bv_t^i+D\widetilde{z}_t^i)dt+\Sigma dw_t^i+\Sigma_0 dW_t^0,\\
    &\widetilde{x}_0^i=x_0^i,
  \end{split}
  \right.
\end{equation}
where $\widetilde{z}_t^i\triangleq\frac{1}{N}\sum_{l=1}^N m_{ql}\frac{1}{|\mathcal{C}_l|}\sum_{j\in\mathcal{C}_l}\widetilde{x}_t^j$, and other agents keep the control $u^{oj}$, $j\neq i$, i.e.
\begin{equation}\label{widetildex^j}
  \left\{
  \begin{split}
    &d\widetilde{x}_t^j=\left[(A-\frac{B^2}{2R}f_t)\widetilde{x}_t^j-\frac{B^2}{2R}\overline{g}_t^l+D\widetilde{z}_t^j\right]dt+\Sigma dw_t^j+\Sigma_0 dW_t^0,\\
    &\widetilde{x}_0^j=x_0^j.
  \end{split}
  \right.
\end{equation}
Since $\widetilde{z}_t^j=\widetilde{z}_t^k$ for any $j,k\in \mathcal{C}_l$, denote $\widetilde{z}_t^l=\widetilde{z}_t^j$ for all $j\in \mathcal{C}_l$. The cost function of $\mathcal{A}_i$ is given by
\begin{equation*}
  J_i(v^i,u^{-oi})=E\left\{\int_0^T\left[Q(\widetilde{x}_t^i-\widetilde{\nu}_t^i)^2+R(v_t^i)^2\right]dt+Q_T(\widetilde{x}_T^i-\widetilde{\nu}_T^i)^2\right\},\quad \widetilde{\nu}_t^i\triangleq H(\widetilde{z}_t^i+\eta).
\end{equation*}

By (\ref{y^i}), considering the admissible alternative control $v^i\in\mathcal{U}$, we also introduce the perturbed limiting system of agent $\mathcal{A}_i$ as follows:
\begin{equation*}
\left\{
  \begin{split}
     &d\widetilde{y}_t^i=(A\widetilde{y}_t^i+Bv_t^i+D\overline{z}_t^q)dt+\Sigma dw_t^i+\Sigma_0dW_t^0, \\
      &\widetilde{y}_0^i=x_0^i.
  \end{split}
  \right.
\end{equation*}
The corresponding cost function of $\mathcal{A}_i$ is given by
\begin{equation*}
  J_i^*(v^i)=E\left\{\int_0^T\left[Q(\widetilde{y}_t^i-\overline{\nu}_t^q)^2+R(v_t^i)^2\right]dt+Q_T(\widetilde{y}_T^i-\overline{\nu}_T^q)^2\right\},\quad \overline{\nu}_t^q\triangleq H(\overline{z}_t^q+\eta).
\end{equation*}

Now we give some estimation results for $\widetilde{z}_t^l$ and $\widetilde{x}_t^i$.
\begin{lemma}\label{bounded3} For the dynamics (\ref{widetildex^i}) and (\ref{widetildex^j}), it holds that
$$\max\limits_{1\leq l\leq N}\sup\limits_{0\leq t\leq T}E|\widetilde{z}_t^l|^2\leq C\left(1+\frac{1}{N|\mathcal{C}_q|}E\int_0^T|v_t^i|^2ds\right),\quad \sup\limits_{0\leq t\leq T}E|\widetilde{x}_t^i|^2\leq C\left(1+E\int_0^T|v_t^i|^2ds\right),$$
where the constant $C$ in second conclusion is independent of $i$.
\end{lemma}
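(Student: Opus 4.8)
The plan is to derive both bounds from a single Gronwall estimate on the grand average of squared states, exploiting that the deviating control $v^i$ enters the aggregate dynamics only through agent $\mathcal{A}_i$, whose weight in every cluster and network average is diminished by $\frac{1}{N|\mathcal{C}_q|}$. First I would set
\[
S_t\triangleq\frac{1}{N}\sum_{l=1}^N\frac{1}{|\mathcal{C}_l|}\sum_{j\in\mathcal{C}_l}|\widetilde{x}_t^j|^2,
\]
and record the elementary bound $|\widetilde{z}_t^l|^2\leq S_t$ valid for every $l$: since $|m_{lk}|\leq 1$, two successive applications of Jensen's (Cauchy-Schwarz) inequality to the network average and to the cluster averages give $|\widetilde{z}_t^l|^2=\big|\frac{1}{N}\sum_k m_{lk}\frac{1}{|\mathcal{C}_k|}\sum_{n\in\mathcal{C}_k}\widetilde{x}_t^n\big|^2\leq \frac{1}{N}\sum_k\frac{1}{|\mathcal{C}_k|}\sum_{n\in\mathcal{C}_k}|\widetilde{x}_t^n|^2=S_t$. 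This is what allows the network coupling to be folded into the Gronwall loop instead of being treated as external forcing.

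For the first estimate I would square the integral forms of \eqref{widetildex^i} and \eqref{widetildex^j}, bound the drift integrals by Cauchy-Schwarz and the stochastic integrals by the BDG inequality, and then form the weighted average defining $S_t$. The decisive point is that the control term $Bv^i$ appears only in the $j=i$ summand, which carries weight $\frac{1}{N|\mathcal{C}_q|}$; each coupling term $|\widetilde{z}_s^l|^2$ is replaced by $S_s$, the $\overline{g}$-contributions are controlled by Lemma \ref{bounded1}, and the noise together with the initial data contribute only $N$-independent constants (using $E|x_0^j|^2\leq C_\sigma+C_\mu^2$ and that the weights sum to $1$). This produces
\[
ES_t\leq C\Big[1+\int_0^t ES_s\,ds+\frac{1}{N|\mathcal{C}_q|}E\int_0^T|v_s^i|^2\,ds\Big],
\]
and Gronwall's inequality together with $E|\widetilde{z}_t^l|^2\leq ES_t$ yields the first bound, uniformly in $l$ and $N$.

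For the second estimate I would bootstrap. Squaring \eqref{widetildex^i} directly and applying Cauchy-Schwarz, the BDG inequality and Gronwall's inequality gives
\[
\sup_{0\leq t\leq T}E|\widetilde{x}_t^i|^2\leq C\Big[1+\int_0^T E|\widetilde{z}_s^q|^2\,ds+E\int_0^T|v_s^i|^2\,ds\Big],
\]
where now $v^i$ enters with full weight. Inserting the first conclusion to bound $E|\widetilde{z}_s^q|^2$ and using $\frac{1}{N|\mathcal{C}_q|}\leq 1$ collapses the right-hand side to $C\big(1+E\int_0^T|v_s^i|^2\,ds\big)$ with $C$ independent of $i$. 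The main obstacle, and the only step that is not routine, is the bookkeeping in the first estimate: one must carefully verify that the deviating control is isolated in a single $\frac{1}{N|\mathcal{C}_q|}$-weighted term while the self-consistent average $\widetilde{z}$ is absorbed into the Gronwall loop via $|\widetilde{z}_t^l|^2\leq S_t$, since precisely this separation produces the diminished prefactor $\frac{1}{N|\mathcal{C}_q|}$ rather than a trivial $O(1)$ bound.
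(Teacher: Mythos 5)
Your proposal is correct and follows essentially the same route as the paper: form the cluster-and-network weighted average of squared states, observe that the deviating control $v^i$ enters that average only through the single summand $j=i$ and hence carries the prefactor $\frac{1}{N|\mathcal{C}_q|}$, absorb the coupling $|\widetilde{z}_s^l|^2$ into the Gronwall loop via the Jensen bound, control $\overline{g}$ by Lemma \ref{bounded1} and the stochastic integrals by BDG, and then bootstrap the individual bound on $E|\widetilde{x}_t^i|^2$ from the first conclusion exactly as in the proof of Lemma \ref{bounded2}. No substantive differences.
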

\begin{proof}
Taking square on both sides of (\ref{widetildex^i}), we have
\begin{equation*}
 |\widetilde{x}_t^i|^2\leq C\left[|x_0^i|^2+\int_0^t \left(|\widetilde{x}_s^i|^2+|v_s^i|^2+|\widetilde{z}_s^q|^2\right)ds +\left|\int_0^t\Sigma dw_s^i\right|^2+\left|\int_0^t\Sigma_0 dW_s^0\right|^2\right].
\end{equation*}
For any $j\in\mathcal{C}_q$ with $j\neq i$, taking square on both sides of (\ref{widetildex^j}), we derive
\begin{equation*}
 |\widetilde{x}_t^j|^2\leq C\left[|x_0^j|^2+\int_0^t \left(|\widetilde{x}_s^j|^2+|\overline{g}_s^q|^2+|\widetilde{z}_s^q|^2\right)ds +\left|\int_0^t\Sigma dw_s^j\right|^2+\left|\int_0^t\Sigma_0 dW_s^0\right|^2\right].
\end{equation*}
Then
\begin{align}\label{sumwidetildexq}
  \frac{1}{|\mathcal{C}_q|}\sum\limits_{j\in\mathcal{C}_q}|\widetilde{x}_t^j|^2\leq & C\left\{\frac{1}{|\mathcal{C}_q|}\sum\limits_{j\in\mathcal{C}_q}|x_0^j|^2+\int_0^t \left[\frac{1}{|\mathcal{C}_q|}\sum\limits_{j\in\mathcal{C}_q}|\widetilde{x}_s^j|^2+|\overline{g}_s^q|^2+|\widetilde{z}_s^q|^2+\frac{1}{|\mathcal{C}_q|}\left(|v_s^i|^2-|\overline{g}_s^q|^2\right)\right]ds\right.\nonumber\\
  &\left.+\frac{1}{|\mathcal{C}_q|}\sum\limits_{j\in\mathcal{C}_q}\left|\int_0^t\Sigma dw_s^j\right|^2+\left|\int_0^t\Sigma_0 dW_s^0\right|^2\right\}.
\end{align}
For any $j\in\mathcal{C}_l$ with $l\neq q$, taking square and summation of $j\in\mathcal{C}_l$ on both sides of (\ref{widetildex^j}), we get
\begin{align}\label{sumwidetildexl}
  \frac{1}{|\mathcal{C}_l|}\sum\limits_{j\in\mathcal{C}_l}|\widetilde{x}_t^j|^2\leq & C\left[\frac{1}{|\mathcal{C}_l|}\sum\limits_{j\in\mathcal{C}_l}|x_0^j|^2+\int_0^t \left(\frac{1}{|\mathcal{C}_l|}\sum\limits_{j\in\mathcal{C}_l}|\widetilde{x}_s^j|^2+|\overline{g}_s^l|^2+|\widetilde{z}_s^l|^2\right)ds\right.\nonumber\\
  &\left.+\frac{1}{|\mathcal{C}_l|}\sum\limits_{j\in\mathcal{C}_l}\left|\int_0^t\Sigma dw_s^j\right|^2+\left|\int_0^t\Sigma_0 dW_s^0\right|^2\right].
\end{align}
By (\ref{sumwidetildexq}) and (\ref{sumwidetildexl}),
\begin{equation*}
\begin{split}
  \frac{1}{N}\sum\limits_{k=1}^N\frac{1}{|\mathcal{C}_k|}\sum\limits_{n\in\mathcal{C}_k}|\widetilde{x}_t^n|^2\leq & C\left\{\frac{1}{N}\sum\limits_{k=1}^N\frac{1}{|\mathcal{C}_k|}\sum\limits_{n\in\mathcal{C}_k}|\widetilde{x}_0^n|^2+\int_0^t \left[\frac{1}{N}\sum\limits_{k=1}^N\frac{1}{|\mathcal{C}_k|}\sum\limits_{n\in\mathcal{C}_k}|\widetilde{x}_s^n|^2+\frac{1}{N}\sum\limits_{k=1}^N|\overline{g}_s^k|^2+\frac{1}{N}\sum\limits_{k=1}^N|\widetilde{z}_s^k|^2\right.\right.\\
  &\left.\left.+\frac{1}{N|\mathcal{C}_q|}\left(|v_s^i|^2+|\overline{g}_s^q|^2\right)\right]ds+\frac{1}{N}\sum\limits_{k=1}^N\frac{1}{|\mathcal{C}_k|}\sum\limits_{n\in\mathcal{C}_k}\left|\int_0^t\Sigma dw_s^n\right|^2+\left|\int_0^t\Sigma_0 dW_s^0\right|^2\right\}.
\end{split}
\end{equation*}
Taking expectation on both sides and applying BDG inequality, we have
\begin{equation*}
   E\frac{1}{N}\sum\limits_{k=1}^N\frac{1}{|\mathcal{C}_k|}\sum\limits_{n\in\mathcal{C}_k}|\widetilde{x}_t^n|^2\leq C\left[1+\int_0^tE \frac{1}{N}\sum\limits_{k=1}^N\frac{1}{|\mathcal{C}_k|}\sum\limits_{n\in\mathcal{C}_k}|\widetilde{x}_s^n|^2ds+\frac{1}{N|\mathcal{C}_q|}E\int_0^T\left(|v_s^i|^2+|\overline{g}_s^q|^2\right)ds\right].
\end{equation*}
By Lemma \ref{bounded1} and applying Gronwall's inequality, for any $l\in\{1,\ldots,N\}$, we have
\begin{equation*}
\begin{split}
 E|\widetilde{z}_t^l|^2&\leq E\frac{1}{N}\sum\limits_{k=1}^N\frac{1}{|\mathcal{C}_k|}\sum\limits_{n\in\mathcal{C}_k}|\widetilde{x}_t^n|^2\leq C\left[1+\frac{1}{N|\mathcal{C}_q|}E\int_0^T\left(|v_s^i|^2+|\overline{g}_s^q|^2\right)ds\right]\\
  &\leq C\left(1+\frac{1}{N|\mathcal{C}_q|}E\int_0^T|v_s^i|^2ds\right).
\end{split}
\end{equation*}
Then we derive the first result. The other result can be proved similar to Lemma \ref{bounded2}.
\end{proof}
Since the parameters of cost function is positive (nonnegative),
$$
E\int_0^TR|v_s^i|^2ds\leq J_i(v^i,u^{-oi}),
$$
it is enough to consider the control $v^i$ satisfying
\begin{equation}\label{boundv^i}
  E\int_0^T|v_s^i|^2ds\leq \frac{1}{R}\left[J_i^*(\overline{u}^i)+O({\delta_K}^{\frac{1}{2}})\right].
\end{equation}
Otherwise, applying Proposition \ref{error2} yields
$$
E\int_0^TR|v_s^i|^2ds>J_i^*(\overline{u}^i)+O({\delta_K}^{\frac{1}{2}})\geq J_i(u^{oi},u^{-oi}),
$$
hence the $\epsilon$-Nash property holds trivially.

The following proposition presents the approximation between the perturbed limiting system and the perturbed closed-loop system for agent $\mathcal{A}_i$.
\begin{proposition}\label{error3}
For fixed $i\in\mathcal{C}_q$, it holds that
\begin{enumerate}[(i)]
\item $\sup\limits_{0\leq t\leq T}E\left|\widetilde{z}_t^q-\overline{z}_t^q\right|^2=O(\delta_K)$;
\item $\sup\limits_{0\leq t\leq T}E \left||\widetilde{z}_t^q|^2-|\overline{z}_t^q|^2\right|=O({\delta_K}^{\frac{1}{2}})$;
\item $\sup\limits_{0\leq t\leq T}E\left|\widetilde{x}_t^i-\widetilde{y}_t^i\right|^2=O(\delta_K)$;
\item $\sup\limits_{0\leq t\leq T}E \left||\widetilde{x}_t^i|^2-|\widetilde{y}_t^i|^2\right|=O({\delta_K}^{\frac{1}{2}})$.
\end{enumerate}
\end{proposition}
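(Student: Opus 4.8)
The plan is to follow the architecture of the proof of Proposition~\ref{error1} almost verbatim, the only genuinely new feature being that the single agent $\mathcal{A}_i$ now runs the alternative control $v^i$ in place of $u^{oi}$. Throughout I may assume that $v^i$ obeys \eqref{boundv^i}, so that $E\int_0^T|v_t^i|^2dt\le C$ and Lemma~\ref{bounded3} furnishes $N$-uniform second-moment bounds for $\widetilde{x}^i$ and $\widetilde{z}^l$; if this fails, the $\epsilon$-Nash inequality holds trivially.

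For (i) I would reintroduce the auxiliary system $\widehat{x}^l$ of \eqref{widetildexl} and its cluster average $\widehat{z}_t^q\triangleq\frac1N\sum_l m_{ql}\widehat{x}_t^l$, and split $E|\widetilde{z}_t^q-\overline{z}_t^q|^2\le 2E|\overline{z}_t^q-\widehat{z}_t^q|^2+2E|\widehat{z}_t^q-\widetilde{z}_t^q|^2$ exactly as in \eqref{triangle}. The first term is literally the quantity already bounded in \eqref{chaos1} by $C(E_N^2+(E_N')^2)$, because $\widehat{z}^q$ does not see the perturbation. For the second term I set $\widetilde{e}_t^l\triangleq\widehat{x}_t^l-\frac{1}{|\mathcal{C}_l|}\sum_{j\in\mathcal{C}_l}\widetilde{x}_t^j$, so that $\widehat{z}_t^q-\widetilde{z}_t^q=\frac1N\sum_l m_{ql}\widetilde{e}_t^l$. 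For every cluster $l\ne q$ the agents still use $u^{oj}$, whence $\widetilde{e}^l$ solves precisely the equation \eqref{e_t^l} treated earlier and contributes the familiar $C/\min_l|\mathcal{C}_l|$ bound of \eqref{chaos2}. Only in cluster $q$ does averaging $\widetilde{x}^i$ create an extra source term of size $\frac{1}{|\mathcal{C}_q|}\bigl(|v_t^i|+|f_t\widetilde{x}_t^i|+|\overline{g}_t^q|\bigr)$; after forming $\frac1N\sum_l E|\widetilde{e}_t^l|^2$ and applying Gronwall's inequality, this single-cluster term is dominated by $\frac{C}{\min_l|\mathcal{C}_l|}\bigl(1+E\int_0^T|v_t^i|^2dt\bigr)$, which by \eqref{boundv^i} is $O(\delta_K)$. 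Adding the two pieces gives $\sup_{t}E|\widetilde{z}_t^q-\overline{z}_t^q|^2=O(\delta_K)$.

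Statement (ii) then drops out as in Proposition~\ref{error1}: I write $|\widetilde{z}_t^q|^2-|\overline{z}_t^q|^2=|\widetilde{z}_t^q-\overline{z}_t^q|^2+2\overline{z}_t^q(\widetilde{z}_t^q-\overline{z}_t^q)$, apply Cauchy--Schwarz and use Lemma~\ref{bounded1} together with (i). For (iii) the crucial simplification is that $\widetilde{x}^i$ in \eqref{widetildex^i} and the perturbed limiting state $\widetilde{y}^i$ share the same control $v^i$, the same Brownian increments $\Sigma dw^i+\Sigma_0dW^0$ and the same initial value $x_0^i$; subtracting the two dynamics cancels all of these, leaving $\widetilde{x}_t^i-\widetilde{y}_t^i=\int_0^t\bigl[A(\widetilde{x}_s^i-\widetilde{y}_s^i)+D(\widetilde{z}_s^q-\overline{z}_s^q)\bigr]ds$. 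Squaring and Gronwall's inequality then reduce the claim to (i), yielding $O(\delta_K)$. Finally (iv) follows from (iii) via $E\bigl||\widetilde{x}_t^i|^2-|\widetilde{y}_t^i|^2\bigr|\le E|\widetilde{x}_t^i-\widetilde{y}_t^i|^2+2(E|\widetilde{y}_t^i|^2)^{1/2}(E|\widetilde{x}_t^i-\widetilde{y}_t^i|^2)^{1/2}$, once one notes $E\sup_t|\widetilde{y}_t^i|^2\le C$ under \eqref{boundv^i} by the estimate of Lemma~\ref{bounded3}.

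I expect the main obstacle to be the bookkeeping for the deviating agent in cluster $q$ in part (i): one must confirm that the single perturbed trajectory enters the empirical mean field only through the weight $\frac{1}{N|\mathcal{C}_q|}$, and that, under the a priori bound \eqref{boundv^i} on $v^i$, its effect is dominated by $1/\min_l|\mathcal{C}_l|$ and is thus absorbed into $O(\delta_K)$. The cluster-chaos estimate for $l\ne q$ and the graphon-approximation estimate are identical to those in Proposition~\ref{error1} and require no change.
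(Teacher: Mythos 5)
Your argument is correct and reaches all four conclusions, but for part (i) you route the triangle inequality through a different intermediate process than the paper does. You split $\widetilde{z}^q-\overline{z}^q$ as $(\overline{z}^q-\widehat{z}^q)+(\widehat{z}^q-\widetilde{z}^q)$, reusing the graphon-approximation bound \eqref{chaos1} verbatim and then redoing the chaos estimate for $\widehat{z}^q-\widetilde{z}^q$ with the deviating agent's source term folded into the $q$th cluster; after averaging over clusters and applying Gronwall's inequality this yields $O\bigl(1/\min_l|\mathcal{C}_l|\bigr)\bigl(1+E\int_0^T|v_s^i|^2ds\bigr)$, which is $O(\delta_K)$ under \eqref{boundv^i}. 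The paper instead splits through the \emph{unperturbed closed-loop} mean field, $\widetilde{z}^q-\overline{z}^q=(\widetilde{z}^q-z^{oq})+(z^{oq}-\overline{z}^q)$, invoking Proposition \ref{error1}(i) as a black box for the second piece; for the first piece it observes that $\widetilde{z}^l-z^{ol}$ satisfies a pathwise integral equation in which \emph{all} stochastic integrals (idiosyncratic and common) cancel exactly, so the difference is driven only by the $\frac{m_{lq}}{N|\mathcal{C}_q|}$-weighted perturbation and one obtains the sharper bound $C/(N|\mathcal{C}_q|^2)$ in \eqref{widetildez_t^q-z_t^oq}. Both decompositions are valid and give $O(\delta_K)$; the paper's is slightly cleaner because the noise cancellation isolates the perturbation, whereas yours re-derives the law-of-large-numbers fluctuation alongside it. One small imprecision in your write-up: for $l\neq q$ the process $\widetilde{e}^l$ does not solve "precisely" equation \eqref{e_t^l} in isolation, since its drift couples to $\widetilde{e}^q$ through $\frac{D}{N}\sum_k m_{lk}\widetilde{e}^k$; your subsequent step of summing over $l$ before applying Gronwall handles this coupling correctly, so it is a matter of phrasing rather than a gap. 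Parts (ii)--(iv) match the paper's (omitted) argument exactly.
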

\begin{proof}
Denote $\bar{\bar{x}}_t^l\triangleq \frac{1}{|\mathcal{C}_l|}\sum_{j\in\mathcal{C}_l}\widetilde{x}_t^j$, $l\in\{1,\ldots,N\}$. For each $l\neq q$,
\begin{equation}\label{barbarx^l}
  \left\{
  \begin{split}
   &d\bar{\bar{x}}_t^l=\left[(A-\frac{B^2}{2R}f_t)\bar{\bar{x}}_t^l
   -\frac{B^2}{2R}\overline{g}_t^l+D\widetilde{z}_t^l\right]dt
   +\Sigma\frac{1}{|\mathcal{C}_l|}\sum\limits_{j\in\mathcal{C}_l}dw_t^j+\Sigma_0dW_t^0,\\
   &\bar{\bar{x}}_0^l=\frac{1}{|\mathcal{C}_l|}\sum_{j\in\mathcal{C}_l}x_0^j
  \end{split}
  \right.
\end{equation}
and
\begin{equation}\label{barbarx^q}
  \left\{
  \begin{split}
  &d\bar{\bar{x}}_t^q=\left[(A-\frac{B^2}{2R}f_t)\bar{\bar{x}}_t^q-\frac{B^2}{2R}\overline{g}_t^q
  +D\widetilde{z}_t^q+\frac{1}{|\mathcal{C}_q|}\left(Bv_t^i+\frac{B^2}{2R}f_t\widetilde{x}_t^i
  +\frac{B^2}{2R}\overline{g}_t^q\right)\right]dt+\Sigma\frac{1}{|\mathcal{C}_q|}\sum\limits_{j\in\mathcal{C}_q}dw_t^j
  +\Sigma_0dW_t^0,\\
   &\bar{\bar{x}}_0^q=\frac{1}{|\mathcal{C}_q|}\sum_{j\in\mathcal{C}_q}x_0^j.
  \end{split}
  \right.
\end{equation}
Recall that for $l\in\{1,\ldots,N\}$,
$$
\widetilde{z}_t^l=\frac{1}{N}\sum_{k=1}^N m_{lk}\frac{1}{|\mathcal{C}_k|}\sum_{n\in\mathcal{C}_k}\widetilde{x}_t^n=\frac{1}{N}\sum_{k=1}^N m_{lk}\bar{\bar{x}}_t^k.
$$
By (\ref{barbarx^l}) and (\ref{barbarx^q}), we derive
\begin{equation}\label{widetildeztl}
  \left\{
  \begin{split}
  &d\widetilde{z}_t^l=\left[(A-\frac{B^2}{2R}f_t)\widetilde{z}_t^l-\frac{B^2}{2R}\frac{1}{N}\sum_{k=1}^N m_{lk}\overline{g}_t^k+D\frac{1}{N}\sum_{k=1}^N m_{lk}\widetilde{z}_t^k+\frac{m_{lq}}{N|\mathcal{C}_q|}\left(Bv_t^i+\frac{B^2}{2R}f_t\widetilde{x}_t^i+\frac{B^2}{2R}\overline{g}_t^q\right)\right]dt\\
  &\qquad +\Sigma\frac{1}{N}\sum_{k=1}^N m_{lk}\frac{1}{|\mathcal{C}_k|}\sum\limits_{n\in\mathcal{C}_k}dw_t^n+\Sigma_0\frac{1}{N}\sum_{k=1}^N m_{lk}dW_t^0\\
   &\widetilde{z}_0^l=\frac{1}{N}\sum_{k=1}^N m_{lk}\frac{1}{|\mathcal{C}_k|}\sum_{n\in\mathcal{C}_k}x_0^n.
  \end{split}
  \right.
\end{equation}
From (\ref{xoj}), one has
\begin{equation}\label{ztol}
  \left\{
  \begin{split}
  &dz_t^{ol}=\left[(A-\frac{B^2}{2R}f_t)z_t^{ol}-\frac{B^2}{2R}\frac{1}{N}\sum_{k=1}^N m_{lk}\overline{g}_t^k+D\frac{1}{N}\sum_{k=1}^N m_{lk}z_t^{ok}\right]dt\\
  &\qquad +\Sigma\frac{1}{N}\sum_{k=1}^N m_{lk}\frac{1}{|\mathcal{C}_k|}\sum\limits_{n\in\mathcal{C}_k}dw_t^n+\Sigma_0\frac{1}{N}\sum_{k=1}^N m_{lk}dW_t^0,\\
   &z_0^{ol}=\frac{1}{N}\sum_{k=1}^N m_{lk}\frac{1}{|\mathcal{C}_k|}\sum_{n\in\mathcal{C}_k}x_0^n.
  \end{split}
  \right.
\end{equation}
Then it follows from (\ref{widetildeztl}) and (\ref{ztol}) that
\begin{equation*}
  \widetilde{z}_t^l-z_t^{ol}=\int_0^t\left[(A-\frac{B^2}{2R}f_s)(\widetilde{z}_s^l-z_s^{ol})+D\frac{1}{N}\sum_{k=1}^N m_{lk}(\widetilde{z}_s^k-z_s^{ok})+\frac{m_{lq}}{N|\mathcal{C}_q|}\left(Bv_s^i+\frac{B^2}{2R}f_s\widetilde{x}_s^i
  +\frac{B^2}{2R}\overline{g}_s^q\right)\right]ds.
\end{equation*}
Taking square and summation over $l$ from $1$ to $N$ on both sides, we have
\begin{equation*}
  \sum_{l=1}^N|\widetilde{z}_t^l-z_t^{ol}|^2\leq C\int_0^t \left[\sum_{l=1}^N|\widetilde{z}_s^l-z_s^{ol}|^2+\frac{1}{N|\mathcal{C}_q|^2}\left(|v_s^i|^2+|\widetilde{x}_s^i|^2+|\overline{g}_s^q|^2\right)\right]ds
\end{equation*}
Taking expectation on both sides and applying Gronwall's inequality, one has
\begin{equation}\label{++}
  E\sum_{l=1}^N|\widetilde{z}_t^l-z_t^{ol}|^2\leq C\frac{1}{N|\mathcal{C}_q|^2}E\int_0^T\left(|v_s^i|^2+|\widetilde{x}_s^i|^2+|\overline{g}_s^q|^2\right)ds\leq C\frac{1}{N|\mathcal{C}_q|^2}\left(1+E\int_0^T|v_s^i|^2ds\right).
\end{equation}
It is easy to check that $J_i^*(\overline{u}^i)$ is bounded uniformly with respect to $i$ and so \eqref{boundv^i} and \eqref{++} lead to
\begin{equation}\label{widetildez_t^q-z_t^oq}
  \sup_{0\leq t\leq T}E|\widetilde{z}_t^q-z_t^{oq}|^2\leq C\frac{1}{N|\mathcal{C}_q|^2}\left(1+E\int_0^T|v_s^i|^2ds\right)\leq C\frac{1}{N|\mathcal{C}_q|^2}.
\end{equation}
By (\ref{overlinez_t^q-z_t^oq}) and (\ref{widetildez_t^q-z_t^oq}), and the triangle inequality, one has
\begin{equation*}
  \begin{split}
    \sup_{0\leq t\leq T}E\left|\widetilde{z}_t^q-\overline{z}_t^q\right|^2&\leq 2\sup_{0\leq t\leq T}E\left|\widetilde{z}_t^q-z_t^{oq}\right|^2+2\sup_{0\leq t\leq T}E\left|z_t^{oq}-\overline{z}_t^q\right|^2\\
    &\leq C\left(\frac{1}{N|\mathcal{C}_q|^2}+{E_N}^2+(E_N')^2+\frac{1}{\min_l|\mathcal{C}_l|}\right)\\
    &\leq C\left({E_N}^2+(E_N')^2+\frac{1}{\min_l|\mathcal{C}_l|}\right)\\
    &=C\delta_K.
  \end{split}
\end{equation*}
Then the conclusion (i) holds. The rest conclusions (ii)-(iv) can be obtained similar to the proof of Proposition \ref{error1}.
\end{proof}
By Lemma \ref{bounded3} and Proposition \ref{error3}, we can deduce the following proposition.
\begin{proposition}\label{error4}
For any $v^i\in\mathcal{U}$ with $i\in\{1,\ldots,K\}$, it holds that
$$
\left|J_i(v^i,u^{-oi})-J_i^*(v^i)\right|=O\left({\delta_K}^{\frac{1}{2}}\right).
$$
\end{proposition}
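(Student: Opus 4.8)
The plan is to follow the argument of Proposition~\ref{error2} verbatim in structure, merely replacing the closed-loop/limiting comparison supplied by Proposition~\ref{error1} with the perturbed comparison supplied by Proposition~\ref{error3}. The decisive observation—and the reason the estimate holds at all—is that the running control cost $R(v_t^i)^2$ appears \emph{identically} in $J_i(v^i,u^{-oi})$ and in $J_i^*(v^i)$, because the same alternative control $v^i$ drives both the perturbed closed-loop state $\widetilde{x}^i$ and the perturbed limiting state $\widetilde{y}^i$. Consequently these terms cancel and
\begin{align*}
J_i(v^i,u^{-oi})-J_i^*(v^i)=&\,QE\int_0^T\big[(\widetilde{x}_t^i-\widetilde{\nu}_t^i)^2-(\widetilde{y}_t^i-\overline{\nu}_t^q)^2\big]dt\\
&+Q_TE\big[(\widetilde{x}_T^i-\widetilde{\nu}_T^i)^2-(\widetilde{y}_T^i-\overline{\nu}_T^q)^2\big],
\end{align*}
so no difference involving $v^i$ ever has to be controlled directly.

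Next I would expand each difference of squares. Recalling $\widetilde{\nu}_t^i=H(\widetilde{z}_t^q+\eta)$ and $\overline{\nu}_t^q=H(\overline{z}_t^q+\eta)$, a routine regrouping (adding and subtracting the mixed term $\widetilde{y}_t^i\widetilde{\nu}_t^i$) rewrites the integrand as a linear combination of $|\widetilde{x}_t^i|^2-|\widetilde{y}_t^i|^2$, $|\widetilde{z}_t^q|^2-|\overline{z}_t^q|^2$, $\widetilde{z}_t^q-\overline{z}_t^q$, $\widetilde{\nu}_t^i(\widetilde{x}_t^i-\widetilde{y}_t^i)$ and $\widetilde{y}_t^i(\widetilde{z}_t^q-\overline{z}_t^q)$. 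Taking expectations, the supremum over $t\in[0,T]$, and applying the triangle and Cauchy--Schwarz inequalities to the two mixed terms, the whole expression is bounded by a constant multiple of
\begin{gather*}
\sup_t E\big||\widetilde{x}_t^i|^2-|\widetilde{y}_t^i|^2\big|,\qquad \sup_t E\big||\widetilde{z}_t^q|^2-|\overline{z}_t^q|^2\big|,\\
\big(\sup_t E|\widetilde{z}_t^q-\overline{z}_t^q|^2\big)^{1/2},\qquad \big(\sup_t E|\widetilde{x}_t^i-\widetilde{y}_t^i|^2\big)^{1/2},
\end{gather*}
each multiplied by an $L^2$-norm of $\widetilde{\nu}^i$ or $\widetilde{y}^i$, with a factor $T$ absorbing the time integral. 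By Proposition~\ref{error3}, the second and fourth quantities are directly $O(\delta_K^{1/2})$, while the first and third are the square roots of the $O(\delta_K)$ estimates in parts (i) and (iii), hence again $O(\delta_K^{1/2})$.

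The only point needing genuine care is the uniform boundedness of the coefficient factors $\big(E|\widetilde{\nu}_t^i|^2\big)^{1/2}$ and $\big(E|\widetilde{y}_t^i|^2\big)^{1/2}$ that multiply these small differences, for otherwise the $O(\delta_K^{1/2})$ gain could be destroyed. Here I would invoke the standing a priori restriction \eqref{boundv^i} on $v^i$ together with the uniform-in-$i$ boundedness of $J_i^*(\overline{u}^i)$: these make $\frac{1}{N|\mathcal{C}_q|}E\int_0^T|v_t^i|^2dt$ bounded, so Lemma~\ref{bounded3} gives $\sup_t E|\widetilde{z}_t^q|^2\le C$ (whence $\sup_t E|\widetilde{\nu}_t^i|^2\le C$), while the linear dynamics of $\widetilde{y}^i$ with the bounded input $\overline{z}^q$ (Lemma~\ref{bounded1}) give $\sup_t E|\widetilde{y}_t^i|^2\le C$, all with constants independent of $i$ and $N$. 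Combining these uniform bounds with the $O(\delta_K^{1/2})$ estimates of Proposition~\ref{error3}, and treating the terminal ($t=T$) contribution by the same $\sup_t$ bounds, yields $|J_i(v^i,u^{-oi})-J_i^*(v^i)|=O(\delta_K^{1/2})$. Thus the main obstacle is bookkeeping rather than a new idea: one must verify that every coefficient standing in front of a $\delta_K$-small difference is uniformly bounded under \eqref{boundv^i}, so that these factors are not amplified.
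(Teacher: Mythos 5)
Your proposal is correct and follows exactly the route the paper intends: the paper omits this proof with the remark that it is ``similar to the one of Proposition \ref{error2}'', and your reconstruction---cancelling the identical $R(v_t^i)^2$ terms, expanding the remaining differences of squares, invoking Proposition \ref{error3} in place of Proposition \ref{error1}, and checking via \eqref{boundv^i} and Lemma \ref{bounded3} that the coefficient factors $E|\widetilde{\nu}_t^i|^2$ and $E|\widetilde{y}_t^i|^2$ stay uniformly bounded---is precisely that argument carried out in detail.
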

\begin{proof}
The proof is similar to the one of Proposition \ref{error2} and so we omit it here.
\end{proof}

Thus, we can conclude the main result in this section as follows.
\begin{theorem}\label{epsilonnashtheorem}
Under Assumptions \ref{finiteeigenvalues}, \ref{monotonicity} (or \ref{assumptionriccati}), \ref{assumptionmu} and \ref{approximation}, the set of strategies $(u^{o1},\ldots,u^{oK})$ given by (\ref{epsilonnashstrategy}) is an $\epsilon$-Nash equilibrium, with $\epsilon=O({\delta_K}^{\frac{1}{2}})$, where $\delta_K={E_N}^2+(E_N')^2+\frac{1}{\min_l|\mathcal{C}_l|}\rightarrow 0$, as $N\rightarrow\infty$, $\min_l|\mathcal{C}_l|\rightarrow\infty$.
\end{theorem}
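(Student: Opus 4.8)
The plan is to assemble the statement directly from the two approximation estimates already established, namely Proposition~\ref{error2} and Proposition~\ref{error4}, using the optimality of $\overline{u}^i$ for the auxiliary control Problem~\ref{limitcontrol} as the bridge between them. The entire architecture of the argument is the classical three-step sandwich of mean-field game theory: the cost incurred by the constructed strategy is close to the limiting optimal cost, the limiting optimal cost cannot exceed the limiting cost of any competitor, and the limiting cost of that competitor is close to its true cost in the finite system. Each ``closeness'' is quantified at rate $O(\delta_K^{1/2})$, and optimality supplies the middle inequality with no error term.

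Concretely, I would fix an agent $\mathcal{A}_i$ in some cluster $\mathcal{C}_q$ and an arbitrary admissible alternative $v^i\in\mathcal{U}$. First I would dispose of the trivial regime: if $v^i$ violates the bound \eqref{boundv^i}, then $E\int_0^T R|v_s^i|^2\,ds$ already exceeds $J_i(u^{oi},u^{-oi})$ up to an $O(\delta_K^{1/2})$ term, so the $\epsilon$-Nash inequality holds automatically; hence it suffices to treat those $v^i$ satisfying \eqref{boundv^i}, which is exactly the regime in which Lemma~\ref{bounded3} and Proposition~\ref{error3} (and therefore Proposition~\ref{error4}) apply with constants uniform in $i$. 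Then I would chain the estimates: by Proposition~\ref{error2},
\begin{equation*}
J_i(u^{oi},u^{-oi})\leq J_i^*(\overline{u}^i)+O(\delta_K^{1/2});
\end{equation*}
by the optimality of $\overline{u}^i$ for Problem~\ref{limitcontrol}, $J_i^*(\overline{u}^i)=\inf_{u^i\in\mathcal{U}}J_i^*(u^i)\leq J_i^*(v^i)$; and by Proposition~\ref{error4}, $J_i^*(v^i)\leq J_i(v^i,u^{-oi})+O(\delta_K^{1/2})$. Combining the three displays yields $J_i(u^{oi},u^{-oi})\leq J_i(v^i,u^{-oi})+O(\delta_K^{1/2})$, which is precisely Definition~\ref{epsilonnashdefinition} with $\epsilon=O(\delta_K^{1/2})$. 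Finally, Assumptions~\ref{assumptionmu} and \ref{approximation} give $E_N,E_N'\to 0$, so $\delta_K={E_N}^2+(E_N')^2+\frac{1}{\min_l|\mathcal{C}_l|}\to 0$ as $N\to\infty$ and $\min_l|\mathcal{C}_l|\to\infty$, forcing $\epsilon\to 0$.

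I do not expect any genuine obstacle in this final step; the substantive work is already absorbed into the supporting results, and what remains is a routine verification that the error bounds are uniform in the agent index $i$ and that the constants hidden in the $O(\cdot)$ notation do not depend on $N$ or $K$. The one point deserving care is that the optimality step requires $\overline{u}^i$ to be compared against the \emph{same} competitor $v^i$ that drives the perturbed systems of Proposition~\ref{error3}; this is automatic because $J_i^*$ is defined over all of $\mathcal{U}$ and $v^i$ is admissible. The deeper difficulty of the whole $\epsilon$-Nash analysis lives in Proposition~\ref{error1}, where the spectral decomposition \eqref{couplezgq}, the matrix-exponential bound of Lemma~\ref{norminfty}, Gronwall's inequality and the BDG inequality are used to control the step-graphon-to-graphon discrepancy through the sectional quantity $E_N$; relative to that, the present theorem is a clean corollary.
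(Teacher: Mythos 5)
Your proposal is correct and follows exactly the paper's own argument: the three-step sandwich $J_i(u^{oi},u^{-oi})\leq J_i^*(\overline{u}^i)+O(\delta_K^{1/2})\leq J_i^*(v^i)+O(\delta_K^{1/2})\leq J_i(v^i,u^{-oi})+O(\delta_K^{1/2})$ via Propositions \ref{error2} and \ref{error4} and the optimality of $\overline{u}^i$, with the preliminary reduction to competitors satisfying \eqref{boundv^i} handled just as the paper does before Proposition \ref{error3}. No substantive difference from the paper's proof.
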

\begin{proof}
For $i\in\{1,\ldots,K\}$, by Propositions \ref{error2} and \ref{error4} and the optimality of $\overline{u}^i$, we derive
\begin{equation*}
  \begin{split}
    J_i(u^{oi},u^{-oi})\leq& J_i^*(\overline{u}^i)+O({\delta_K}^{\frac{1}{2}})\leq J_i^*(v^i)+O({\delta_K}^{\frac{1}{2}})\\
    \leq& J_i(v^i,u^{-oi})+O({\delta_K}^{\frac{1}{2}}),
  \end{split}
\end{equation*}
which yields the result.
\end{proof}

\begin{remark}
In this paper, we assume that the graphon operators are of finite rank. It's noted that for graphon operators with infinite rank, their eigenvalues accumulate at zero \cite{Lovasz}.  Thus, for some infinite rank graphons, we try to discuss whether approximate results for the Nash equilibrium of Problem \ref{Knash} can be obtained by using finite number of their eigenvalues. We provide a preliminary discussion of this issue through the following example.
\end{remark}
\vspace{1ex}
\begin{example}
We consider the uniform attachment graphon $M(\alpha,\beta)=1-\max(\alpha,\beta)$ with $\alpha,\beta\in[0,1]$. The $N$-node graph with stepfunction type graphon $M^{[N]}$ satisfies Assumption \ref{approximation}, i.e., $\max_{q\in\{1\ldots,N\}}N\|(M-M^{[N]})\mathds{1}_{P_q}\|_1\rightarrow 0$, as $N\rightarrow\infty$. By \cite[Prop 10]{Gao5}, the eigen pairs $(f_l,\lambda_l)$, $l=1,2,3,\cdots$ of $M$ are given by $\big(\sqrt{2}\cos(\frac{k\pi(\cdot)}{2}),\frac{4}{k^2\pi^2}\big)$, $k=1,3,5,\cdots$. Let $$\widetilde{M}(\alpha,\beta)\triangleq \sum\limits_{k=1,3,5,7,9}\frac{4}{k^2\pi^2}\sqrt{2}\cos(\frac{k\pi\alpha}{2})\sqrt{2}\cos(\frac{k\pi\beta}{2}),$$ it's easy to check that $\big(\sqrt{2}\cos(\frac{k\pi(\cdot)}{2}),\frac{4}{k^2\pi^2}\big)$, $k=1,3,5,7,9$, are eigen pairs of $\widetilde{M}$, and rank $\widetilde{M}=5$. Using the triangle inequality yields
\begin{align}\label{unknownx}
\max_{q\in\{1\ldots,N\}} & N\|(\widetilde{M}-M^{[N]})\mathds{1}_{P_q}\|_1\\
& \leq \max_{q\in\{1\ldots,N\}}N\|(\widetilde{M}-M)\mathds{1}_{P_q}\|_1+\max_{q\in\{1\ldots,N\}}N\|(M-M^{[N]})\mathds{1}_{P_q}\|_1.
\end{align}
To avoid confusion with previous notations, let $\widetilde{G}\triangleq \max_{q\in\{1\ldots,N\}}N\|(\widetilde{M}-M)\mathds{1}_{P_q}\|_1$, $G_N\triangleq \max_{q\in\{1\ldots,N\}}N\|(M-M^{[N]})\mathds{1}_{P_q}\|_1$. Then $G_N\rightarrow 0$ as $N\rightarrow \infty$. Using the same construction of strategies in (\ref{epsilonnashstrategy}), we design the feedback strategy for agent $\mathcal{A}_i$ in $\mathcal{C}_q$ as follows
\begin{equation}
	\left\lbrace
	\begin{split}
		& \tilde{u}_t^{oi}=-\frac{B}{2R}(f_t\tilde{x}_t^{oi}+\tilde{g}_t^q),\\
		&\tilde{g}_t^q\triangleq \sum_{l=1}^5 g_t^l N\int_{P_q} f_l(\alpha)d\alpha+\mathring{g}_t N\int_{P_q}\Big(\mathds{1}-\sum\limits_{l=1}^5 \langle \mathds{1},f_l\rangle f_l\Big)(\alpha)d\alpha,
		\end{split}
		\right.
\end{equation}
where $\{f_l\}_{l=1}^5$ are eigenfunctions of $\widetilde{M}$, $g^l$ and $\mathring{g}$ are given by (\ref{FBSDEl}) and (\ref{mathringg}) respectively. From the discussion in this section, using (\ref{unknownx}) yields $J_i(\tilde{u}^{oi},\tilde{u}^{-oi})\leq J_i(v^i,\tilde{u}^{-oi})+O({\widetilde{\delta}_K}^{\frac{1}{2}})$, where $\widetilde{\delta}_K=(\widetilde{G}+G_N)^2+(E_N')^2+\frac{1}{\min_l|\mathcal{C}_l|}$. Next we calculate the error $\widetilde{G}$ below
\begin{align*}
& N\|(\widetilde{M}-M)\mathds{1}_{P_q}\|_1\\
= & N\int_0^1\Big|\int_{P_q}\sum_{k=11,13,\cdots} \frac{8}{k^2\pi^2}\cos(\frac{k\pi\alpha}{2})\cos(\frac{k\pi\beta}{2})d\beta\Big|d\alpha\\
\leq & N\int_0^1\sum_{k=11,13,\cdots} \frac{8}{k^2\pi^2} \Big|\cos(\frac{k\pi\alpha}{2})\Big|\cdot \Big|\int_{P_q}\cos(\frac{k\pi\beta}{2})d\beta\Big|d\alpha\\
= & N\sum_{k=11,13,\cdots} \frac{16}{k^3\pi^3} \Big|\sin(\frac{k\pi q}{2N})-\sin(\frac{k\pi (q-1)}{2N})\Big|\cdot \int_0^1\Big|\cos(\frac{k\pi\alpha}{2})\Big|d\alpha.
\end{align*}
Since
$$
\int_0^1\Big|\cos(\frac{k\pi\alpha}{2})\Big|d\alpha=\int_0^\frac{1}{k}\Big|\cos(\frac{k\pi\alpha}{2})\Big|d\alpha+\frac{k-1}{2}\int_\frac{1}{k}^\frac{3}{k}\Big|\cos(\frac{k\pi\alpha}{2})\Big|d\alpha=\frac{2}{\pi},
$$
we have
\begin{align*}
N\|(\widetilde{M}-M)\mathds{1}_{P_q}\|_1 & \leq N\sum_{k=11,13,\cdots} \frac{32}{k^3\pi^4} \Big|\sin(\frac{k\pi q}{2N})-\sin(\frac{k\pi (q-1)}{2N})\Big|\\
& =N\sum_{k=11,13,\cdots} \frac{64}{k^3\pi^4}\Big|\cos(\frac{k\pi (2q-1)}{4N})\Big|\cdot \Big|\sin(\frac{k\pi}{4N})\Big|\\
& \leq \sum_{k=11,13,\cdots}\frac{16}{k^2\pi^3}=\frac{16}{\pi^3}\Big(\frac{\pi^2}{8}-\sum_{k=1,3,5,7,9}\frac{1}{k^2}\Big)\approx 0.0257
\end{align*}
Then, one has $\widetilde{G}\leq 0.0258$. Since $G_N,E_N',\frac{1}{\min_l|\mathcal{C}_l|}\rightarrow 0$ as $N\rightarrow\infty$, $\min_l|\mathcal{C}_l|\rightarrow\infty$, it follows that ${\widetilde{\delta}_K}^{\frac{1}{2}}\leq 0.0258$ when $N$ and $\min_l|\mathcal{C}_l|$ are both large enough. Therefore, it is possible to obtain an approximation of the Nash equilibrium of Problem \ref{Knash} by choosing rank of $\widetilde{M}$ large enough and suitable parameters in the model. More advanced results are reserved for future work.
	
\end{example}

Finally, we provide several examples of a sequence of graphs and the limit graphon, which satisfy the corresponding assumptions.
\begin{example}[\cite{Gao4}]\label{graphs}
Consider a sinusoidal graphon $M(\alpha,\beta)=-\cos(2\pi(\alpha-\beta))$ with $\alpha,\beta\in[0,1]$, which has simple spectral characterizations (see \cite{Gao4}). The eigenfunctions of graphon $M$ associated with nonzero eigenvalues are $\sqrt{2}\cos(2\pi(\cdot))$ and $\sqrt{2}\sin(2\pi(\cdot))$, and the corresponding eigenvalues are $-\frac{1}{2}$, $-\frac{1}{2}$. Then Assumption \ref{finiteeigenvalues} holds. Moreover, we generate a sequence of graphs of size $N$ from the graphon $M$, by connecting nodes $i$ and $j$ with weight $M(\frac{i-1}{N},\frac{j-1}{N})$, for $i,j\in\{1,\ldots,N\}$, which means the adjacency matrices $M_N=[m_{ij}]=[M(\frac{i-1}{N},\frac{j-1}{N})]$. This sequence of graphs and the graphon $M$ satisfy Assumption \ref{approximation}. Indeed, letting $M^{[N]}$ denote the step function type graphons corresponding to $M_N$, we have that for each $q\in\{1,\ldots,N\}$,
\begin{align}\label{graphs1}
 \left((M-M^{[N]})\mathds{1}_{P_q}\right)(\alpha)=&\int_0^1(M-M^{[N]})(\alpha,\beta)\mathds{1}_{P_q}(\beta)d\beta\nonumber\\
 =&-\int_0^1\left[\cos(2\pi(\alpha-\beta))-\sum_{i=1}^N\sum_{j=1}^N\mathds{1}_{P_i}(\alpha)\mathds{1}_{P_j}(\beta)\cos(2\pi\frac{i-j}{N})\right]\mathds{1}_{P_q}(\beta)d\beta\nonumber\\
 =&-\int_{P_q}\left[\cos(2\pi(\alpha-\beta))-\sum_{i=1}^N\mathds{1}_{P_i}(\alpha)\cos(2\pi\frac{i-q}{N})\right]d\beta.
\end{align}
Applying the mean value theorem for integrals, there exists $\xi\in[\frac{q-1}{N},\frac{q}{N}]$ such that
\begin{align}\label{graphs2}
  &-\int_{P_q}\left[\cos(2\pi(\alpha-\beta))-\sum_{i=1}^N\mathds{1}_{P_i}(\alpha)\cos(2\pi\frac{i-q}{N})\right]d\beta\nonumber\\
  =&-\frac{1}{N}\left[\cos(2\pi(\alpha-\xi))-\sum_{i=1}^N\mathds{1}_{P_i}(\alpha)\cos(2\pi\frac{i-q}{N})\right]\nonumber\\
  =&-\frac{1}{N}\sum_{i=1}^N\mathds{1}_{P_i}(\alpha)\left[\cos(2\pi(\alpha-\xi))-\cos(2\pi\frac{i-q}{N})\right].
\end{align}
Combining (\ref{graphs1}) and (\ref{graphs2}), we have
\begin{equation}\label{graphs3}
  N\int_0^1\left|\left((M-M^{[N]})\mathds{1}_{P_q}\right)(\alpha)\right|d\alpha\leq\int_0^1\sum_{i=1}^N\mathds{1}_{P_i}(\alpha)\left|\cos(2\pi(\alpha-\xi))-\cos(2\pi\frac{i-q}{N})\right|d\alpha.
\end{equation}
For $\alpha\in [\frac{i-1}{N},\frac{i}{N}]$, one has
\begin{align}\label{graphs4}
  \left|\cos(2\pi(\alpha-\xi))-\cos(2\pi\frac{i-q}{N})\right|=& 2\left|\sin(\pi(\alpha-\xi+\frac{i-q}{N}))\right|\cdot\left|\sin(\pi(\alpha-\xi-\frac{i-q}{N}))\right|\nonumber\\
  \leq & 2\left|\sin(\pi(\alpha-\xi-\frac{i-q}{N}))\right|\nonumber\\
  \leq & 2\pi\left|\alpha-\xi-\frac{i-q}{N}\right|\nonumber\\
  \leq &\frac{4\pi}{N}.
\end{align}
It follows from (\ref{graphs3}) and (\ref{graphs4}) that
$$
N\int_0^1\left|\left((M-M^{[N]})\mathds{1}_{P_q}\right)(\alpha)\right|d\alpha\leq \frac{4\pi}{N}
$$
and so
$$
\max\limits_{q\in\{1\ldots,N\}}N\|(M-M^{[N]})\mathds{1}_{P_q}\|_1=\max\limits_{q\in\{1\ldots,N\}}N\int_0^1\left|\left((M-M^{[N]})\mathds{1}_{P_q}\right)(\alpha)\right|d\alpha\leq \frac{4\pi}{N}.
$$
Thus, the given $M^{[N]}$ and $M$ satisfy Assumption \ref{approximation}.

\end{example}

\begin{example}[\cite{Parise}]\label{secondgraphs}
For any $a\in[-1,1]$, consider a graphon $M(\alpha,\beta)=a v(\alpha)v(\beta)$ with $\alpha,\beta\in[0,1]$, where $v(\alpha)=\frac{1}{\sqrt{2}(\alpha+\frac{1}{2})^\frac{1}{4}}$. This graphon is rank $1$ with eigenvalue $\lambda=a\|v\|_2^2$ and eigenfunction $\frac{v}{\|v\|_2}$. Then Assumption \ref{finiteeigenvalues} holds. Moreover, we generate a sequence of graphs of size $N$ from the graphon $M$, by connecting nodes $i$ and $j$ with weight $M(\frac{i-1}{N},\frac{j-1}{N})$, for $i,j\in\{1,\ldots,N\}$. Similar to the deduction in Example \ref{graphs}, we can show that this sequence of graphs and the graphon $M$ satisfy Assumption \ref{approximation}.

\end{example}

We note that graphons in Example \ref{graphs} and \ref{secondgraphs} are both of the form $M(\alpha,\beta)=\sum_{i=1}^m g_i(\alpha)h_i(\beta)$, which are a class of finite rank graphons (see \cite[Section 4.2]{Medina} for more details). As pointed out by Lov\'{a}sz (see \cite[Exercise 14.56]{Lovasz}), if $M$ and $G$ are two graphons with finite rank, the graphons (if they are still graphons) $M+G$, $MG$ and $M\otimes G$ have finite rank; if $M$ has finite rank and $G$ is arbitrary, then the graphon (if it's still a graphon) $M\circ G+G\circ M$ has finite rank.

\section{An application example in network security}
\qquad In this section, we apply the theoretical results to a network security example, where all users are facing potential cybersecurity threats.

The users are distributed in a graph, in which each node represents a group composed of a cluster of users (firms, for example), and the edges denote the connections among groups. It's noted that security in both wireless and wireline communication networks relies not only on the security investments implemented by individual users, but also on the interdependency among devices of users \cite{Zhang2021}. Therefore, based on the model in \cite[Section 4]{minli}, we assume the network safety level of user $\mathcal{A}_i$ and the performance criteria of user $\mathcal{A}_i$ are characterized by our (\ref{1})-(\ref{2}).
Here, $x^i$ denotes the network safety level of user $\mathcal{A}_i$ in the $q$th group; the control input $u^i$ represents the investment of $\mathcal{A}_i$ in cybersecurity such as purchasing anti-virus software and repairing the device in virus or Trojan; $W^0$ is the common noise which can be interpreted as the persistent signal disturbance inside the network \cite{Zhang2021}. We refer the reader to \cite{Siwe,Vasal} for more details of modelling network security by MFG and GMFG theory.

To obtain explicitly the Nash equilibrium, we set $Ex_0^i=\mu_q=1$ for all $1\leq i\leq K$, $\mu(\alpha)=1$, $A=D=\eta=\Sigma_0=1$, $B=R=H=2$, $Q=Q_T=\frac{3}{2}$. Let $M(\alpha,\beta)=-\cos(2\pi(\alpha-\beta))$ be the limit graphon of $M^{[N]}$, i.e., $M^{[N]}$ and $M$ satisfy Assumption \ref{approximation} (the rationality of this setting is validated in Example \ref{graphs}). We note that these given parameters satisfy Assumptions \ref{finiteeigenvalues}, \ref{monotonicity}, \ref{assumptionmu} and \ref{approximation}.

In this case, the Riccati equation in (\ref{optimalx}) reads
\begin{equation*}
  \dot{f}_t-f_t^2+2f_t+3=0,\ f_T=3,
\end{equation*}
and $f_t\equiv 3$ is its unique solution. The FBSDEs in (\ref{FBSDEl}) yield for $l=1,2$
\begin{equation}\label{FBSDEapplication}
\left\{
  \begin{split}
    & z_t^l=\int_0^t(-\frac{5}{2}z_s^l+\frac{1}{2}g_s^l)ds\\
    & g_t^l=-6z_T^l+\int_t^T(-2g_s^l-3z_s^l)ds-\int_t^T q_s^{1l}dW_s^0.
  \end{split}
  \right.
\end{equation}
Obviously, one has $z^1=z^2$, $g^1=g^2$ and $q^{1l}=0$ for $l=1,2$. On the other hand, the solution of backward ODE (\ref{mathringg}) is given by
\begin{equation*}
  \mathring{g}_t=-3(e^{2(t-T)}+1).
\end{equation*}
Then, the strategy (\ref{epsilonnashstrategy}) reads
\begin{align}\label{controlapplication}
  u_t^{oi} & =-\frac{3}{2}x_t^{oi}-\frac{1}{2}\bar{g}_t^q\nonumber\\
  & =-\frac{3}{2}x_t^{oi}-\frac{1}{2}\Big[\sum_{l=1}^2 g_t^l N\int_{P_q}f_l(\alpha)d\alpha+\mathring{g}_t N\int_{P_q}\Big(\mathds{1}-\sum_{l=1}^2\langle \mathds{1},f_l\rangle f_l\Big)(\alpha)d\alpha\Big]\nonumber\\
  & =-\frac{3}{2}x_t^{oi}-\frac{1}{2}g_t^1 N\frac{\sqrt{2}}{2\pi}\Big(\sin(2\pi\frac{q}{N})-\sin(2\pi\frac{q-1}{N})\Big)\nonumber\\
  &\quad +\frac{1}{2}g_t^2 N\frac{\sqrt{2}}{2\pi}\Big(\cos(2\pi\frac{q}{N})-\cos(2\pi\frac{q-1}{N})\Big)+\frac{3}{2}(e^{2(t-T)}+1)\nonumber\\
  & =-\frac{3}{2}x_t^{oi}+\frac{\sqrt{2}}{4\pi}N\Big[\Big(\cos(2\pi\frac{q}{N})-\cos(2\pi\frac{q-1}{N})\Big)-\Big(\sin(2\pi\frac{q}{N})-\sin(2\pi\frac{q-1}{N})\Big)\Big]g_t^1+\frac{3}{2}(e^{2(t-T)}+1)\nonumber\\
  & =-\frac{3}{2}x_t^{oi}+\frac{\sqrt{2}}{4\pi}N\Big[-2\sin(\pi\frac{2q-1}{N})\sin(\pi\frac{1}{N})-2\cos(\pi\frac{2q-1}{N})\sin(\pi\frac{1}{N})\Big]g_t^1+\frac{3}{2}(e^{2(t-T)}+1)\nonumber\\
  & =-\frac{3}{2}x_t^{oi}-\frac{N}{\pi}\sin(\pi\frac{1}{N})\sin(\pi(\frac{2q-1}{N}+\frac{1}{4}))g_t^1+\frac{3}{2}(e^{2(t-T)}+1),
\end{align}
where $g^1$ is the solution of (\ref{FBSDEapplication}). For user $\mathcal{A}_i$, $u^{oi}$ given by (\ref{controlapplication}) is an $\epsilon$-Nash equilibrium of the network security example.

\section{Conclusions and future work}
\qquad This article is concerned with the study of the linear quadratic graphon mean field games where the individual agents are subject to the idiosyncratic noises and common noise. The optimal strategies for the limit LQ-GMFG problem are derived through the consistency condition, and the $\epsilon$-Nash equilibrum for the finite large population games is established.

It is worth noticing that Tchuendom \cite{Foguen-Tchuendom1} obtained some uniqueness results for linear-quadratic mean field games with common noise. We would like to mention that the uniqueness of the solution for the limit LQ-GMFG with common noise has not been obtained in the present paper. Thus, it would be interesting to show the uniqueness of the solution for the limit LQ-GMFG. In addition, as pointed out by Bensoussan et al. \cite{Bensoussan1}, it is important to consider the large population games with partial observations. Thus, to consider graphon mean field games with partial observation would be another meaningful direction. We leave these as our future work.

\section*{Acknowledgement}
\qquad The authors would like to thank Professors Alexander Aurell,  Peter E. Caines, Zechun Hu, Jianhui Huang, Qi L\"{u}, Rinel Foguen Tchuendom, Bing-Chang Wang, Ruimin Xu, Sheung Chi Phillip Yam, Juliang Yin and Jiayang Yu for helpful suggestions and discussions.

\section*{Appendix}
\textbf{The proof of Proposition \ref{graphonunique}}
\begin{proof}
Following the proofs of Proposition 4.3 and Lemma A.2 of \cite{Caines}, consider any given measurable sets $\mathcal{S},\ \mathcal{T}\subset [0,1]$ and arbitrary $\varepsilon>0$. Denoting $\lambda$ be the Lebesgue measure on $\mathbb{R}$, there exist open sets $\mathcal{S}\subset\mathcal{S}^o$ and $\mathcal{T}\subset\mathcal{T}^o$ such that $\lambda(\mathcal{S}^o\setminus\mathcal{S})\leq\varepsilon$, $\lambda(\mathcal{T}^o\setminus\mathcal{T})\leq\varepsilon$. Letting $\mathcal{S}_1^o=\mathcal{S}^o\bigcap(0,1)$ and $\mathcal{T}_1^o=\mathcal{T}^o\bigcap(0,1)$, we can find a finite integer $s^*$ and constituent disjoint open intervals $I_i^{\mathcal{S}}\subset[0,1]$, $1\leq i\leq s^*$, such that $U_{s^*}\triangleq\bigcup_{i=1}^{s^*}I_i^{\mathcal{S}}\subset\mathcal{S}_1^o$ and $\lambda(\mathcal{S}_1^o\setminus U_{s^*})\leq\varepsilon$. Similarly, we can find a finite integer $t^*$ and disjoint open intervals $I_i^{\mathcal{T}}\subset[0,1]$, $1\leq i\leq s^*$, such that $U_{t^*}\triangleq\bigcup_{j=1}^{t^*}I_j^{\mathcal{T}}\subset\mathcal{T}_1^o$ and $\lambda(\mathcal{T}_1^o\setminus U_{t^*})\leq\varepsilon$. We note that $(s^*,t^*)$ depends on $(\mathcal{S},\mathcal{T},\varepsilon)$.

Let $\bigtriangleup$ denote the symmetric difference. Thus, we have $\lambda(\mathcal{S}\bigtriangleup U_{s^*})\leq 2\varepsilon$, $\lambda(\mathcal{T}\bigtriangleup U_{t^*})\leq 2\varepsilon$, which deduce $\lambda\times\lambda((\mathcal{S}\times\mathcal{T})\bigtriangleup(U_{s^*}\times U_{t^*}))\leq 2\varepsilon^2+4\varepsilon<6\varepsilon$. Since $|(M^{[N]}-M)(\alpha,\beta)|\leq 2$ for all $\alpha,\beta\in [0,1]$, we have
\begin{equation}\label{StimesT}
  \left|\int_{\mathcal{S}\times\mathcal{T}}(M^{[N]}-M)(\alpha,\beta)d\alpha d\beta-\int_{U_{s^*}\times U_{t^*}}(M^{[N]}-M)(\alpha,\beta)d\alpha d\beta\right|<12\varepsilon.
\end{equation}
We take a sufficiently large $N_0$, depending on $s^*$ and $t^*$ (and so on $(\mathcal{S},\mathcal{T},\varepsilon)$), such that for $N\geq N_0$, $s^*/N\leq\varepsilon$ and $t^*/N\leq\varepsilon$. Consider $N\geq N_0$ with the $N$-uniform partition $\{P_1,\ldots,P_N\}$ of $[0,1]$, we select some subintervals from $\{P_1,\ldots,P_N\}$ whenever their interiors are contained in $U_{s^*}$. The selected subcollection denoted by $\{P_{i_r}\}_{r=1}^{r_N}$. Similarly, select a subcollection $\{P_{j_\tau}\}_{\tau=1}^{\tau_N}$. Denote $\hat{U}_{s^*}=\bigcup_{r=1}^{r_N}P_{i_r}$ and $\hat{U}_{t^*}=\bigcup_{\tau=1}^{\tau_N}P_{j_\tau}$. Follow from \cite{Caines}, one has
$$
\lambda(U_{s^*}\setminus\hat{U}_{s^*})\leq 2s^*/N\leq 2\varepsilon,\quad \lambda(U_{t^*}\setminus\hat{U}_{t^*})\leq 2t^*/N\leq 2\varepsilon.
$$
Then for all $N\geq N_0$, we have
\begin{equation}\label{Us*Ut*}
\begin{split}
   &\left|\int_{U_{s^*}\times U_{t^*}}(M^{[N]}-M)(\alpha,\beta)d\alpha d\beta-\int_{\hat{U}_{s^*}\times U_{t^*}}(M^{[N]}-M)(\alpha,\beta)d\alpha d\beta\right|\leq 4\varepsilon,\\
   &\left|\int_{\hat{U}_{s^*}\times U_{t^*}}(M^{[N]}-M)(\alpha,\beta)d\alpha d\beta-\int_{\hat{U}_{s^*}\times \hat{U}_{t^*}}(M^{[N]}-M)(\alpha,\beta)d\alpha d\beta\right|\leq 4\varepsilon.
\end{split}
\end{equation}
Combining (\ref{StimesT}) and (\ref{Us*Ut*}), by the triangle inequality, one has that for all $N\geq N_0$,
\begin{equation}\label{combining}
  \left|\int_{\mathcal{S}\times\mathcal{T}}(M^{[N]}-M)(\alpha,\beta)d\alpha d\beta-\int_{\hat{U}_{s^*}\times \hat{U}_{t^*}}(M^{[N]}-M)(\alpha,\beta)d\alpha d\beta\right|\leq 20\varepsilon.
\end{equation}
By the definition of $\hat{U}_{s^*}$ and $\hat{U}_{t^*}$, we obtain
\begin{align}\label{hatUs*hatUt*}
   \left|\int_{\hat{U}_{s^*}\times \hat{U}_{t^*}}(M^{[N]}-M)(\alpha,\beta)d\alpha d\beta\right|=&\left|\int_{\hat{U}_{s^*}}\int_{\hat{U}_{t^*}}(M^{[N]}-M)(\alpha,\beta)d\beta d\alpha\right|\nonumber\\
   =&\left|\sum_{r=1}^{r_N}\sum_{\tau=1}^{\tau_N}\int_{P_{i_r}}\int_{P_{j_\tau}}(M^{[N]}-M)(\alpha,\beta)d\beta d\alpha\right|\nonumber\\
   \leq & \sum_{r=1}^{r_N}\sum_{\tau=1}^{\tau_N}\left|\int_{P_{i_r}}\int_{P_{j_\tau}}(M^{[N]}-M)(\alpha,\beta)d\beta d\alpha\right|\nonumber\\
   \leq & \sum_{i=1}^{N}\sum_{j=1}^{N}\left|\int_{P_{i}}\int_{P_{j}}(M^{[N]}-M)(\alpha,\beta)d\beta d\alpha\right|\nonumber\\
   \leq & N\cdot\max_{1\leq i\leq N}\sum_{j=1}^{N}\left|\int_{P_{i}}\int_{P_{j}}(M^{[N]}-M)(\alpha,\beta)d\beta d\alpha\right|\nonumber\\
   \leq & N\cdot\max_{1\leq i\leq N}\int_0^1\left|\int_{P_{i}}(M^{[N]}-M)(\alpha,\beta)d\alpha\right|d\beta\nonumber\\
   = & N\cdot\max_{1\leq i\leq N}\|(M^{[N]}-M)\mathds{1}_{P_i}\|_1=E_N.
\end{align}
Then by (\ref{combining}) and (\ref{hatUs*hatUt*}), for all $N\geq N_0$ depending on $(\mathcal{S},\mathcal{T},\varepsilon)$, we have
\begin{equation}\label{EN+20varepsilon}
  \left|\int_{\mathcal{S}\times\mathcal{T}}(M^{[N]}-M)(\alpha,\beta)d\alpha d\beta\right|\leq \left|\int_{\hat{U}_{s^*}\times \hat{U}_{t^*}}(M^{[N]}-M)(\alpha,\beta)d\alpha d\beta\right|+20\varepsilon\leq E_N+20\varepsilon.
\end{equation}
Then, for any given measurable sets $\mathcal{S},\ \mathcal{T}\subset [0,1]$,
\begin{equation*}
  \lim_{N\rightarrow\infty}\left|\int_{\mathcal{S}\times\mathcal{T}}(M^{[N]}-M)(\alpha,\beta)d\alpha d\beta\right|=0.
\end{equation*}

Suppose there is another graphon limit $\hat{M}$ satisfying Assumption \ref{approximation}. For any $\mathcal{S}\times\mathcal{T}\subset[0,1]^2$ and any $\delta>0$,  there exists $N_0$ (depending on $(\mathcal{S},\mathcal{T},\delta)$) such that for $N\geq N_0$,
$$
\left|\int_{\mathcal{S}\times\mathcal{T}}(M^{[N]}-M)(\alpha,\beta)d\alpha d\beta\right|\leq\delta,\quad \left|\int_{\mathcal{S}\times\mathcal{T}}(M^{[N]}-\hat{M})(\alpha,\beta)d\alpha d\beta\right|\leq\delta.
$$
Then for any $\mathcal{S}\times\mathcal{T}\subset[0,1]^2$ and any $\delta>0$, one has
$$
\left|\int_{\mathcal{S}\times\mathcal{T}}(M-\hat{M})(\alpha,\beta)d\alpha d\beta\right|\leq 2\delta,
$$
which implies for any $\mathcal{S}\times\mathcal{T}\subset[0,1]^2$,
$$
\left|\int_{\mathcal{S}\times\mathcal{T}}(M-\hat{M})(\alpha,\beta)d\alpha d\beta\right|=0.
$$
Thus, by the definition of cut norm, we obtain $\|M-\hat{M}\|_\Box=0$.
\end{proof}

\end{document}